\documentclass[12pt]{article}

\usepackage{fullpage}
\usepackage{amsmath,amsthm,amssymb,MnSymbol}
\usepackage{mathrsfs}
\usepackage{hyperref}
\hypersetup{colorlinks=true, linkcolor=red,citecolor=blue,urlcolor=blue}

\theoremstyle{theorem}
\newtheorem{theorem}{Theorem}[section]
\newtheorem{lemma}[theorem]{Lemma}
\newtheorem{proposition}[theorem]{Proposition}
\newtheorem{corollary}[theorem]{Corollary}

\newtheorem*{theorem*}{Question}

\theoremstyle{definition}
\newtheorem{definition}[theorem]{Definition}

\newtheorem{remark}[theorem]{Remark}

\newlength{\probwidth}
\newcommand{\N}{\mathbb{N}}
\newcommand{\C}{\mathcal{C}}
\newcommand{\PTIME}{\mathsf{P}}
\newcommand{\eval}{\mathit{eval}}
\newcommand{\Log}{\mathit{Log}_{>1}}

\renewcommand{\le}{\leqslant}
\renewcommand{\leq}{\leqslant}
\renewcommand{\ge}{\geqslant}
\renewcommand{\geq}{\geqslant}

\newcommand{\NEXP}{\mathsf{NEXP}}
\newcommand{\EXP}{\mathsf{EXP}}
\renewcommand{\P}{\mathsf{P}}

\newcommand{\NP}{\mathsf{NP}}
\newcommand{\PSPACE}{\mathsf{PSPACE}}
\newcommand{\Ppoly}{\mathsf{P/poly}}
\newcommand{\ioPpoly}{\mathsf{io}\text{-}\mathsf{P/poly}}

\newcommand{\T}{\mathsf{T}}
\newcommand{\PV}{\mathsf{PV}}
\newcommand{\V}{\mathsf{V}}
\newcommand{\U}{\mathsf{U}}

\renewcommand{\S}{\mathsf{S}}

\newcommand{\Bd}{\textit{Bd}}
\renewcommand{\succ}{\textit{succ}}
\newcommand{\start}{\textit{start}}
\newcommand{\Start}{\textit{Start}}
\newcommand{\Fail}{\textit{Fail}}
\newcommand{\Next}{\textit{Next}}
\newcommand{\Unique}{\textit{Unique}}

\newcommand{\meyer}{\mu}

\newcommand{\q}[1]{\textup{``}\textit{#1}\textup{''}}

\newcommand{\Lg}{\textit{Log}}

\renewcommand{\ldots}{...}

\newcommand{\io}{\textup{io-}}

\begin{document}

\title{\bf From Gödel incompleteness to the consistency of circuit lower bounds
}

\author{Albert Atserias\\\small Universitat Polit\`{e}cnica de
  Catalunya\\[-1ex]\small and Centre de Recerca Matem\`atica \\[-1ex] \small
  Barcelona, Spain\\[-1ex]\small\url{atserias@cs.upc.edu} \and Moritz
  M\"uller\\\small Universit\"at Passau\\[-1ex] \small Passau,
  Germany\\[-1ex]\small\url{moritz.mueller@uni-passau.de} }

\maketitle

\begin{abstract} 
  We prove that the bounded arithmetic theory~$\S^1_2$ is consistent
  with~$\EXP\not\subseteq\Ppoly$. More generally, we show that certain
  separations of~$\V^1_2$ from a theory~$\T$ imply the consistency
  of~$\T$ with~$\EXP\not\subseteq\Ppoly$. For~$\T=\S^1_2$, Takeuti
  (1988) established such a separation using a variant of G\"odel's
  consistency statement.  Analogous results hold
  for~$\PSPACE\not\subseteq\Ppoly$ but the required separations of
  theories are yet unknown.  Finally, we give magnification results
  for the hardness of proving almost-everywhere versions of these
  lower bounds.
\end{abstract}

\section{Introduction}

\noindent\textbf{Bounded arithmetic and computational complexity.} 
Bounded arithmetics
$$\textstyle \S^1_2\subseteq \T^1_2\subseteq
\S^2_2\subseteq\T^2_2\subseteq\cdots\subseteq \T_2=\bigcup_i\T^i_2$$
go back to Buss \cite{bussthesis} and are given by induction
principles for bounded formulas corresponding to the levels of the
polynomial hierarchy. E.g.,~$\T^1_2$ has induction
for~$\NP$-properties or, more precisely, for~$\Sigma^b_1$-formulas, which
define precisely the sets in~$\NP$ (in the standard
model). Informally, the theories are meant to capture reasoning with
concepts and functions of a certain computational
complexity. E.g.,~$\S^1_2$ formalizes polynomial-time reasoning.

These theories are obviously not foundational for the whole of
mathematics but they are for computer science: already low levels
formalize a large part of contemporary complexity theory.  For example in
 circuit complexity, certain central upper \cite{at} and lower~\cite{mp} bounds
have been formalized in (a certain
theory below)~$\T^2_2$. 

The first and final words in Kraj\'i\v{c}ek's 1995 monograph \cite{krabuch} ask for consistency results for  notoriously open complexity theoretic conjectures. Already consistency with low levels of the hierarchy is seen as precise evidence for the truth of the conjecture. This is one of the main motivations for bounded arithmetic  from the  perspective of computational complexity.

Concerning circuit lower bounds, the central open question is
whether~$\S^1_2$ can be proved consistent with super-polynomial
circuit lower bounds for~$\NP$, i.e.,~$\NP\not\subseteq\Ppoly$.

\bigskip\noindent\textbf{Bounded arithmetic and mathematical logic.}
Such consistencies can be inferred from a better understanding of
independence. For example, the consistency of~$\T^i_2$ with~$\NP\not\subseteq\Ppoly$ is
implied by~$\T^i_2\neq \T_2$. In fact, the separation of
theories~$\T^i_2 \not= \T_2$ is \emph{equivalent} to the consistency
of~$\T^i_2$
with~$\Sigma^\P_{i+1}\not\subseteq\textsf{co-$\Sigma^\P_{i+1}$/poly}$;
see~\cite{kpt}, also \cite[Section 10.2]{krabuch}.
%

Using G\"odel-type consistency statements,
Takeuti~\cite[p.104]{takeuti} proved the following separation (see also
\cite[Corollary~2.7]{kraexp}):

\begin{theorem}[Takeuti] \label{thm:takeuti} There is a bounded formula which is provable in $\V^1_2$ but not in~$\S^1_2$.
\end{theorem}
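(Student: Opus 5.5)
The plan is to take as the separating sentence a bounded consistency statement for $\S^1_2$ itself, relativized to proofs of exponential size with respect to a bound given by a number. Concretely, consider the formula
\[
\forall x\, \forall \pi\ \bigl(|\pi| \le x \rightarrow \neg\mathrm{Prf}_{\S^1_2}(\pi,\ulcorner 0=1\urcorner)\bigr)
\]
written so that all quantifiers are bounded and proofs are coded as sequences of length exponential in $|x|$. Such sequences are accessible as second-sort objects (strings) in $\V^1_2$ but not as first-order objects of polynomial size in $\S^1_2$. The cleanest choice, following Takeuti, is a restricted consistency statement for $\S^1_2$: no proof of $0=1$ from the axioms of $\S^1_2$ in which every formula is bounded (a ``bounded proof'') and whose size is at most $2^{|x|^{O(1)}}$. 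Here $x$ is a free variable, or is universally quantified by a sharply bounded or bounded quantifier.

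\textbf{Unprovability in $\S^1_2$.} This step uses a Gödel-style argument. By the free-cut elimination theorem, any $\S^1_2$-proof of a bounded formula can be converted into a proof in which all formulas are bounded. Assume $\S^1_2$ proves the bounded consistency statement $\mathrm{BCon}$. Then $\S^1_2$ proves the consistency of its own bounded fragment with respect to proofs of size $\le x$ for arbitrary $x$. Via the formalized diagonal lemma and provable $\Sigma^b_1$-completeness (which $\S^1_2$ has for polynomially bounded witnesses), one obtains, as in Gödel's second theorem, that $\S^1_2 \vdash \mathrm{Con}_{\S^1_2}$ restricted to a class that includes the proof of the diagonal sentence. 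That is a contradiction.

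I expect the main obstacle to be that full $\mathrm{Con}(\S^1_2)$ is not bounded, and Pudlák-style results show $\S^1_2$ does not even prove bounded consistency of weak theories. So I would instead use Pudlák's finitistic approach directly: $\S^1_2 \nvdash \mathrm{Con}_{\S^1_2}(x)$ for the statement ``no proof of length $\le |x|$''. The needed adjustment is to pick the length bound so that the statement is provable in $\V^1_2$.

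\textbf{Provability in $\V^1_2$.} $\V^1_2$ can quantify over exponentially long strings, so it can define a truth predicate for bounded formulas of fixed $\Sigma^b_i$ complexity evaluated over a structure of size $|x|$. It can also prove by $\Sigma^{1,b}_1$ induction along the (exponentially long) proof that every line of a bounded $\S^1_2$-proof is true. Free-cut-free proofs of $0=1$ only involve formulas of bounded quantifier complexity ($\Sigma^b_1$-level inductions), which is what makes this uniform reflection feasible.

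The main obstacle is calibrating the size bound so that the $\V^1_2$ reflection argument goes through while Pudlák/Gödel unprovability in $\S^1_2$ still holds. I would first try proofs of length $\le |x|$ whose formulas are $\Sigma^b_1$ with parameters up to $x$, using exponential-length evaluation strings in $\V^1_2$.
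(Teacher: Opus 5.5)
The paper does not prove this theorem. It imports it from Takeuti (1988) and from Kraj\'i\v{c}ek's \emph{Exponentiation and second-order bounded arithmetic} (Cor.~2.7) and uses it as a black box. Judged against what a proof must do, your sketch has a genuine gap. You never fix a single formula for which both halves hold, and the ``calibration'' you defer is exactly the content of the theorem.

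\textbf{The candidates where $\S^1_2$-unprovability is clear fail on the $\V^1_2$ side.} Your concrete candidates are: no $\S^1_2$-proof of $0=1$ of size $\le x$, or of length $\le |x|$ (your ``Pudl\'ak finitistic'' version). Both have the full sentence $\mathrm{Con}(\S^1_2)$ as universal closure, because any putative proof $\pi$ is caught by taking $x:=\pi$. So $\S^1_2$-unprovability is just G\"odel's second theorem. But $\V^1_2$ does not prove these statements either. $\V^1_2$ is interpretable in $I\Delta_0+\Omega_1$: put the number sort on a suitably shortened logarithmic cut and read sets as numbers. Hence $\V^1_2$ is interpretable in $Q$, and $\V^1_2\vdash\mathrm{Con}(\S^1_2)$ would give $\V^1_2\vdash\mathrm{Con}(\V^1_2)$.

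Your last candidate, $\Sigma^b_1$-proofs of length $\le|x|$, has the same universal-closure problem, now for bounded consistency. The reflection argument you sketch for it also fails. In a proof of length $n$, nested $\#$-terms take values triply exponential in $n$. So the bounded quantifiers to be evaluated range over intervals vastly larger than the $2^{|x|^{O(1)}}$-size sets $\V^1_2$ can form. Bounding parameters by $x$ does not help: a proof of $0=1$ has no parameters, and the growth is internal to its terms.

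\textbf{Statements weak enough for $\V^1_2$ escape your unprovability argument.} Suppose you restrict enough for the $\V^1_2$ side to work, e.g.\ bounded proofs whose size is a small iterated logarithm of $x$, so that all term values stay below $2^{|x|^{O(1)}}$. Then your Gödel-style argument no longer applies, for two reasons.
\begin{itemize}
\item Free-cut elimination is a metatheorem with superexponential blow-up that $\S^1_2$ cannot formalize. So $\S^1_2\vdash\mathrm{BCon}$ does not yield $\S^1_2\vdash\mathrm{Con}$.
\item The diagonal sentence and the formalized derivability conditions use the unbounded provability predicate. The proofs they produce are therefore not in the restricted class.
\end{itemize}
The universal closure of such a statement is a consistency statement restricted to a cut or to restricted proofs. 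Its $\S^1_2$-unprovability needs a genuinely different G\"odel-type theorem, in the spirit of Pudl\'ak's results on consistency restricted to cuts and on bounded consistency, or Takeuti's own argument. That theorem must be matched with a size bound for which the $\V^1_2$ truth-definition argument goes through. Your proposal names this as ``the main obstacle'' but supplies neither the formula nor the theorem, so neither half is established for a common formula.
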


The theory~$\V^1_2$ is strong: it extends~$\T_2$ with a second sort of
{\em set variables}, and has induction for~$\NEXP$-properties or, more
precisely, for~$\Sigma^{1,b}_1$-formulas, which define precisely the sets
in~$\NEXP$ (in the standard model).  Informally,~$\V^1_2$ formalizes
exponential-time reasoning.
 
Not much more has been achieved. E.g., it is still unkown whether the
subtheory~$\U^1_2\subseteq\V^1_2$ that formalizes polynomial-space
reasoning is conservative over~$\S^1_2$.  G\"odel-type consistency
statements do not seem to reach very far -- see
\cite[Section~10.5]{krabuch} and \cite{feasinc} for discussions.  New
methods seem to be required and this is the main motivation for
bounded arithmetic from the perspective of mathematical logic.  In
Pudl\'ak's words from 1996 \cite{bottom}: ``we are not satisfied with
the current methods of proving independence results. The main reason
is that, except for Gödel's theorem which gives only special formulas,
no general method is known for proving independence of~$\Pi_1$
sentences.''

\bigskip\noindent\textbf{Known consistency results.}
Maybe not surprisingly, not too much is known concerning the
consistency of circuit lower bounds.  Some conditional consistency
results have been achieved in \cite{cokra}.  Unconditional ones have
been achieved in a line of works
\cite{KrajicekOliveira:Unprovability,BKO:Consistency,BydzovskyMuller:Ultrapowers,CKKO:LEARN}
but only for fixed-polynomial lower bounds.  These works rely on
witnessing theorems and complexity-theoretic methods (concretely, the
methods in~\cite{SanthanamWilliams:Uniformity}).

Super-polynomial lower bounds seem to require different
techniques. Only one is known~\cite{abm} (with some strengthenings
announced in \cite{thapen}), namely, that the theory~$\V^0_2$ is
consistent with~$\NEXP\not\subseteq\Ppoly$. Here~$\V^0_2$ is a
conservative extension of~$\T_2$ with set variables -- needed to speak
about (nondeterministic) exponential-time computations.
The proof is based on Ajtai's theorem \cite{ajtai} and its
improvements \cite{kpw,pbi}, and thereby on methods from mathematical
logic, in particular forcing \cite{riis,am,partialstr}.

\bigskip\noindent\textbf{Main contribution.} We show that Takeuti's theorem
does imply a consistency result  for super-polynomial circuit lower bounds,
namely the consistency of~$\S^1_2$ with~$\EXP\not\subseteq\Ppoly$.  At
first sight this might appear weak when compared to the just mentioned
result from \cite{abm}. This is not the case: while the formalization
of the
collapse~$\NEXP\subseteq\Ppoly$ requires set-sort quantification, the
formalization of the weaker collapse~$\EXP\subseteq\Ppoly$ can be
stated without such quantification, and for the number sort the
theory~$\S^1_2$ has considerable strength.  This follows from a well-known argument from the
  so-called Karp-Lipton Theorem for $\EXP$, stated in \cite{kl} and attributed to
  Meyer. Details follow.
  
The collapse of~$\EXP$ to~$\Ppoly$ would mean that every
exponential-time machine~$M$ is simulated by circuits of size at
most~$n^c$ for some~$c\in\N$. The direct formalization reads:
$$ 
\alpha^c_M:=\begin{array}{l}
              \text{``\em for all lengths~$n>1$ there exists a circuit~$C$ of size at most~$n^c$}\\
              \text{ \em such that for
              all~$x\in\{0,1\}^n$:\;\;~$C(x)=1$ iff $M$
              accepts~$x$.\em ''}
\end{array}
$$
Writing the statement that~$M$ accepts~$x$ requires an existential
set-sort quantifier for a computation of~$M$.  This can be avoided as
follows.  Consider an exponential-time machine~$M^*$ that,
given~$x\in\{0,1\}^n$ and~$i\in\N$, decides whether the~$i$-th bit of
(the binary code) of the computation of~$M$ on~$x$ is 1. From small
circuits simulating~$M^*$ we get a~$c\in\N$ such that:
$$
\meyer^c_M:=\begin{array}{l}
              \text{``\em for all lengths~$n>1$ there exists a circuit~$C$ of size at most~$n^c$}\\
              \text{ \em such that for
              all~$x\in\{0,1\}^n$:\;\;~$\textit{tt}(C_x)$ is a
              halting computation of~$M$ on~$x$.\em ''}
\end{array}
$$
Here,~$C_x$ is the circuit obtained from~$C$ by fixing the first~$n$
inputs to the bits of~$x$, and~$\textit{tt}(C_x)$ is the truth table
of the Boolean function it computes. Note that~$\meyer^c_M$ is a
number-sort sentence, i.e., it has no set variables; in
fact,~$\meyer^c_M$ is a~$\forall\Sigma^b_2$-sentence.

Letting~$M_1$ be a suitable universal exponential-time machine, either
of the two theories~$\{\neg\alpha^c_{M_1}\mid c\in\N\}$
or~$\{\neg\meyer^c_{M_1}\mid c\in\N\}$ is true (in the standard
model) if and only if~$\EXP\not\subseteq\Ppoly$.  We define
$$
\q{$\EXP\not\subseteq\Ppoly$}:=\{\neg\meyer^c_{M_1}\mid
c\in\N\},
$$ 
pronounced as ``the $\meyer$-formalization of $\EXP \not\subseteq
\Ppoly$''.  The $\alpha$-formalization of $\EXP \not\subseteq \Ppoly$
is defined analogously, using the $\alpha$-formulas instead of the
$\meyer$-formulas.  

We arrive at the main theorem:

\begin{theorem}\label{thm:maincon}\
\begin{enumerate}\itemsep=0pt
\item[(a)] $\S^1_2$ is consistent with
  $\big\{ \neg\meyer^c_{M_1}\mid c\in\N \big\}$, the
  $\meyer$-formalization of $\EXP \not\subseteq \Ppoly$.
\item[(b)] $\S^1_2(\alpha)$ is consistent with $\big\{\neg\alpha^c_{M_1}\mid c\in\N\big\}$, the $\alpha$-formalization of $\EXP \not\subseteq \Ppoly$.
\end{enumerate}
\end{theorem}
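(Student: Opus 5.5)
We argue by contradiction and reduce both parts to Takeuti's separation, Theorem~\ref{thm:takeuti}. Suppose $\S^1_2\vdash\meyer^c_{M_1}$ for some $c\in\N$; since $\S^1_2$ proves $\mu$-sentences for larger exponents from smaller ones, a single $c$ suffices. The goal is to show that then $\S^1_2$ proves every bounded sentence provable in $\V^1_2$, which contradicts Theorem~\ref{thm:takeuti}. Part~(b) will be handled in the same way in $\S^1_2(\alpha)$, after a preliminary step turning $\alpha^c_{M_1}$ into a $\meyer$-type statement.

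\textbf{Step 1: witnessing $\V^1_2$ by exponential-time computations.} Let $\varphi$ be a bounded (number-sort) sentence with $\V^1_2\vdash\varphi$. I would use the standard witnessing and translation for $\V^1_2$: $\V^1_2$ is $\forall\Sigma^b_\infty$-conservative over (or RSUV-isomorphic to) a second-sort theory whose provably total set-functions are the exponential-time functions. Concretely, I expect to find an exponential-time machine $M$ such that a fixed theory proves: \q{if $w$ is a halting computation of $M$ on input $x$, then $\varphi$ holds}, where $x$ encodes the parameters. The proof of $\varphi$ would then be unfolded along the computation $w$ by a polynomial-time-checkable local correctness argument. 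Since $\varphi$ is a sentence, it is cleaner to work with bounded formulas with a free parameter and take $n=|x|$.

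\textbf{Step 2: replacing the set witness by a circuit inside $\S^1_2$.} From $\meyer^c_{M_1}$, and by universality of $M_1$ (simulating $M$ through a padded input), $\S^1_2$ obtains for each length a circuit $C$ of size $n^{c'}$ such that $\textit{tt}(C_x)$ is a halting computation of $M$ on $x$. The set $w$ is now given by a polynomial-size object, so bit access to $w$ is polynomial-time evaluation of $C$. The exponential-length local check of Step~1 becomes a bounded statement over exponentially many indices. I would then carry out the induction over the computation steps of $M$ inside $\S^1_2$. The key point is that $\textit{tt}(C_x)$ being a halting computation is given as a hypothesis, a $\forall\Pi^b_1$ property of $C$. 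The remaining reasoning is $\Sigma^b_1$/$\Pi^b_1$-length induction on, for instance, the level of the proof or the truth definition for $\varphi$, with everything addressed through $C$. This should give $\S^1_2\vdash\varphi$.

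\textbf{Step 3: part (b).} Assuming $\S^1_2(\alpha)\vdash\alpha^c_{M_1}$, I would formalize Meyer's argument from the introduction: apply $\alpha$ to the bit-machine $M_1^*$ to get circuits for bits of computations. Then I would prove in $\S^1_2(\alpha)$ that the resulting truth table is a correct computation, so that $\meyer^{c'}$ follows, and conclude as before, using the relativized version of Takeuti's separation.

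\textbf{Main obstacle.} The delicate step is Step~2: showing that $\S^1_2$, given only circuit access to an exponential computation, can reproduce the $\V^1_2$-proof. The induction in $\V^1_2$ is for $\Sigma^{1,b}_1$ formulas over exponentially long objects. With circuits, the induction hypothesis becomes a $\Pi^b_1$ statement about $C$, but the induction runs over exponentially many steps, which $\S^1_2$ cannot do directly. My first attempt would be to avoid simulating $\V^1_2$ induction at all. Instead, I would choose $M$ to be the exponential-time machine that searches for a falsifying assignment or counter-witness to the given $\V^1_2$-proof, through a reflection or consistency-style argument. Then only local consistency of $\textit{tt}(C_x)$ must be used, and that is a $\forall\Pi^b_1$ fact available directly from $\meyer^c_{M_1}$.
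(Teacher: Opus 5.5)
\paragraph{Part (a): the witnessing step is missing.}
Your architecture for (a) matches the paper's. By compactness you assume $\S^1_2\vdash\meyer^c_{M_1}$, show that every bounded $\V^1_2$-consequence then becomes provable, and contradict Theorem~\ref{thm:takeuti}. The step carrying all the weight, however, is left open.

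What you need is a witnessing theorem for $\V^1_2$ whose \emph{correctness is provable in the weak theory}. Concretely: an explicit $\EXP$-machine $M$, extracted from the $\V^1_2$-proof, such that $\S^1_2(\alpha)\vdash$ \q{$Y$ is a halting computation of $M$ on $x$} $\to\varphi(x)$. This is the ``new-style'' witnessing theorem of Beckmann and Buss (Theorem~\ref{thm:wit}). The standard witnessing/RSUV facts you cite in Step~1 give the machine, but not an $\S^1_2(\alpha)$-proof of its correctness, so they do not help.

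Your Step~2 plan of replaying the $\V^1_2$-proof in $\S^1_2$ by length induction fails for the reason you identify yourself. The fallback ``search for a counter-witness'' machine is not yet an argument. For instance, a machine that merely evaluates $\varphi(x)$ only tells you the truth value of $\varphi(x)$, not that it is true. The machine must be built from the proof so that every halting computation yields $\varphi$ through local checks alone, and establishing that is exactly the content of the cited theorem.

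Once you have it, no induction is needed at all:
\begin{itemize}
\item $\meyer^c_{M_1}$ gives $\meyer^d_M$ by universality (Lemma~\ref{lem:alpha});
\item $Y:=C_x(\cdot)$ exists by $\Delta^b_1(\alpha)$-comprehension, and $\varphi(x)$ follows;
\item number-sort conservativity of $\S^1_2(\alpha)$ over $\S^1_2$ finishes.
\end{itemize}

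\paragraph{Part (b): the direct route is not known to work.}
You propose to derive $\meyer^{c'}_{M_1}$ from $\alpha^c_{M_1}$ inside $\S^1_2(\alpha)$ by formalizing Meyer's argument. That argument needs $\Pi^b_1(\alpha)$-induction in two places:
\begin{itemize}
\item uniqueness of exponentially long computations given by \emph{arbitrary} sets (Lemma~\ref{lem:unique}), to know that the circuit from $\alpha^e_{M_1^*}$ agrees with every partial computation;
\item an induction on $t\le t_M(x)$ showing that the circuit's truth table is a partial time-$t$ computation.
\end{itemize}
This is why the paper proves the implication only in $\T^1_2(\alpha)$; it is not known to hold in $\S^1_2(\alpha)$.

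The missing idea is to use that $\alpha^c_{M_1}$ is \emph{provable} in $\S^1_2(\alpha)$. It is therefore true in the circuit model $(N,\C_N)$ for every $N\models\S^1_2$. Mere truth in some $(N,\mathcal Y)$ would not transfer, since $\alpha$ contains a set quantifier. In $(N,\C_N)$ every set parameter is a circuit, so each $\Pi^b_1(\alpha)$-formula, including the uniqueness statement, becomes a number-sort $\Pi^b_1$-formula. By $\alpha^e_{M_2}$ and the formally verified model-checkers of Lemma~\ref{lem:mc}, such a formula is decided by a circuit on the relevant range. Hence $\Pi^b_1(\alpha)$-induction reduces to quantifier-free induction, and $(N,\C_N)\models\T^1_2(\alpha)$ (Lemma~\ref{lem:CMT12}).

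Meyer's argument then runs in $(N,\C_N)$, and the number-sort conclusion $\meyer^d_{M_1}$ descends to $N$. So $\S^1_2\vdash\meyer^d_{M_1}$, contradicting (a). No relativized form of Takeuti's theorem is needed.
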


\noindent In~(b), the theory~$\S^1_2(\alpha)$ is a standard
two-sorted variant of~$\S^1_2$.
The result is robust in that it does not depend on the parti\-cular
definition of~$M_1$. Indeed, the consistencies hold for \emph{any}
machine~$M_1$ that in a certain precise sense
is~$\S^1_2(\alpha)$-provably universal for exponential time -- what
this means is that the theorem holds for \emph{any} machine that
satisfies Lemma~\ref{lem:M1} below. Furthermore, both formalizations
are implied over~$\S^1_2(\alpha)$ by corresponding lower bound
statements for {\em any} exponential-time machine~$M$; see
Theorem~\ref{thm:equiv} below.

\bigskip\noindent\textbf{Methods.}
The proof of (a) is based on proof-theoretic arguments, namely
the ``new style'' witnessing theorem
from~\cite{bb}: from a~$\V^1_2$-proof of a bounded
formula~$\varphi(x)$ one constructs an exponential-time machine~$M$
such that even~$\S^1_2(\alpha)$ can infer~$\varphi(x)$
{\em from} a computation of~$M$
on~$x$. The theory~$\S^1_2(\alpha)$ is, however, agnostic concerning
the existence of such computations. On the other
hand,~$\meyer^c_{M_1}$ implies that such computations exist by
providing small circuits describing them.  This way, one infers (a)
from Takeuti's Theorem \ref{thm:takeuti}. In fact, the sketched
argument does not rely on Takeuti's specific formula: {\em every}
model of~$\S^1_2$ that violates {\em any} bounded consequence
of~$\V^1_2$ satisfies \q{$\EXP\not\subseteq\Ppoly$}; see
Theorem~\ref{thm:congammaM} below.

The proof of~(b) is not a straightforward consequence of~(a). 
 One difficulity is that, while the equivalence
of the~$\alpha$- and~$\meyer$-formalizations is, as we show, provable
in~$\T^1_2(\alpha)$, for all we know it is not provable
in~$\S^1_2(\alpha)$. We bypass this by proving~(b) anyway with a kind
of model-theoretic argument dubbed ``simulating comprehension''
in~\cite{abm}. Intuitively,~$\alpha_{M_1}^c$ implies that the family
of sets given by circuits is rich -- rich enough to
ensure~$\T^1_2(\alpha)$; see Lemma~\ref{lem:alphagammaproof}
below.  Once~$\T^1_2(\alpha)$ is available,
the~$\T^1_2(\alpha)$-provable equivalence of the~$\alpha$-
and~$\meyer$-formalizations kicks in, and (b) follows again from
Takeuti's Theorem. This argument, which uses the assumption
that~$\alpha^c_{M_1}$ is provable to
simulate~$\Pi^b_1(\alpha)$-induction, relies on the formally verified
model-checkers constructed in \cite{abm}.

\bigskip\noindent\textbf{Additional contributions.}
The only obstacle to strengthen Theorem~\ref{thm:maincon} to theories
above~$\S^1_2$, or to the lower bound theory
\q{$\PSPACE\not\subseteq\Ppoly$} analogously defined, is our lack of
understanding of logical independence:

\begin{theorem}\label{thm:nbsort}  Let $\T$ be a theory containing $\S^1_2(\alpha)$.
\begin{enumerate}\itemsep=0pt
\item[(a)] If~$\T$ does not prove all number-sort formulas that are
  provable in~$\forall\Sigma^{1,b}_1(\V^1_2)$,
  then~$\T\cup\q{$\EXP\not\subseteq\Ppoly$}$ is consistent.

\item[(b)] If~$\T$ does not prove all number-sort formulas that are
  provable in~$\forall\Sigma^{1,b}_1(\U^1_2)$,
  then~$\T\cup\q{$\PSPACE\not\subseteq\Ppoly$}$ is consistent.
\end{enumerate}
\end{theorem}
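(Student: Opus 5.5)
We argue by contraposition for part~(a): assume $\T\cup\q{$\EXP\not\subseteq\Ppoly$}$ is inconsistent, so that $\T\vdash\bigvee_{c\le c_0}\meyer^c_{M_1}$ for some finite set of $c$'s. Since the $\meyer^c$ are monotone in~$c$ (a circuit of size $n^c$ is one of size $n^{c'}$ for $c\le c'$, provably in $\S^1_2$), this collapses to $\T\vdash\meyer^{c}_{M_1}$ for a single standard~$c$. We then show that $\T$ proves every number-sort formula $\varphi(x)$ that $\forall\Sigma^{1,b}_1(\V^1_2)$ proves, which contradicts the hypothesis.

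So fix such a $\varphi(x)$. First we apply the ``new style'' witnessing theorem of~\cite{bb} for $\V^1_2$. It yields an exponential-time machine $M$ together with an $\S^1_2(\alpha)$-proof that, whenever $Y$ codes a halting computation of $M$ on input~$x$, $\varphi(x)$ holds. The formula $\varphi$ is number-sort, hence its $\Sigma^{1,b}_1$ content is trivial, and the witnessing theorem applies directly. Next we use the provable universality of $M_1$ (the forthcoming Lemma~\ref{lem:M1}). From a halting computation of $M_1$ on a suitable padded input $\langle M,x,1^{2^{n^k}}\rangle$, or whatever encoding the lemma uses, $\S^1_2(\alpha)$ can define, via a $\Delta^b_1$ bit-extraction, a set $Y$ that is a halting computation of $M$ on~$x$. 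Now we reason in $\T$. By $\meyer^c_{M_1}$ at the relevant length there is a circuit $C$ with $\textit{tt}(C_{x'})$ a halting computation of $M_1$ on $x'$. The set $\textit{tt}(C_{x'})$ is defined by a number-sort formula (circuit evaluation is polynomial time), so no comprehension beyond $\Delta^b_1$ with parameter $C$ is needed. We substitute this definable set for the set variable in the $\S^1_2(\alpha)$-proofs. In this way we obtain, in $\T$ (which contains $\S^1_2$, and $\S^1_2(\alpha)$ by assumption), first the computation of $M$ on~$x$ and then $\varphi(x)$.

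Part~(b) follows the same plan. We replace $\V^1_2$ by $\U^1_2$, the witnessing machine by a polynomial-space one as given by the analogous witnessing theorem for $\U^1_2$, and $M_1$ by a provably universal $\PSPACE$ machine whose computation tableau has exponential length but polynomial width. The $\meyer$-style formalization again describes the tableau by the truth table of a polynomial-size circuit.

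The main obstacle is the substitution step. We must make sure that replacing the set variable by the circuit-defined set $\{i : C_{x'}(i)=1\}$ turns an $\S^1_2(\alpha)$-proof into an $\S^1_2$-proof. This needs induction only for $\Sigma^b_1$ formulas in which the new predicate is replaced by a polynomial-time evaluation, which keeps those formulas $\Sigma^b_1$. The other delicate point is to check that the witnessing theorem of~\cite{bb} gives the implication ``computation $\Rightarrow\varphi$'' provably in $\S^1_2(\alpha)$, and not merely in some stronger theory, and that the length parameters of $\meyer^c_{M_1}$ (lengths $n>1$, padding) line up with the input sizes produced by the universality lemma.
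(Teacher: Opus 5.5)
Your proof has a genuine gap at the step where you apply the witnessing theorem to $\varphi$.

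\textbf{The gap.} The hypothesis concerns arbitrary number-sort consequences of the theory $\forall\Sigma^{1,b}_1(\V^1_2)$, and such a $\varphi$ may contain unbounded quantifiers. The paper stresses this, and its magnification theorem applies the result to the unbounded sentence $\neg\io\meyer^c_{M_1}$. Being number-sort does not make $\varphi$ a $\Sigma^{1,b}_1$-formula. The witnessing theorem (Theorem~\ref{thm:wit}) only covers $\hat\Sigma^{1,b}_1$-formulas, i.e.\ $\exists_2 X$ followed by a bounded formula. For unbounded $\varphi$ there is no machine $M$ such that $\S^1_2(\alpha)$ proves ``computation of $M$ on $x$ implies $\varphi(x)$''. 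Your argument only handles bounded $\varphi$, which is essentially Corollary~\ref{cor:Sigma11cons}.

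To reach all consequences you must verify the axioms of $\forall\Sigma^{1,b}_1(\V^1_2)$ themselves. These are universal closures of $\Sigma^{1,b}_1$-formulas $\varphi(\bar X,\bar x)$ with free set variables. For such formulas the witnessing theorem yields an oracle machine $M^{\bar X}$, whereas $\meyer^c_{M_1}$ only provides computations of oracle-free machines. Indeed, in a model $(N,\mathcal Y)$ of $\T+\meyer^c_{M_1}$ these axioms can fail for non-circuit sets in $\mathcal Y$. So you cannot hope to derive them in $\T$ directly.

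\textbf{The missing idea: shrink the set universe to circuit-definable sets.} This is what the paper does (Theorem~\ref{thm:nbsortM}).
\begin{enumerate}
\item For $N\models\S^1_2+\meyer^c_{M_1}$, consider $(N,\C_N)$. It satisfies $\S^1_2(\alpha)+\meyer^c_{M_1}$ by Lemma~\ref{lem:CM}, since $\meyer$ is number-sort.
\item Take an axiom $\varphi(\bar X,\bar x)$ with set parameters from $\C_N$. Replace each set parameter by its circuit code to obtain $\psi(\bar y,\bar x)$.
\item By $\Delta^b_1(\alpha)$-comprehension, $\V^1_2$ proves the set-free $\Sigma^{1,b}_1$-formula $\chi:=(\bar y\text{ codes a tuple of circuits}\to\psi(\bar y,\bar x))$.
\item Your witnessing-plus-universality argument does apply to $\chi$, giving $\S^1_2(\alpha)+\meyer^c_{M_1}\vdash\chi$.
\item Hence $(N,\C_N)\models\forall\Sigma^{1,b}_1(\V^1_2)$. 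So every number-sort consequence holds in $N$, and therefore in every model of $\T$. By completeness, $\T\vdash\varphi$.
\end{enumerate}
The same repair is needed for (b).

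\textbf{On your ``substitution'' worry.} The obstacle you flagged is not the real one. Since $\T\supseteq\S^1_2(\alpha)$, the set $C_{x'}(\cdot)$ is already available in $\T$ by $\Delta^b_1(\alpha)$-comprehension. Replacing sets by circuit-defined sets matters instead for the set parameters of the axioms.
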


Note that the number-sort formulas in this theorem can have arbitrary
unbounded quantifiers. Here, $\forall\Sigma^{1,b}_1(\U^1_2)$ is the
set of universal closures of $\Sigma^{1,b}_1$-formulas proved by
$\U^1_2$.  This is still a strong theory -- it
properly contains $\V^0_2$ and it can
count~\cite[Lemma~5.5.14]{krabuch}.

Our consistency can be strengthened to the consistency of an almost
everywhere lower bound~$\EXP\not\subseteq\ioPpoly$; see
Corollary~\ref{cor:aecons} below.  Borrowing a term from circuit
complexity~\cite{beyond}, we prove {\em magnifications}: if certain
theories do not prove such lower bounds, neither do certain apparently
much stronger theories.  
We view this as a contribution to {\em
  Razborov's program}~\cite{razclb,razlower,razannals} to identify a
formally precise barrier in circuit complexity, namely a natural
theory that formalizes known circuit lower bounds but is incapable to
prove strong conjectured ones.

\begin{theorem}\label{thm:magn}\
\begin{enumerate}\itemsep=0pt
\item[(a)] If $\S^1_2$ does not prove \q{$\EXP\not\subseteq\ioPpoly$}, then neither does $\forall\Sigma^{1,b}_1(\V^1_2)$.
\item[(b)] If $\S^1_2$ does not prove \q{$\PSPACE\not\subseteq\ioPpoly$}, then neither does $\forall\Sigma^{1,b}_1(\U^1_2)$.
\end{enumerate}
\end{theorem}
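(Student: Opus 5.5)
We prove (a) by contraposition, reducing it to Theorem~\ref{thm:nbsort}(a) applied to a suitably chosen theory; (b) is literally the same argument with $\U^1_2$, $\PSPACE$ and Theorem~\ref{thm:nbsort}(b) in place of $\V^1_2$, $\EXP$ and Theorem~\ref{thm:nbsort}(a). Suppose $\forall\Sigma^{1,b}_1(\V^1_2)$ proves \q{$\EXP\not\subseteq\ioPpoly$}. The almost-everywhere formalization should be a set of sentences of the form $\neg\meyer^{c}_{M_1,\mathrm{io}}$, where $\meyer^c_{M_1,\mathrm{io}}$ says that for infinitely many (i.e.\ unboundedly many) lengths $n$ there is a size-$n^c$ circuit describing computations of $M_1$ on all inputs of length $n$. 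Then each $\neg\meyer^c_{M_1,\mathrm{io}}$ reads: there is $n_0$ such that for all $n\ge n_0$ no such circuit exists. This is a number-sort sentence, so by compactness a single finite fragment of $\forall\Sigma^{1,b}_1(\V^1_2)$ proves any given one of them.

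The plan is to produce, for each fixed $c$, an $\S^1_2$-proof of $\neg\meyer^c_{M_1,\mathrm{io}}$. Take $\T:=\S^1_2(\alpha)+\meyer^{c}_{M_1,\mathrm{io}}$, or more precisely the number-sort consequences of this theory.

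The key step is to show that in any model of $\T$, every number-sort consequence of $\forall\Sigma^{1,b}_1(\V^1_2)$ holds. If this is established, then $\T$ proves all such number-sort formulas, and in particular $\neg\meyer^c_{M_1,\mathrm{io}}$. Since $\T$ also contains $\meyer^c_{M_1,\mathrm{io}}$, it follows that $\T$ is inconsistent. Hence $\S^1_2(\alpha)\vdash\neg\meyer^c_{M_1,\mathrm{io}}$, and by conservativity of $\S^1_2(\alpha)$ over $\S^1_2$ for number-sort sentences we get $\S^1_2\vdash\neg\meyer^c_{M_1,\mathrm{io}}$, which is the contrapositive of (a).

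To prove the key step I would reuse the mechanism behind Theorem~\ref{thm:nbsort}(a), namely the witnessing theorem of~\cite{bb}. From a $\V^1_2$-proof of a $\Sigma^{1,b}_1$ formula one extracts an exponential-time machine whose computation lets $\S^1_2(\alpha)$ infer the formula. By $\S^1_2(\alpha)$-provable universality of $M_1$ (Lemma~\ref{lem:M1}), the required computation is supplied by a circuit from $\meyer^{c}_{M_1}$, possibly with a larger $c$ after padding.

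The main obstacle is that $\meyer^c_{M_1,\mathrm{io}}$ only guarantees circuits at infinitely many lengths, whereas witnessing needs computations at the relevant input length. I would first try padding: in a model of $\T$, given an input of length $m$, use $\meyer^c_{M_1,\mathrm{io}}$ to find a length $n\ge m$, possibly nonstandard but still a length in the model, at which a circuit exists. Then run $M_1$ on a padded version of the input of length $n$, encoding the original machine with an exponential time bound in $n$.

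This gives a computation of the needed machine on the original input, read off from the truth table of the restricted circuit. The witnessing argument should go through relative to that computation, because $\S^1_2(\alpha)$ only needs the computation as a set, not any particular size relation to the input. The delicate point to check is that the padding overhead stays within polynomial size of $n$, so that the circuit truth table, which has exponential size, is still a legitimate set parameter in the model. I would also need to verify that the sentence finally derived is a consequence in which the original number variables are bounded below $n$.

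If padding fails, the fallback is to restate the io-definition so that $\neg\meyer^c_{M_1,\mathrm{io}}$ is implied by any universal closure about all lengths. We would then argue model-theoretically, by cutting the model at a length $n$ where the circuit exists: the initial segment of lengths up to $n$, with sets given by circuits, is then a model of $\T$ in which the circuit exists at every relevant length.
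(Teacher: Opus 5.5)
Your plan is essentially the paper's. The key step is the io-analogue of Theorem~\ref{thm:nbsortM}, namely Theorem~\ref{thm:ioM}; Theorem~\ref{thm:nbsort} itself concerns only the everywhere formalization. Theorem~\ref{thm:ioM} shows $(N,\C_N)\models\forall\Sigma^{1,b}_1(\V^1_2)$ whenever $N\models\S^1_2+\io\meyer^c_{M_1}$, and Theorem~\ref{thm:magn} then follows by contraposition with $\varphi:=\neg\io\meyer^c_{M_1}$, exactly as you set it up. You should say explicitly that the key step is verified in $(N,\C_N)$, with set parameters replaced by circuit codes, and not in an arbitrary model $(N,\mathcal Y)$ of your $\T$. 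Otherwise the machines from Theorem~\ref{thm:wit} have oracles from $\mathcal Y$, and $\io\meyer^c_{M_1}$ says nothing about those.

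The ``main obstacle'' you identify is not there, so drop the padding detour. At a length $m$, the circuit given by $\io\meyer^c_{M}$ satisfies \q{$C_y(\cdot)$ is a halting computation of $M$ on $y$} for \emph{all} $y<2^m$, hence for all shorter inputs. So, given $x$, take a length $m>|\langle M,x,t_M(x)\rangle|$ together with a circuit $C$ for $M_1$ at that length. By $\Delta^b_1(\alpha)$-comprehension and Lemma~\ref{lem:M1}(b), $Y:=G_1(C_{\langle M,x,t_M(x)\rangle}(\cdot),x,\cdot)$ is a halting computation of $M$ on $x$, and Theorem~\ref{thm:wit}(a) gives $\varphi(x)$.

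The paper instead first transfers to $\io\meyer^d_M$ via Lemma~\ref{lem:iogamma} and then uses $C_x(\cdot)$ at a length above $|x|$. Working directly with $M_1$ and $G_1$ as above makes that lemma unnecessary: for a single input no size bound on $C$ matters. Your size worry is also void, since comprehension yields the exponential-size set under any bound present in the model.

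By contrast, re-encoding the run with a time bound depending on $n$ instead of $t_M(x)$, as you suggest, would not directly produce the \q{halting computation of $M$ on $x$} that Theorem~\ref{thm:wit} requires, because Lemma~\ref{lem:M1} only covers the input $\langle M,x,t_M(x)\rangle$. The fallback is not needed.
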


The corresponding statements for \q{$\EXP\not\subseteq\Ppoly$} or \q{$\PSPACE\not\subseteq\Ppoly$} are void
(see Remark~\ref{rem:void}).
This has been overlooked in~\cite{abm}.

\bigskip\noindent\textbf{Outline.} Carrying out the arguments sketched in this
introduction requires a fair amount of technical
work. E.g.,~$\T^1_2(\alpha)$ should formalize
Meyer's argument, or~$\S^1_2(\alpha)$ should know in a certain sense
that~$M_1$ is universal.
We construct suitable machines in Section~\ref{sec:machines}.
Section~\ref{sec:formalizations} proves implications between
the~$\alpha$- and~$\meyer$-formalizations over~$\S^1_2(\alpha)$
or~$\T^1_2(\alpha)$. The sketched arguments are then carried out in
Section~\ref{sec:cons} where Theorem~\ref{thm:maincon} and
\ref{thm:nbsort} are proved.  Section~\ref{sec:magnification} infers
Theorem~\ref{thm:magn} as a corollary to the proofs. Finally,
Section~\ref{sec:concl} comments on the open question for the
consistency of~$\PSPACE\not\subseteq\Ppoly$.

\section{Preliminaries}

We assume the reader is familiar with the standard notation from
bounded arithmetic~\cite{bussthesis,hp,krabuch}.  Here, we recall some
facts and fix some less standard notation.

\subsection{Bounded arithmetic}

Bounded arithmetic theories~$\S^i_2$ and~$\T^i_2$ are written in Buss'
language containing the relation symbol~$x{<}y$ and the function
symbols~$0$,~$1$,~$x{+}y$,~$x{\cdot}y$,~$\lfloor
x/2\rfloor$,~$x{\#}y$,~$|x|$. The theories are given by a finite set
of universal axioms for the symbols in the language plus
length-induction for~$\Sigma^b_i$-formulas in~$\S^i_2$, and standard
induction for~$\Sigma^b_i$-formulas in~$\T^i_2$.  Full bounded
arithmetic is~$\T_2:=\bigcup_{i>0}\T^i_2$.
We do not distinguish these theories from their conservative extensions to Cook's language $\PV$ \cite{cook} which contains symbols for all polynomial-time computable functions and proves certain defining equations for them. 
By a {\em term} we mean a term in Buss' language. E.g., Cantor's pairing $\langle x,y\rangle$ is a term. 
We write $x\in\Log$ for $1{<}x \wedge \exists y\ x=|y|$. In such a context we write e.g.\ $2^x,2^{x^2}$ etc.~for $1\#y,y\#y$ etc.. 

\begin{remark} \label{rem:PV}  
A formula (in Buss' language) is~$\Delta^b_1$ in~$\S^1_2$
(i.e.,~$\S^1_2$-provably equivalent both to a~$\Sigma^b_1$- and
a~$\Pi^b_1$-formula) if and only if~$\S^1_2$ proves that it is
equivalent to a quantifier-free~$\PV$-formula. In particular~$\S^1_2$
proves length and standard induction for
quantifier-free~$\PV$-formulas \cite[Lemma 5.2.9]{krabuch}.
\end{remark}

\medskip

\noindent\textbf{Circuits.}
A circuit with~$s$ gates is coded by a number
below~$2^{10\cdot s\cdot|s|}$. There is a~$\PV$-function~$\eval(C,x)$
that (in the standard model) takes the encoding~$C$ of a circuit and
evaluates it on input~$x$.  This means that, if~$x<2^n$,
where~$n \leq |C|$ is the number of input gates of~$C$, then the input
gates are assigned the bits of the length-$n$ binary representation
of~$x$, and the internal gates of the circuit are evaluated bottom up
from inputs to outputs; we assume that~$\eval(C,x) = 0$ if~$x\ge 2^n$
or if~$C$ does not code a circuit.

It is convenient to have circuits that take finite tuples~$\bar
x=(x_1,\ldots, x_k)$ as inputs; formally, such a circuit has~$k$
sequences of input gates, the~$i$-th taking the bits
of~$x_i$. Again,~$\PV$ contains an evaluation function~$\eval(C,\bar
x)$; it outputs~$0$ if any~$x_i$ has length bigger than the length of
its allotted input sequence. 
We write~$C(\bar x)$ instead of~$\eval(C,\bar x)$. 

For a circuit~$C$ taking~$(\ell+k)$-tuples as inputs and
an~$\ell$-tuple~$\bar x$ we let~$C_{\bar x}$ be the circuit obtained
by fixing the first~$\ell$ inputs to~$\bar x$; it takes~$k$-tuples as
inputs. Formally,~$C_{\bar x}$ is 
~$\PV$-term 
with variables~$C$ and $\bar x$
and~$\S^1_2$ proves~$C_{\bar x}(\bar y) =C(\bar x,\bar
y)$ and~$C_{\bar x} \le C$.

\begin{lemma}\label{lem:circuit} For every quantifier-free $\PV$-formula $F(\bar x)$ there is $c\in\N$ such that
$$
\S^1_2\vdash\forall n{\in}\Log\ \exists C{<}2^{n^c}\ \forall \bar x{<}2^n\ (C(\bar x)=1 \leftrightarrow F(\bar x)).
$$
\end{lemma}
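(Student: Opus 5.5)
We prove the lemma by induction on the construction of $F$, keeping the whole argument uniform in $n$ so that it can be carried out inside $\S^1_2$. The natural route is to show the stronger, function-level statement: for every $\PV$-function symbol $f(\bar x)$ there are $c,d\in\N$ and a $\PV$-function $g(n)$ such that $\S^1_2$ proves that for $n\in\Log$, $g(n)$ is (the code of) a circuit of size below $2^{n^c}$ with $d$-bounded polynomially many output bits satisfying $g(n)(\bar x)=f(\bar x)$ for all $\bar x<2^n$ (outputs read as binary numbers of length $\le n^d$). Once this holds for functions, a quantifier-free $\PV$-formula $F$ is a Boolean combination of atoms $t=s$ and $t<s$ of $\PV$-terms; compose the circuits for $t,s$ with a small comparison circuit and combine with $\wedge,\vee,\neg$ gates. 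The witness $C$ is then given explicitly by a $\PV$-term $g_F(n)$, so the existential claim follows once the universal correctness statement $\forall\bar x{<}2^n\,(g_F(n)(\bar x)=1\leftrightarrow F(\bar x))$ is proved, and the bound $C<2^{n^c}$ is immediate from the size bound $s\le n^{c'}$ and the coding bound $2^{10 s|s|}$.

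For the function-level claim we follow Cook's inductive definition of $\PV$ (equivalently, Cobham's characterization): base functions ($0$, successor-type functions, projections, $\lfloor x/2\rfloor$, $|x|$, $\#$, $+$, $\cdot$, and the like) get explicit circuit families written down as $\PV$-functions of $n$, whose correctness is verified by quantifier-free (hence, by Remark~\ref{rem:PV}, available) length induction on bit positions; composition is handled by plugging circuit families together, with correctness following directly from the correctness of the components and $C_{\bar x}$-type properties of $\eval$. The key case is limited recursion on notation $f(\bar x,y)$ defined from $g,h$ with bound $|f|\le|k|$: here the circuit for $f$ on inputs of length $n$ stacks $n$ copies of the circuit for $h$ (at input length polynomial in $n$, determined by the bound $k$), each fed with $\lfloor y/2^{n-i}\rfloor$ and the output of the previous layer. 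Its correctness is proved by length induction on $i\le n$ on the quantifier-free $\PV$-formula ``layer $i$ outputs $f(\bar x,\lfloor y/2^{n-i}\rfloor)$'', which $\S^1_2$ supports by Remark~\ref{rem:PV}.

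The main obstacle is bookkeeping rather than ideas: one must make sure all the circuit-producing maps $n\mapsto g_f(n)$ are themselves $\PV$-functions and that the correctness statements stay quantifier-free (with the universally quantified $\bar x$ free), so that only open $\PV$-induction is used. A cleaner alternative, which we would try first to save effort, is to bypass the recursion on $\PV$ by using a single universal fact: every $\PV$-function is computed by a polynomial-time Turing machine whose computation is $\S^1_2$-provably correct, and the standard tableau-to-circuit translation (a polynomial-size circuit computing the machine's configuration table row by row) is verified by length induction on time steps for the quantifier-free statement ``row $t$ of the circuit equals the $t$-th configuration''. Either way, the constant $c$ depends only on the polynomial bounds attached to $F$.
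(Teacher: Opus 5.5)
The paper gives no proof of this lemma. It is stated in the preliminaries as a standard fact: the $\S^1_2$-formalized version of ``polynomial time has uniform polynomial-size circuits''. So there is no argument of the authors to compare yours with. Your sketch is the usual proof and is sound, but one point must be corrected and two need filling in.

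\textbf{The correction.} The circuit generator cannot be a $\PV$-function of $n$ written in binary. For $n\in\Log$ the value $g(n)$ has length polynomial in $|n|$, which is far too short to code a circuit with about $n$ input gates. Take it as a function of $2^n$ (that is, of $1\#y$ with $n=|y|$), so that $n$ is available in unary, and use $g_F(2^n)$ as the explicit witness.

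\textbf{First gap: composition.} Here you need an $\S^1_2$-provable compositionality law for $\eval$: evaluating a plugged-together circuit equals evaluating the outer circuit on the outputs of the inner ones. This is proved by open induction on gate indices, and you also need the provable polynomial bounds on the lengths of the inner outputs. The restriction property of $C_{\bar x}$ that you invoke does not give this.

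\textbf{Second gap: limited recursion on notation.} The layers must mirror the defining clauses exactly. Layers that read the leading zeros of the $n$-bit representation of $y$ must act as the identity, because the step clause does not apply to $s_0(0)=0$. Any truncation to the bound in Cook's scheme must also be built into the layer. With this, the step of your open length induction is just the defining equation of $f$ plus correctness of the circuit for $h$ at the larger input length.

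\textbf{On your two routes.} The tableau route is the one closer to the toolkit the paper uses elsewhere. The paper relies on the cited fact that $\S^1_2(\alpha)$ proves every $\PV(\alpha)$-function is computed by an explicit $\P$-machine. Correctness of a tableau circuit can then be verified by the same kind of quantifier-free induction over time steps used throughout Section~\ref{sec:machines}. The only extra bookkeeping is building the one-step circuit directly from the transition table, so that the lemma is not used circularly for $\Next$. You also have to recover the true input lengths from the padded $n$-bit inputs to form the start configuration, and to run for a time bound that is uniform over all $\bar x<2^n$; repeating halting configurations makes this harmless. The recursion-on-$\PV$ route is self-contained, but it needs the compositionality lemma and more case analysis.
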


\noindent\textbf{Two-sorted theories} Besides {\em number variables} $x,y,\ldots$ as before, two-sorted theories have {\em set variables} $X,Y,\ldots$ and
a binary relation symbol~$x\in X$. Models have the form~$(N,\mathcal
Y)$ where~$N$ interprets the number sort symbols of Buss' language,
set variables range over~$\mathcal Y$, and~$\in$ is interpreted by a
subset of~$N\times\mathcal Y$. The standard model has~$N=\N$ and the
obvious interpretations,~$\mathcal Y$ as the finite subsets of~$\N$
and interprets~$\in$ by actual membership.  Since we shall use also
capital letters for number variables (e.g.,~when intended to denote a
circuit) we write quantifiers on set variables as~$\exists_2X$
and~$\forall_2X$, i.e., with an index 2.

Given a formula~$F(\bar X,\bar x,v)$ or a circuit~$C(v)$, we
use~$F(\bar X,\bar x,\cdot)$ and~$C(\cdot)$ to represent the sets
of~$v$'s satisfying~$F$ or~$C$. More precisely, for a
formula~$\varphi(X,\ldots)$ without set sort equalities~$X=Y$ we
define the formulas
$$
\varphi\big(F(\bar X,\bar x,\cdot),\ldots\big) \;\text{ and }\; \varphi\big(C(\cdot),\ldots\big)
$$ 
by replacing atomic subformulas $t\in X$ of $\varphi$ by $F(\bar
X,\bar x,t)$ and~$C(t)=1$, respectively.

The sets of formulas~$\Sigma^b_i(\alpha)$ and~$\Pi^b_i(\alpha)$ are
defined as before but now allowing atomic subformulas~$t\in X$ for
terms~$t$ and set variables~$X$. In particular, set-sort
equality~$X=Y$ and quantifiers on set variables are not allowed. 
Using the notation
$$
X<z \;:=\; \forall x \;(x\in X\to x<z),
$$ 
the two-sorted theories~$\S^i_2(\alpha)$ and~$\T^i_2(\alpha)$ are
given by length-induction and, respectively, induction
for~$\Sigma^b_i(\alpha)$-formulas plus
 the universal closures of
\begin{equation*}
\begin{array}{ll}
\textit{set-boundedness:}&\exists z\ X<z,\\
\textit{extensionality:} &  \forall z\ (z \in X\leftrightarrow z \in Y)\ \to \ X =Y,\\
\textit{$\Delta^b_1(\alpha)$-comprehension:}
&\exists_2 X\ \forall x\ (x \in X\leftrightarrow x<y\wedge\varphi(\bar X,\bar x,x))
\end{array}
\end{equation*}
for all~$\varphi \in \Delta^b_1(\alpha)$, where~$\Delta^b_1(\alpha)$
denotes the collection of all formulas that are equivalent to both
a~$\Sigma^b_1(\alpha)$-formula and a~$\Pi^b_1(\alpha)$-formula
provably in the subtheory of~$\S^1_2(\alpha)$ that does not
have~$\Delta^b_1(\alpha)$-comprehension.
Full two-sorted bounded arithmetic
is~$\T_2(\alpha):=\bigcup_{i>0}\T^i_2(\alpha)$.  
As before, we identify~$\S^1_2(\alpha)$ with a
conservative extension to the language~$\PV(\alpha)$ that contains
symbols~$f^{\bar X}(\bar x)$ for all polynomial-time algorithms with
oracles~$\bar X$ (represented by set variables) and proves defining
equations.

\begin{remark} \label{rem:PValpha}
A formula is~$\Delta^b_1(\alpha)$ if and only if~$\S^1_2(\alpha)$
proves that it is equivalent to a
quantifier-free~$\PV(\alpha)$-formula 
without set equalities. 
In particular, that~$\S^1_2(\alpha)$ proves $\Delta^b_1(\alpha)$-induction  means that it
proves  induction for such formulas.
\end{remark}
The~$\hat\Sigma^{1,b}_1$-formulas are those of the
form~$\exists_2 X\varphi$
for~$\varphi\in \Sigma^{1,b}_0:=\bigcup_i\Sigma^b_i(\alpha)$.
The~$\Sigma^{1,b}_1$-formulas are those in the closure
of~$\hat\Sigma^{1,b}_1$ under~$\vee$ and~$\wedge$ and bounded
number-sort quantification~$\exists x{<}t$ and~$\forall x{<}t$ for
terms~$t$ not involving~$x$.  The theories~$\V^0_2$ and~$\V^1_2$ are
given by~$\T_2(\alpha)$ plus~$\Sigma^{1,b}_0$-comprehension
and~$\Sigma^{1,b}_1$-comprehen\-sion, respectively. The
theory~$\U^1_2$ is given by~$\V^0_2$
plus~$\Sigma^{1,b}_1$-length-induction: for
every~$\varphi\in\Sigma^{1,b}_1$ the universal closure of
$$
\varphi(\bar X,\bar x,0)\wedge\forall y{<}|z|\ (\varphi(\bar X,\bar x,y)\to \varphi(\bar X,\bar x,y+1))\ \to\
\varphi(\bar X,\bar x,|z|).
$$

\begin{lemma}\label{lem:strict} 
For every $\Sigma^{1,b}_1$-formula $\varphi$ there is a 
$\hat\Sigma^{1,b}_1$-formula $\hat\varphi$ such that
$\U^1_2\vdash (\varphi \to \hat\varphi)$
 and $\S^1_2(\alpha)\vdash (\hat\varphi \to \varphi)$.
\end{lemma}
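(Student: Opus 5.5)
We argue by induction on the construction of $\varphi$ as a $\Sigma^{1,b}_1$-formula, pulling all existential set quantifiers to the front. For $\varphi\in\hat\Sigma^{1,b}_1$ we take $\hat\varphi:=\varphi$. Conjunctions and disjunctions are easy. Given $\hat\varphi_0=\exists_2X\,\psi_0$ and $\hat\varphi_1=\exists_2Y\,\psi_1$, rename variables apart and set
$$\widehat{\varphi_0\wedge\varphi_1}:=\exists_2X\exists_2Y\,(\psi_0\wedge\psi_1).$$
Blocks of set quantifiers are merged into a single one by coding a pair $X,Y$ as one set $Z=\{2x\mid x\in X\}\cup\{2y+1\mid y\in Y\}$. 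The set $Z$ exists by $\Delta^b_1(\alpha)$-comprehension, using set-boundedness for $X,Y$, so this coding is available already in $\S^1_2(\alpha)$. Conversely, from $Z$ one recovers $X,Y$ as $\{x\mid 2x\in Z\}$ and $\{y\mid 2y+1\in Z\}$, again by $\Delta^b_1(\alpha)$-comprehension; alternatively one substitutes these definitions into $\psi_i$, which keeps it in $\Sigma^{1,b}_0$. For disjunction we use $\exists_2X\exists_2Y(\psi_0\vee\psi_1)$. The quantifier $\exists x{<}t$ commutes with $\exists_2 X$ outright: take $\exists_2X\,\exists x{<}t\,\psi$. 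In all these cases both directions hold already in $\S^1_2(\alpha)$.

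The real case is bounded universal quantification $\forall x{<}t\,\exists_2X\,\psi(x,X)$. We set
$$\hat\varphi:=\exists_2 Z\ \forall x{<}t\ \psi\big(x,\{v\mid\langle x,v\rangle\in Z\}\big),$$
where the $x$-th slice of $Z$ is substituted for $X$ by replacing atoms $s\in X$ with $\langle x,s\rangle\in Z$. This is again $\hat\Sigma^{1,b}_1$, since the matrix is $\Sigma^{1,b}_0$. The direction $\hat\varphi\to\varphi$ holds in $\S^1_2(\alpha)$: given $Z$ and $x<t$, $\Delta^b_1(\alpha)$-comprehension yields the slice $X_x=\{v<b\mid\langle x,v\rangle\in Z\}$, where $b$ is a bound for $Z$. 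The direction $\varphi\to\hat\varphi$ is a collection/choice principle: from $\forall x{<}t\,\exists_2X\,\psi$ we must produce a single $Z$ coding witnesses for all $x<t$. This is where $\U^1_2$ is needed.

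To prove this collection in $\U^1_2$, we use $\Sigma^{1,b}_1$-length-induction, or rather a form of it for a $\hat\Sigma^{1,b}_1$ statement. First we need a bound on the witnesses. Set-boundedness gives each witness some bound, but not a uniform one. We handle this by truncating: the matrix $\psi$ is bounded, so only membership queries $s\in X$ with $s$ below some term $r(x,\ldots)$ matter, and we may replace $X$ by $X\cap r$ via $\Sigma^{1,b}_0$-comprehension in $\V^0_2$. Then we prove by induction
$$\theta(y):=\exists_2Z\ \forall x{<}\min(y,t)\ \psi\big(x,Z_x\big).$$
Plain induction on $y$ up to $t$ is not length induction. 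So we instead run length induction over $u\le|t|$ on the statement ``for every $w$, there is a $Z$ coding witnesses for all $x$ in the interval $[w\cdot 2^u,(w+1)2^u)\cap[0,t)$''. The step from $u$ to $u+1$ glues two intervals' codes by $\Sigma^{1,b}_0$-comprehension. This statement has a universal number quantifier in front of a set existential, so it is not literally $\Sigma^{1,b}_1$. We fix this by an inner parameter: we do length induction on $u$ for the $\Sigma^{1,b}_1$-formula $\forall w{<}2^{|t|-u}\,\exists_2Z\ldots$. This is a bounded universal in front of $\exists_2$, which is allowed in $\Sigma^{1,b}_1$ as defined. Hence $\Sigma^{1,b}_1$-length-induction applies directly.

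The base case $u=0$ is the hypothesis $\varphi$. At $u=|t|$ and $w=0$ we get $\hat\varphi$. The main obstacle is this bookkeeping, namely checking that the gluing step stays within $\Sigma^{1,b}_0$-comprehension and that the induction formula is genuinely $\Sigma^{1,b}_1$. We expect this to work as sketched, in the form of a divide-and-conquer replacement of ordinary induction by length induction.
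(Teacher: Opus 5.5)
Your proposal is correct and matches the paper's proof: you use the same transformation of $\forall x{<}t\,\exists_2X\,\psi$ into $\exists_2Z\,\forall x{<}t\,\psi(x,Z_x)$, and you get the direction $\hat\varphi\to\varphi$ in $\S^1_2(\alpha)$ by $\Delta^b_1(\alpha)$-comprehension on slices, as the paper does. The only difference is that the paper simply cites $\Sigma^{1,b}_1$-collection in $\U^1_2$ \cite[Lemma~5.5.10]{krabuch}, whereas you prove it directly by $\Sigma^{1,b}_1$-length-induction over dyadic blocks; this works as sketched, and your preliminary truncation of witnesses is unnecessary, since each gluing step only needs set-boundedness of the two pieces being glued.
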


\begin{proof} The formula $\hat\varphi$ is built by induction on the syntactic structure of $\varphi$: it transforms subformulas of the form $\forall x{<}t \ \exists_2 X\ \psi(X,x,\ldots)$ to the form $\exists_2 X\ \forall x{<}t\ \psi(X_x,x,\ldots)$. Here, we write $X_x$ for $H(X,x,\cdot)$ where
$H(X,x,v):=\langle x,v\rangle\in X$. 
 Part (a) is an instance of $\Sigma^{1,b}_1$-collection provable in $\U^1_2$; see \cite[Lemma 5.5.10]{krabuch}. Part (b) 
 follows from $\Delta^{b}_1(\alpha)$-comprehension.
\end{proof}

The {\em number-sort} formulas are those without (free or bound) set
variables. It is not hard to see
that~$\S^i_2(\alpha)$,~$\T^i_2(\alpha)$, and~$\T_2(\alpha)$ prove the
same number-sort formulas as~$\S^i_2$,~$\T^i_2$, and~$\T_2$,
respectively. A way to see this is the following lemma (which holds
also for~$\T^i_2$).  For~$N\models\S^1_2$ let~$\C_N$ be the family of
all sets of the form~$\hat C:=\{a\in N\mid N\models C(a)=1\}$
where~$C$ ranges over~$N$ (equivalently, over the circuits in the
sense of~$N$). The model~$(N,\C_N)$ interprets~$\in$ by set
membership. Then:

\begin{lemma}\label{lem:CM} 
  For every~$i>0$ and~$N$, if~$N \models \S^i_2$,
  then~$(N,\C_N)\models \S^i_2(\alpha)$.
\end{lemma}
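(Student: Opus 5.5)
The plan is to check the axioms of $\S^i_2(\alpha)$ in $(N,\C_N)$ one group at a time. The main tool is a translation: every $\Sigma^b_i(\alpha)$-formula evaluated in $(N,\C_N)$ becomes a $\Sigma^b_i$-formula evaluated in $N$. For a formula $\varphi(\bar X,\bar x)$ and circuits $\bar C\in N$, let $\varphi(\bar C(\cdot),\bar x)$ be obtained by replacing each atom $t\in X_j$ with $C_j(t)=1$, as in the notation above. Since $\eval$ is a $\PV$-function, this formula is a $\Sigma^b_i$-formula of the $\PV$-language with the $C_j$ as extra number parameters. A routine induction on the formula then gives
$$(N,\C_N)\models\varphi(\hat C_1,\ldots,\hat C_k,\bar a)\iff N\models\varphi(C_1(\cdot),\ldots,C_k(\cdot),\bar a).$$
This holds because the bounded number quantifiers range over the same $N$ on both sides.

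\textbf{Induction and the basic axioms.} The universal axioms for Buss' symbols involve only the number sort, so they hold in $(N,\C_N)$ because they hold in $N$. For an instance of $\Sigma^b_i(\alpha)$ length-induction with set parameters $\hat C_j$, the translation turns it into an instance of $\Sigma^b_i$ length-induction in $N$ with the $C_j$ as parameters. That instance holds since $N\models\S^i_2$. Here I use that $\S^i_2$ has length-induction for $\Sigma^b_i$-formulas of the $\PV$-language, via the conservative extension to $\PV$ that the paper identifies with $\S^i_2$. For $\PV(\alpha)$-symbols $f^{\bar X}$, I would either treat the theory in Buss' language with $\in$, or note that an oracle polynomial-time algorithm with oracles given by circuits is itself a polynomial-time function of the circuits. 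The latter is again a $\PV$-function.

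\textbf{Set-boundedness and extensionality.} Set-boundedness holds because $C(x)=1$ forces $x<2^{n}$ with $n\le|C|$, and $2^{|C|}$ exists in $N$; one can take $z=2C+2$ or $1\#C$. Extensionality holds because $\C_N$ consists of actual subsets of $N$, membership is real membership, and set equality is interpreted as identity of subsets.

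\textbf{$\Delta^b_1(\alpha)$-comprehension.} This is the main obstacle. Take $\varphi\in\Delta^b_1(\alpha)$, parameters $\hat C_j$ and $\bar a$, and a bound $b$. By Remark~\ref{rem:PValpha}, $\varphi$ is equivalent to a quantifier-free $\PV(\alpha)$-formula $\psi$. The subtlety is that this equivalence is provable only in $\S^1_2(\alpha)$, which includes comprehension itself. To avoid circularity I would use the definition as stated: $\Delta^b_1(\alpha)$ is defined via provability in the subtheory without comprehension, whose other axioms we have already verified in $(N,\C_N)$. So in $(N,\C_N)$, membership $x\in X\leftrightarrow x<b\wedge\varphi$ is equivalent to a $\Sigma^b_1$ and a $\Pi^b_1$ condition in $N$, with the circuit parameters fixed.

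To get a quantifier-free form, I would invoke the witnessing theorem behind Remark~\ref{rem:PValpha} relativized to oracles. Its proof yields a polynomial-time oracle algorithm $f^{\bar X}$ deciding $\varphi$, provably in the comprehension-free subtheory. Substituting the circuits for the oracles gives a $\PV$-function $g(\bar C,\bar a,x)$ whose value equals the truth value of $\varphi$ in $(N,\C_N)$. Then Lemma~\ref{lem:circuit}, applied to the quantifier-free formula $x<b\wedge g(\bar C,\bar a,x)=1$ with $n=|\langle b,\bar C,\bar a\rangle|$ and the parameters absorbed as inputs, yields a circuit $D$. Fixing $D$'s parameter inputs to $\bar C,\bar a$ and $b$ via the $C_{\bar x}$ construction gives a circuit $D'$ with $\hat D'=\{x\mid x<b\wedge\varphi\}$.

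The delicate point I expect to need care is that the witnessing theorem applies to the comprehension-free base theory. If that is problematic, a fallback is to first prove that the subtheory without comprehension already yields the quantifier-free normal form, by Buss-style witnessing with oracles.
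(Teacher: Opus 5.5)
Your proposal is correct. For set-boundedness, extensionality and length-induction it coincides with the paper's proof, using the same translation $\varphi\mapsto\varphi^*$ that turns set parameters $\hat C$ into number parameters $C$. It also ends the same way, with Lemma~\ref{lem:circuit} followed by fixing the parameter inputs. The genuine difference is the order of operations in the comprehension step. The paper translates first and witnesses second: it treats $\varphi^*$ as a number-sort $\Delta^b_1$-formula with the circuits as parameters. It then applies the plain, unrelativized Buss witnessing theorem for $\S^1_2$, which $N$ satisfies outright, to get a quantifier-free $\PV$-formula $F$. You witness first, via a relativized witnessing theorem for the comprehension-free two-sorted subtheory, and substitute circuits for oracles afterwards. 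Your order costs extra machinery in two places. First, you need witnessing for that subtheory, whose set-boundedness and extensionality axioms are not bounded formulas and must be shown harmless. Second, you need to interpret the $\PV(\alpha)$-symbols in $(N,\C_N)$ by circuit-simulating $\PV$-functions $g$, and to check that $g(\bar C,\ldots)$ depends only on the sets $\hat C$ and not on the circuits representing them. What your order buys is explicitness about the circularity, which the paper's one-line \q{$\varphi^*(x)$ is $\Delta^b_1$} leaves implicit. Buss' theorem needs the equivalence of $\varphi^*$ with $\sigma^*$ and with $\pi^*$ to be $\S^1_2$-provable uniformly in the circuit variables, not merely true in $N$ at the given $\bar C$; here $\sigma\in\Sigma^b_1(\alpha)$ and $\pi\in\Pi^b_1(\alpha)$ witness $\varphi\in\Delta^b_1(\alpha)$. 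In the translate-first order this uniformity is cheap. The comprehension-free axioms hold in $(N',\C_{N'})$ for every $N'\models\S^1_2$, which is the part you already verified. So these equivalences hold in every such $N'$ for all values of the circuit variables, and by completeness $\S^1_2$ proves them. After that, plain Buss witnessing applies, with no relativized witnessing and no interpretation of oracle symbols required.
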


\begin{proof} That~$(N,\C_N)$ satisfies set-boundedness and
  extensionality is clear. We verify length-induction and
  comprehension for the appropriate collections of formulas. First we
  need to introduce some notation. Let~$\varphi(\bar x)$ be
  a~$\Sigma^{1,b}_0$-formula with parameters from~$(N,\C_N)$,
  i.e.,~$\varphi(\bar x)$ is obtained from a~$\Sigma^{1,b}_0$-formula
  by plugging set parameters from~$\C_N$ and number parameters
  from~$N$.  Define~$\varphi^*(\bar x)$ by replacing every
  subformula~$t\in\hat C$ by~$C(t)=1$ where~$t$ is a term (possibly
  with parameters from~$N$) and~$\hat C$ is a set parameter
  from~$\C_N$. Note that every set parameter in~$\varphi(\bar x)$
  becomes a number parameter in~$\varphi^*(\bar x)$ and
$$
(N,\C_N)\models\forall \bar x\ (\varphi(\bar x)\leftrightarrow\varphi^*(\bar x)).
$$
In particular, if~$\varphi(x)$ is a~$ \Sigma^b_i(\alpha)$-formula with
parameters from~$(N,\C_N)$, then~$\varphi^*(x)$ is
a~$\Sigma^b_i$-formula with parameters from~$N$. Thus,
length-induction for~$\varphi(x)$ follows from length-induction
for~$\varphi^*(x)$, which holds in~$N$
because~$N\models\S^i_2$. 

To verify comprehension, let~$a\in N$ and let~$\varphi(x)$
be~$\Delta^b_1(\alpha)$ with
parameters from~$(N,\C_N)$. Then~$\varphi^*(x)$ is~$\Delta^b_1$ with
parameters from~$N$, say~$\bar b$. Since~$N\models\S^1_2$, Buss' witnessing
theorem implies that~$\varphi^*(x)$ is equivalent in~$N$
to~$F(\bar b,x)$ for a quantifier-free~$\PV$-formula~$F(\bar
y,x)$. For~$n\in N$ bigger than~$1,|a|,|\bar b|$,
Lemma~\ref{lem:circuit} gives~$C\in N$ such that
$$
N\models\forall \bar y,y,x{<}2^n\ \big(C(\bar y,y,x) = 1\leftrightarrow x<y\wedge F(\bar y,x)\big).
$$
In $N$, choose $D:=C_{\bar b,a}$, so
$N\models\forall x\ D(x)=C(\bar b,a,x)$.
Then~$\hat D\in \C_N$
and~
\[
(N,\C_N)\models\forall x\ (x\in\hat D\leftrightarrow x<a\wedge\varphi(x)).\qedhere
\]
\end{proof}

\subsection{Explicit machines}\label{sec:premachine}

We use multitape (deterministic oracle Turing) machines as our model
of computation, say with~$k$ input tapes and~$\ell$ oracle tapes, and
some work tapes. The tapes have cells indexed~$0,1,\ldots$, and the
content of cell 0 is a fixed symbol marking the end of the tape -- it
is never overwritten by something else and heads scanning it do not
move left. It takes inputs~$\bar x = (x_1,\ldots,x_k)\in\N^k$. The
input length is~$|\bar x|:=|x_1|+\cdots+|x_k|$.  Input tapes are read-only, i.e.,
heads on input tapes write what they scan and do not move right when
scanning blank. A computation stops when reaching a halting state,
which is either accepting or rejecting.
For~$\bar X=(X_1,\ldots,X_{\ell})\in P(\N)^\ell$ we denote
by~$M^{\bar X}$ the machine with oracles~$\bar X$. When entering a
special query state the computation moves to one out of~$2^\ell$ many
states encoding whether~$q_i\in X_i$ for all~$i=1,\ldots,\ell$
where~$q_i$ is the query, i.e., the number written in binary on
the~$i$-th oracle tape.

A {\em partial time-$t$ space-$s$ query-$q$ computation
  of~$M^{\bar X}$ on~$\bar x$} is a sequence of~$t+1$ configurations
the first being the start configuration of~$M^{\bar X}$ on~$\bar x$,
every other being the successor of the previous one, repeating halting
configurations.  Every configuration has head positions less than~$s$
on work tapes, less than~$|\bar x|+2$ on input tapes, and less
than~$|q|$ on query tapes.  Using~$|q|$ (instead of~$q$) means, on the
formal level, that we allow only polynomially bounded queries.  Such a
configuration is determined by~$\max\{s, |\bar x|+2,|q|\}+1$ numbers,
one for the state and, for each~$j<\max\{s, |\bar x|+2,|q|\}$, a
number coding for each tape the content of cell~$j$ and a {\em
  position bit} indicating whether the current head position is to the
left of~$j$ or not.

We identify~$M$ with its code by a number and assume that the numbers
determining a configuration are less than~$M$. We code a configuration
as above by the set~$X$ of~$\langle i,j\rangle$
for~$i<\max\{s, |\bar x|+2,|q|\}+1$ and~$j<|M|$ such that the~$i$-th
number in the configuration has~$j$-th bit~$1$.  There
are~$\PV(\alpha)$-functions which, with such an~$X$ as oracle,
retrieve the state, the head positions, the symbols scanned, and the
queries in~$X$. We make no assumptions on what these functions return
when~$X$ is not coding a configuration as above.  The partial
computation is coded by a set~$Y$ such
that~$Y_i:=\{x\mid \langle i,x\rangle\in Y\}$ is the~$i$-th
configuration.

Note that, in the standard model, $X < \Bd^0(M,\bar x,s,q)$ and
$Y < \Bd(M,\bar x,t,s,q)$ where 
$$
\begin{array}{lclcl}
\Bd^0(M,\bar x,s,q)&:=&\langle \max\{s 
,  |\bar x|{+}2,|q|\}{+}1,|M|\rangle,\\
\Bd(M,\bar x,t,s,q)&:=&\langle  t,\Bd^0(M,\bar x,s,q)\rangle.
\end{array}
$$

We proceed to the formal level. Let~$k\in\N$ be the number of inputs,
let~$\ell\in\N$ be the number of oracles, let~$\bar x=(x_1,\ldots,
x_k)$ be a~$k$-tuple of number variables, and let~$\bar X=(X_1,\ldots,
X_\ell)$ be an~$\ell$-tuple of set variables.
The individual bits that encode successor configurations are
implicitly computed by the following oracle machine. Recall our
convention that the successor of a halting configuration is the
configuration itself. If any of the ``checks'' in Lines~1--3 in the
description of the machine fails, then the machine outputs ``fail'' in
Line~4; else it completes Lines~5--9 and outputs a bit.
 \begin{enumerate}\itemsep=0pt
\item[0.] inputs $(M,\bar x,s,q)$ and oracles $\bar X,X$.
\item check that $M$ is the code of an oracle machine with $k$ input tapes and $\ell$ oracle tapes.
\item check that the state, the head positions, the scanned symbols,
  and the queries encoded in~$X$ are not off; i.e., (a) the retrieved
  state is a state of~$M$,
  (b) the retrieved
  position of all oracle heads is less than~$|q|$, and (c) the
  retrieved position of all work heads is less than~$s$.
\item check that the new head positions as determined by the
  transition table of $M$ on the state, the scanned symbols, and the
  head positions encoded in $X$ are also not off.
\item if any of the above checks fails, output ``fail'' and halt; else:
\item query $X$ to retrieve the state, the head positions, the scanned
  symbols, and the queries, and then look up the transition table
  of~$M$ and, if required, query the oracles~$\bar X$ to compute the
  new state, the new cell contents, and the new head positions.
\item compute $i$ and $j$ such that $v=\langle i,j\rangle$.
\item if $i > \max\{s 
,  |\bar x|+2,|q|\}$, then output 0. 
\item if $i = \max\{s
,  |\bar x|+2,|q|\}$, then output the $j$-th bit of the new state (Line~5).
\item if~$i< \max\{ s
, |\bar x|+2,|q|\}$, then output the~$j$-th
  bit of the number~$a<M$ that for each tape codes the content of
  cell~$i$ (from oracle~$X$ or Line~5, as appropriate) and its
  position bit (determined by the new position of its head from Line~5).
\end{enumerate}
The machine runs in polynomial time and returns ``fail'' or a bit.
Note that it does not depend on~$v$ whether ``fail'' is returned or
not.  Let~$\succ^{\bar X,X}(M,\bar x,s,q,v)$ be
the~$\PV(\alpha)$-symbol for this machine.

Clearly, there is a
quantifier-free~$\PV(\alpha)$-formula~$\Start(M,\bar x,s,q,v)$ such that,
provably in~$\S^1_2$, the set defined by~$\Start(M,\bar x,s,q,\cdot)$ is
the start configuration of~$M$ on~$\bar x$ if~$M$ codes a suitable
machine. Similarly, there are
quantifier-free~$\PV(\alpha)$-formulas~$\Fail$ and~$\Next$ with
suitable free variables of both sorts which, provably
in~$\S^1_2(\alpha)$, determine if the algorithm
of~$\succ^{\bar X,X}(M,\bar x,s,q,v)$ outputs
``fails''~(formula~$\Fail$) and, otherwise determines the output
bit~(formula~$\Next$). We define
$$
\q{$Y$ is a partial time-$t$ space-$s$ query-$q$ computation of $M^{\bar X}$ on $\bar x$} 
$$
as the following 
$\Pi^b_1(\alpha)$ formula with free number variables $M,\bar x,t,s,q$ and free set variables~$Y,\bar X$:
$$
\begin{array}{l}
\forall i{\le}t\ \forall v{<}\Bd(M,\bar x,t,s,q)\ (v\in Y_i\to v<\Bd^0(M,\bar x,s,q))\\
\wedge\ \forall v{<}\Bd^0(M,\bar x,s,q)\ \big( v\in Y_0\leftrightarrow\Start(M,\bar x,s,q,v)\big)\\
\wedge\ \forall i{<}t\ \forall v{<}\Bd^0(M,\bar x,s,q)\ \big( v\in Y_{i+1}\leftrightarrow 
\Next(\bar X,Y_i,M,\bar x,s,q,v) \big)\\
\wedge\ \forall i{<}t \ \neg \Fail(\bar X,Y_i,M,\bar x,s,q).\\
\end{array}
$$
Here again, we write $Y_i$ for $H(Y,i,\cdot)$ where
$H(Y,i,v):=\langle i,v\rangle\in Y$. Note that the formula does not
state~$Y<\Bd(M,\bar x,t,s,q)$ 
because this would require unbounded universal quantifiers.

\begin{lemma}\label{lem:unique}
If $\Unique(\bar X,Y,Z,M,\bar x,t,s,q)$ denotes
the formula
\begin{equation*}
\begin{array}{l}
 \q{$Y$ is a partial time-$t$ space-$s$ query-$q$ computation of $M^{\bar X}$ on $\bar x$} \\ 
\ \wedge\ \q{$Z$ is a partial time-$t$ space-$s$ query-$q$ computation of $M^{\bar X}$ on $\bar x$} \\
\ \to\ \forall v{<}\Bd(M,\bar x,t,s,q)\ (v\in Y\leftrightarrow v\in Z),
\end{array}
\label{eqn:unique}
\end{equation*}
then $\T^1_2(\alpha) \vdash \Unique(\bar X,Y,Z,M,\bar x,t,s,q)$, and
$\S^1_2(\alpha) \vdash \Unique(\bar X,Y,Z,M,\bar x,t,|s|,q)$.
\end{lemma}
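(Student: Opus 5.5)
We argue by induction on the time index $i$, showing that the $i$-th configurations of $Y$ and $Z$ agree. Assume both hypotheses of $\Unique$ and consider
$$
\psi(i) := \forall v{<}\Bd^0(M,\bar x,s,q)\ (v\in Y_i\leftrightarrow v\in Z_i).
$$
This is a $\Pi^b_1(\alpha)$-formula, since $Y_i$ and $Z_i$ abbreviate the quantifier-free conditions $\langle i,v\rangle\in Y$ and $\langle i,v\rangle\in Z$.

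The base case $\psi(0)$ is immediate: both $Y_0$ and $Z_0$ agree with $\Start(M,\bar x,s,q,\cdot)$ below $\Bd^0$.

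For the step, assume $\psi(i)$ with $i<t$. The formula $\Next(\bar X,Y_i,M,\bar x,s,q,v)$ is a quantifier-free $\PV(\alpha)$-formula that queries the oracle $Y_i$ only through the algorithm of $\succ$. The key claim is that this algorithm queries $Y_i$ only at points below $\Bd^0(M,\bar x,s,q)$. Granting the claim, $\S^1_2(\alpha)$ proves that $\Next$ evaluated with oracle $Y_i$ equals $\Next$ evaluated with oracle $Z_i$, because the two oracles agree on every query made. From this, $\psi(i+1)$ follows for all $v<\Bd^0$.

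I expect the claim to be the main technical nuisance, even though it is bookkeeping. The retrieval functions query pairs $\langle i',j\rangle$ with $i'\le\max\{s,|\bar x|+2,|q|\}$ and $j<|M|$. Such pairs lie below $\Bd^0$ by monotonicity of Cantor pairing. If the retrieval code as written might query beyond this range, I would instead pass to a modified oracle restricted to the range $<\Bd^0$; this does not change $Y_{i+1}$ on $v<\Bd^0$. The $\neg\Fail$ conjuncts ensure that the successor is computed rather than ``fail''. Even without them, the same argument shows that $\Fail$ and $\Next$ coincide on the two oracles.

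Having $\psi(i)\to\psi(i+1)$ for $i<t$, the conclusion is handled differently in the two theories.

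In $\T^1_2(\alpha)$ we apply $\Pi^b_1(\alpha)$-induction on $i\le t$, which is available because $\T^1_2(\alpha)$ proves $\Pi^b_1(\alpha)$-induction, equivalent to $\Sigma^b_1(\alpha)$-induction. This gives $\psi(i)$ for all $i\le t$. Finally, any $v<\Bd(M,\bar x,t,s,q)$ is of the form $\langle i,u\rangle$ with $i\le t$. If $u\ge\Bd^0$, the first conjunct of the computation formula makes $v\notin Y$ and $v\notin Z$. Otherwise $\psi(i)$ applies. In both cases $v\in Y\leftrightarrow v\in Z$.

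For $\S^1_2(\alpha)$ with space bound $|s|$, the point is that $\Bd^0(M,\bar x,|s|,q)$ is a length: it is polynomial in $|s|,|\bar x|,|q|,|M|$. Hence quantifying over $v<\Bd^0$ is sharply bounded, and a single configuration is determined by a number of polynomial length. By $\Delta^b_1(\alpha)$-comprehension, or directly via $\PV(\alpha)$-functions, we can define the number $w_i$ coding $Y_i\cap[0,\Bd^0)$, and similarly $w_i'$ coding $Z_i\cap[0,\Bd^0)$. Then $\psi(i)$ becomes equivalent to the quantifier-free statement $w_i=w_i'$. Since $\S^1_2(\alpha)$ proves induction for quantifier-free $\PV(\alpha)$-formulas (Remark~\ref{rem:PValpha}), we obtain the same induction on $i\le t$ as in the $\T^1_2(\alpha)$ case, and the conclusion follows as before. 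The step that needs care is making sure the extraction of $w_i$ from the oracle $Y$ is a genuine $\PV(\alpha)$-term. It is, because it makes only $\Bd^0$ many queries, which is polynomial in the input length.
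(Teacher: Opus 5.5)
Your argument follows the paper's proof. Both do induction on $i\le t$ for agreement of the $i$-th configurations below $\Bd^0(M,\bar x,s,q)$. In $\T^1_2(\alpha)$ this uses $\Pi^b_1(\alpha)$-induction. In $\S^1_2(\alpha)$, once $s$ is replaced by $|s|$, the quantifier becomes sharply bounded, so the formula is $\Delta^b_1(\alpha)$ and induction is available. You also make explicit two things the paper leaves implicit: that $\Next$ consults its configuration oracle only below $\Bd^0$, and that the first conjunct rules out $\langle i,u\rangle$ with $i\le t$ and $u\ge\Bd^0$. One small point: your fallback of restricting the oracle is legitimate only as a choice of how $\Next$ is implemented, which the paper leaves open. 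It is not a valid proof step, because the third conjunct ties $Y_{i+1}$ to $\Next$ evaluated with the oracle $Y_i$ itself.

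There is, however, a false step in your conclusion. With Cantor pairing, it is not true that every $v<\Bd(M,\bar x,t,s,q)=\langle t,\Bd^0\rangle$ has the form $\langle i,u\rangle$ with $i\le t$. For example, $\langle t+1,0\rangle<\langle t,\Bd^0\rangle$, since $\Bd^0\ge 2$. The computation formula constrains $Y_i$ only for $i\le t$. So if $Y$ is a genuine partial computation, then $Y\cup\{\langle t+1,0\rangle\}$ is another one, and the conclusion $\forall v{<}\Bd\,(v\in Y\leftrightarrow v\in Z)$ fails in the standard model.

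This case therefore cannot be repaired: the defect is in the statement itself. The paper's proof has the same gap, since it stops once the configurations $0,\ldots,t$ are shown to agree. What both arguments actually establish, together with your first-conjunct observation, is agreement on all $v=\langle i,u\rangle<\Bd$ with $i\le t$. The lemma should be stated in that form, or the computation formula should be strengthened to exclude elements $\langle i,u\rangle<\Bd$ with $i>t$. The same restriction on $v$ is needed wherever the lemma is applied to an arbitrary $v<\Bd$, as in the first step of the proof of Lemma~\ref{lem:alphagamma}.
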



\begin{proof} In~$\T^1_2(\alpha)$ use~$\Pi^b_1(\alpha)$-induction to
  prove for all~$i\leq t$, that the~$i$-th configurations in~$Y$
  and~$Z$ are equal:
$$
\forall v{<}\Bd^0(M,\bar x,s,q) \ \big(\langle i,v\rangle\in Y\leftrightarrow  \langle i,v\rangle\in Z\big).
$$
Replacing~$s$ by~$|s|$, this is~$\Delta^b_1(\alpha)$-induction,
available in~$\S^1_2(\alpha)$; cf.~Remark~\ref{rem:PValpha}.
\end{proof}

There
are~$\PV(\alpha)$-functions~$\textit{time}^Y(t),\textit{space}^Y(t),\textit{query}^{Y}(t)$
which compute, respectively, the mini\-mal~$i\le t$ such that~$Y_i$ is
halting and~$i=t+1$ if all are non-halting, the maximal cell visited
on any work tape in any configuration, and the maximal cell visited on
any oracle tape in any configuration.  E.g., for~$\textit{space}^Y(t)$
we can assume that the machines mark visited cells; then binary search
can compute, for any work tape and time~$i$, the maximum cell visited
on it.

In~$\S^1_2$ we can prove, by~$\Delta^b_1(\alpha)$-induction, that
given a partial time-$t$ space-$s$ query-$q$ computation~$Y$
of~$M^{\bar X}$ on~$\bar{x}$, the head positions on the input tapes
are not off. If also time, space and query bounds are verifiable
in~$\S^1_2(\alpha)$ we talk of an explicit machine:

\begin{definition} Let~$M$ be an oracle machine with~$k \in \N$ input tapes and~$\ell \in \N$
  oracle tapes. The machine~$M$ is {\em explicit} if there are
  terms~$t_0(\bar x),s_0(\bar x),q_0(\bar x)$ such that
\begin{eqnarray*}
\S^1_2(\alpha)&\vdash& \q{$Y$ is a partial time-$t$ space-$s$ query-$q$ computation of $M^{\bar X}$ on $\bar{x}$}\\
&&\ \to \textit{time}^Y(t)<t_0(\bar x)\wedge \textit{space}^Y(t)<s_0(\bar x)\wedge \textit{query}^{Y}(t)<|q_0(\bar x)|.
\end{eqnarray*}
In this case we say that $M$ is {\em witnessed} by $t_0,s_0,q_0$ and write 
\begin{eqnarray*}
\Bd^0_M(\bar x)&:=&\Bd^0(M,\bar x,s_0(\bar x),q_0(\bar x))\\ 
\Bd_M(\bar x)&:=&\Bd(M,\bar x,t_0(\bar x),s_0(\bar x),q_0(\bar x)).
\end{eqnarray*}
The machine~$M$ is an {\em explicit~$\EXP$-machine} if there exists a
term~$t_M(x)$ such that the above holds for~$t_0(\bar x)=t_M(\bar x)$
and~$s_0(\bar x)=t_M(\bar x)$, and~$q_0(\bar x)=t_M(\bar x)$.  
The machine~$M$
is an {\em explicit~$\PSPACE$-machine} if there exists a term~$t_M(x)$
such that the above holds for~$t_0(\bar x)=t_M(\bar x)$ and~$s_0(\bar
x)=|t_M(\bar x)|$, and~$q_0(\bar x)=t_M(\bar x)$. 
The machine~$M$ is an
{\em explicit~$\P$-machine} if there exists a term~$t_M(x)$ such that
the above holds for~$t_0(\bar x)=|t_M(\bar x)|$ and~$s_0(\bar x)=|t_M(\bar
x)|$, and~$q_0(\bar x)=t_M(\bar x)$.  In all three cases we say that~$M$ is \emph{witnessed} by the term~$t_M(\bar x)$.
%
%
\end{definition}

\begin{remark} 
  For every term~$t(\bar x)$ there exist~$c \in \N$ such
  that~$\S^1_2$
  proves~$(|\bar x|>1 \to t(\bar x) \leq 2^{|\bar x|^c})$.  This means
  that the time and space bounds~$t_0(\bar x)$ and~$s_0(\bar x)$ of an
  explicit~$\EXP$-machine can be chosen to be of the
  form~$2^{|\bar x|^c}$ with~$c \in \N$. Similarly, the space bound of
  an explicit~$\PSPACE$-machine and the time bound of an
  explicit~$\P$-machine can be chosen of the form~$|\bar x|^c$.
\end{remark}

The theory~$\S^1_2(\alpha)$ proves that every function~$f^{\bar
  X}(\bar x)\in\PV(\alpha)$ is computed by an explicit
oracle~$\P$-machine. See \cite[Lemma~18]{abm} for a precise
statement. For example, there are explicit
oracle~$\P$-machines~$M_\succ$ and~$M_\start$, with
suitable numbers of inputs and oracles, which compute
the~$\PV(\alpha)$-function~$\succ$ and
the~$\PV(\alpha)$-function~$\start$ which indicates the truth
value of the quantifier-free formula~$\Start$.

For explicit machines we omit writing bounds $t,s,q$ in the formulas above. E.g., if $M$ be an explicit  $\PSPACE$-machine witnessed by $t_M(\bar x)$, then
the formulas
$$
\begin{array}{l}
\q{$Y$ is a partial time-$t$ computation of $M^{\bar X}$ on $\bar x$},\\
\q{$Y$ is a halting computation of $M^{\bar X}$ on $\bar x$}
\end{array}
$$ 
are obtained by substituting~$|t_M(\bar x)|$ for~$s$ and~$t_M(\bar
x)$ for~$q$.  The latter additionally substitutes~$t_M(\bar x)$
for~$t$.  Obvious modifications give formulas with ``accepting'' or
``rejecting'' instead of ``halting''.

\section{Formally verified universal machines}\label{sec:machines}

\subsection{A general universal machine} 
The 
universal machine takes input~$(M,\bar x,t,s,q)$ and simulates~$t$
steps of the machine~$M$ on~$\bar x$ within space~$s$ and query
bound~$q$.  We give a careful implementation so that~$\S^1_2(\alpha)$
can verify its correctness. Correctness must be formulated with care
because~$\S^1_2(\alpha)$ cannot prove that exponential-time
computations exist.


\begin{theorem}\label{thm:univ} 
  Let~$k,\ell\in\N$, let~$\bar x=(x_1,\ldots, x_k),M,t,s,q$ be number
  variables, and let~$\bar X=(X_1,\ldots, X_\ell),Y,Z$ be set
  variables.  There are~$c\in\N$, an explicit machine~$U$ with~$k+4$
  input tapes and~$\ell$ oracle tapes,
  and~$\Delta^b_1(\alpha)$-formulas~$F,G$ such that,
  writing~$\bar y := (M,\bar x,t,s,q)$ and~$n := |\bar y|$, the
  following hold:
 \begin{enumerate} \itemsep=0pt
 \item[(a)] $U$ is witnessed by $t_U,s_U,q_U$ where
   $t_U(\bar y) := t \cdot (s+n)^c$, $s_U(\bar y) := (s+n)^c$,
   $q_U(\bar y)=q$.
\item[(b)] $\S^1_2(\alpha)\vdash  \q{$Y$ is a partial time-$t$ space-$s$ query-$q$ computation of $M^{\bar X}$ on $\bar x$}$ \\
\hspace*{7ex} $\ \ \to \q{$F(Y,\bar y,\cdot)$ is an accepting computation  of $U^{\bar X}$ on $\bar y$}$\\
  \hspace*{10ex}
  $\ \ \wedge\ \forall v{<}\Bd(\bar y)\
  (G(F(Y,\bar y,\cdot),\bar y,v)\leftrightarrow v\in Y)$.
\item[(c)] $\S^1_2(\alpha)\vdash \q{$Z$ is an accepting computation of $U^{\bar X}$ on $\bar y$}$ \\
  \hspace*{7ex} $\ \ \to\q{$G(Z,\bar y,\cdot)$ is a partial time-$t$ space-$s$ query-$q$  computation  of $M^{\bar X}$ on $\bar x$}$\\
  \hspace*{10ex}
  $\ \ \wedge\ \forall v{<}\Bd_U(\bar x)\ (F(G(Z,\bar y,\cdot),\bar y,v)\leftrightarrow v\in Z)$.
\end{enumerate}
\end{theorem}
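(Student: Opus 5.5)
We will construct $U$ so that each simulated step of $M$ corresponds to a fixed-length block of $U$-steps, and so that each configuration of $U$ is a bit-by-bit, polynomial-time decodable function of a configuration of $M$ together with a block-internal counter.

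\textbf{Design of $U$.} $U$ keeps on one work tape a verbatim copy of the current configuration $Y_i$ of $M$, in the coding of Section~\ref{sec:premachine}. This copy has $\max\{s,|\bar x|+2,|q|\}+1$ numbers, each below $M$. On further work tapes $U$ keeps a step counter $i\le t$ and scratch space. To pass from $Y_i$ to $Y_{i+1}$, $U$ runs a fixed explicit $\P$-machine implementing $\succ$, such as $M_\succ$. It sweeps over all $v<\Bd^0(M,\bar x,s,q)$, writes the new configuration into a second buffer, copies the buffer back, and erases scratch.

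Crucially, we pad every block to exactly $(s+n)^c$ steps, using a clock, independently of the data. Oracle queries of $M$ are answered by copying the query from the stored configuration onto $U$'s own oracle tapes. This gives query bound $q$.

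$U$ accepts after $t$ blocks. It rejects immediately if $\Fail$ holds at some stage, or if $M$ does not code a suitable machine. Halting configurations are repeated through the remaining blocks, so $U$ accepts exactly when a partial time-$t$ computation exists.

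\textbf{Explicitness, part (a).} Since the block length is fixed and the counter is incremented once per block, $\S^1_2(\alpha)$ proves by $\Delta^b_1(\alpha)$-induction on block number that $\textit{time}^Z(t')<t\cdot(s+n)^c$. The space and query bounds are local properties of each configuration, checked similarly. The induction is over the block index, with a $\PV(\alpha)$-decodable property of a single configuration. Here we use that $\textit{space}^Z$ and related functions are $\PV(\alpha)$, so Remark~\ref{rem:PValpha} applies.

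\textbf{Translations $F$ and $G$.} The $j$-th configuration of $U$ is determined by $j=i\cdot(s+n)^c+r$ and by $Y_i$ (together with the oracles for the query answers). A $\PV(\alpha)$ function with oracle $Y$ can compute any bit of it: it locates $Y_i$ and simulates the $\P$-time block for $r$ steps. This requires $r<(s+n)^c$, polynomial in the lengths of the inputs, since $s$ is treated as a number here and $(s+n)^c$ plays the role of the $\P$ bound. Hence $F(Y,\bar y,v)$ is $\Delta^b_1(\alpha)$.

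Conversely, $G(Z,\bar y,\langle i,w\rangle)$ reads the stored copy in $Z_{i\cdot(s+n)^c}$, which is also $\Delta^b_1(\alpha)$.

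\textbf{Proof of (b) and (c).} For (b), the defining clauses of ``$F(Y,\bar y,\cdot)$ is an accepting computation of $U$'' are universal statements about consecutive configurations. Within a block they hold by construction, since $F$ literally simulates the block. At block boundaries they reduce to the matching clause for $Y$. The equation $G(F(Y))=Y$ holds by definition.

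For (c), we must show that $Z_{i(s+n)^c}$ contains the $M$-configuration copy, correctly formatted, and equals the successor of the previous stored copy. This step is the main obstacle, because it needs induction over $i\le t$, and $t$ may be exponential. We avoid genuine induction: $Z$ is given, so correctness of block $i$ follows from correctness of the $U$-steps inside block $i$ alone. That is a $\Delta^b_1(\alpha)$-induction on $r<(s+n)^c$ with $i$ fixed, of length polynomial in $|\bar y|$-many relevant bits, since only the current block's data is involved.

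The identity $F(G(Z))=Z$ then says that each configuration of $Z$ equals the one recomputed from its block start. We prove it by the same induction on $r$ within a block, now polynomial in $s+n$, using the uniqueness argument of Lemma~\ref{lem:unique} in its $\S^1_2(\alpha)$ form, where the length is $|s|$-type since $r$ ranges below the term $(s+n)^c$ handled via length induction on $r$'s bits. If that induction turns out too long for $\S^1_2(\alpha)$, we fall back on $\Next$-determinism: equality of two single-step successors is quantifier-free, so the comparison is only ever made locally between consecutive configurations, and each local comparison is $\PV(\alpha)$-checkable.
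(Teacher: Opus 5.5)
There is a genuine gap, and it is in your argument that $F$ is $\Delta^b_1(\alpha)$. You let a $\PV(\alpha)$-function with oracle $Y$ compute a bit of $U$'s configuration at time $i\cdot(s+n)^c+r$ by ``simulating the block for $r$ steps'', and you treat $r<(s+n)^c$ as polynomial. It is not. Here $s$ is one of the inputs with $|s|\le n$, so $(s+n)^c$ can be $2^{\Omega(n)}$, whereas a $\PV(\alpha)$-function of $(\bar y,v)$ must run in time polynomial in $n$. A single configuration of $U$ already has about $s_0:=\Bd^0(M,\bar x,s,q)$ cells. Your block (a sweep over all $v<s_0$, each running $M_{\succ}$, then a copy-back) is therefore an exponentially long computation, not a $\P$-time one. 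For $F$ to be polynomial-time, every bit of $U$'s configuration at time $\tau$ must be given in closed form from $\tau$ and a few oracle bits of $Y$. That forces $U$'s timing to be rigid and data-independent at \emph{every} nesting level, not just padded per block. The paper builds exactly this, with exact time points $r_u,r_{u,i},r_{u,i,j},r_{u,i,j,p}$ for rounds, cells, simulated steps of $M_{\succ}$, and scan positions. In particular, each query ``$z\in X$?'' that $M_{\succ}$ makes to the stored configuration $X$ is answered by a \emph{query-resolution computation}: a sweep over the entire configuration tape in a fixed number of steps, independent of $z$. Your proposal never says how these queries are answered. The naive method (walk the head to cell $z$) takes $z$-dependent time and destroys any closed-form description.

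The same problem undermines your inductions. In (a), the inductive step across one block is itself an exponentially long computation that you do not analyze. In (c), your induction on $r<(s+n)^c$ assumes that the $r$-th configuration of the block equals a recomputed one. That compares configurations of exponential size, so the hypothesis is $\Pi^b_1(\alpha)$, which is precisely why Lemma~\ref{lem:unique} needs $\T^1_2(\alpha)$. The $\S^1_2(\alpha)$ case of that lemma needs \emph{space} of the form $|s|$; $U$ has space $(s+n)^c$, and the time bound is irrelevant there. Falling back on $\Next$-determinism does not help, because chaining one-step local equalities over exponentially many steps is the same $\Pi^b_1(\alpha)$-induction.

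What is needed are nested $\Delta^b_1(\alpha)$-inductions, on $u\le t$, $i\le s_0$, the steps $j$ of $M_{\succ}$, and scan positions $p\le s_0$. Their hypotheses may mention only polynomially many bits: clock and counter values, head positions, the polynomial-size configuration of the simulated $M_{\succ}$, and the content of one designated cell. The large configuration $X$ is carried as a set parameter read off from $Z$ at time $r_u$. The identity $F(G(Z,\bar y,\cdot),\bar y,\cdot)=Z$ then follows by first inducting on head positions and counters. Next, for each fixed cell, one inducts on time that it is never altered except where the design says. Lemma~\ref{lem:unique} is used only for the small-space simulations of $M_{\succ}$.
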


\begin{proof} 
  The machine~$U$ on input~$\bar y = (M,\bar{x},t,s,q)$ of length~$n =
  |\bar y|$ iterates the function~$\succ$ for~$t$ times using two {\em
    configuration tapes} holding configurations coded as binary
  strings.  This is done in~$t$ {\em rounds}, one for each~$u < t$.
  In round~$u>0$, the machine~$U$ overwrites one configuration tape,
  namely cell~$i+1$ with the bit~$\Next(\bar X,X,M,\bar x,s,q,i)$
  where~$X$ is the configuration on the other configuration tape,
  computed in the previous round~$u$; round~$0$ uses~$\Start(M,\bar
  x,s,q,i)$ instead. In each round,~$U$ also evaluates~$\Fail(\bar
  X,X,M,\bar x,s,q)$ and, in case it holds, sets a {\em flag bit}
  to~$1$; at the start the flag bit is set to~$0$. After round~$t$,
  the machine accepts if this flag bit is~$0$, and rejects if it
  is~$1$.  In the details that follow we define~$U$ in such a way that
  one can compute the exact times at which each round starts and ends,
  and how each new successor configuration is computed from the
  previous one, uniformly, and in exactly the same number of steps in
  each round.

\bigskip

\noindent\textbf{Description of~$U$.} The machine~$U$ starts with a {\em
    preparation phase} and ends with a {\em wrap-up phase}, both using
  explicit~$\P$-machines. In the preparation phase, it checks
  that~$M$ codes an oracle machine with~$k$ input tapes and~$\ell$
  oracle tapes, and halts rejecting if this is not the case. It
  computes~$s_0:=\Bd^0(M,\bar x,s,q)$ and writes it in binary on an
  own tape. Further, it computes the four
  strings~$0^{|s_0|},0^{|s_0|},0^{|t|},1^{|t_0|}$, each on an own
  tape. We refer to the first two as the {\em binary counters}, to the
  third as the {\em binary clock},  and to the fourth as the
  {\em unary clock}. Here,~$t_0=t_0(M,\bar x,t,s,q)$ is such that
  that~$\S^1_2 $ proves $ \forall v{<}s_0\; \max\{t_\start(M,\bar
  x,s,q,v),t_{\succ}(M,\bar x,s,q,v)\}\leq t_0$; the
  terms~$t_\start$ and~$t_\succ$ witness the explicit
  oracle~$\P$-machines~$M_\start$ and~$M_\succ$ which compute
  the predicate~$\Start$ and  the function~$\succ$. Thus,
  $M_\start$ and~$M_\succ$ run in time and space~$|t_0|$.

  We describe round~$u < t$, which starts with the binary expansion
  of~$u$ written on the binary counter~$0^{|t|}$. When~$u=t$ what we
  describe is the wrap-up phase. The machine~$U$ takes
  exactly~$2|t|+2$ steps to move forth on the counter until the first
  blank cell and then back to cell~$0$. On the pass forth it checks
  whether~$u$ equals the binary expansion of~$t$. It does this by
  moving simultaneously on the input tape which stores~$t$. On the
  pass back it updates the clock to~$0^{|t|}$ in case~$u=t$ and
  to~$u+1$ otherwise.  In the former case,~$U$ halts and accepts or
  rejects as described above. In the latter case,~$U$ computes the
  next configuration of~$M$ as follows.

  Case~$u>0$. The machine~$U$ skips cell~$0$ and fills cell~$i+1$
for~$i=0,1,\ldots,s_0-1$ of the configuration
  tape currently to be filled with the bits describing the successor
  configuration of~$X$, where~$X$ is the configuration stored on the
  other configuration tape.  This is done by first moving one cell
  right to skip cell~$0$, and then using the first binary
  counter~$0^{|s_0|}$ to fill cells~$1,2,\ldots,s_0$ as follows. The
  counter is updated in~$2|s_0|+2$ steps to~$0^{|s_0|}$ when its
  value~$i$ equals~$s_0$, and to $i+1$ otherwise. On the pass forth,
  the value~$i$ is copied  on a work tape. On the pass back, the
  copy tape is erased if~$i$ equals~$s_0$, and otherwise it is kept
  and its head is taken back to cell~$0$. In the former case,~$U$
  takes~$s_0+1$ additional steps to move the head of the configuration
  tape back to cell~$0$. Then it enters the next round~$u+1$ in one
  more step. In the latter case,~$U$ computes the
  predicates~$\Fail(\bar X,X,M,\bar x,s,q)$ and~$\Next(\bar X,X,M,\bar x,s,q,i)$ as described below. If the~$\Fail$ predicate does not hold,
  then~$U$ writes the bit indicating the truth value of the~$\Next$
  predicate on the configuration tape currently to be filled, and
  moves one cell right. If the~$\Fail$ predicate indicates failure,
  then~$U$ writes~$0$, sets the flag bit to~$1$ in the state, and also
  moves one cell right.

The computation of the predicates~$\Fail$ and~$\Next$ on~$i$ is done
by simulating the machine~$M_\succ$ on input~$(M,\bar x,s,q,i)$ 
and oracles~$\bar X,X$ using an extra array of work tapes. For the common
oracles~$\bar X$, it uses the oracle tapes of~$U$. For the oracle~$X$,
the simulation uses one of the work tapes as a virtual oracle tape,
and the relevant configuration tape as a virtual oracle as described
below. In order for subsequent simulations not to disturb one another
we assume that~$M_\succ$ cleans its tapes and moves all heads back to
cell~$0$ before halting. We explain now how one step of~$M_\succ$ is
simulated.

For each step of~$M_\succ$, the machine~$U$ moves one cell right on
the unary clock~$1^{|t_0|}$. When reading blank it takes~$|t_0|+1$
steps back to cell~$0$. When reading~$1$, the next step of~$M_{\succ}$
is carried out using the current contents of the work
tapes.
To simulate the next step,~$U$ needs to answer the potential query
to~$X$, say~$z\in X?$, where~$z$ is the query written on the virtual
oracle tape. This is done at every simulation step as follows: on the
configuration tape holding~$X$, the machine~$U$ moves one cell right
to skip cell~$0$, and then moves forth each step updating the second
binary counter~$0^{|s_0|}$ until it reaches position~$s_0+1$, to
finally move back to cell~$0$ resetting the counter
to~$0^{|s_0|}$. This takes exactly~$1+(2|s_0|+2)s_0+(s_0+1)$ many
steps. Each clock update additionally checks whether its current value
equals~$z$ -- then the scanned bit (i.e., cell~$z+1$ on the tape that
holds~$X$ because we skept cell~$0$) is the answer to the query~$z\in
X?$, which can be recorded in the state. Below we refer to this
exponential but ``simple'' computation that determines the answer to
the query~$z \in X?$ as the \emph{query-resolution computation}.

Case~$u=0$ is similar but instead of simulating~$M_\succ$ the machine
computes the bit indicating the truth value of~$\Start(M,\bar x,s,q,i)$ by
simulating~$M_\start$. Since~$M_\start$ has no oracles, we agree that
the~$z$ in the query~$z\in X?$ is always~$1$ and that the
query-resolution computations are executed nonetheless. This way the
execution of each round~$u<t$ takes exactly the same time,
whether~$u>0$ or~$u=0$. The wrap-up phase where~$u=t$ takes
time~$(2|t|+2)+1$ instead.

\bigskip

\noindent\textbf{Time.}  
Assume~$M$ codes a suitable machine - otherwise~$U$ halts in time
polynomial in~$|M|$.  One step of~$M_\start$ or~$M_\succ$ is simulated
by~$t_1:=1+(2|s_0|+2)s_0+(s_0+1) +1$ steps. Therefore, each simulation
of~$M_\start$ or~$M_\succ$ takes~$t_2:=1+|t_0|\cdot
t_1+(|t_0|+1)$ steps.  To fill one cell of the configuration tape
takes~$t_3:=(2|s_0|+2)+t_2+1$ steps.  Thus, round~$u<t$
takes~$t_4:=(2|t|+2)+1+s_0\cdot t_3+(s_0+1)+1$ steps. The wrap-up
phase where~$u=t$ takes~$q_0:=(2|t|+2)+1$ steps.  The conclusion is
that~$U$ runs in time exactly~$t_5:=r_0+t\cdot t_4+ q_0$ when~$M$
codes a suitable machine, where~$r_0$ is the time-point at which round
0 starts.  Note that~$r_0=r_0(M,\bar x,t,s,q)$ is the time of the
preparation phase. This is an explicit polynomial-time computation,
so~$r_0$ is~$\S^1_2$-provably bounded by a polynomial in~$n$.  Also
the time~$q_0$ of the wrap-up phase is provably bounded by a
polynomial in~$n$, and~$t_4$ is provably bounded by a polynomial
in~$s+n$.  Overall,~$t_5$ is~$\S^1_2$-provably bounded by~$t \cdot
(s+n)^c$ for some~$c \in \N$.

Additionally, we determine the exact time-points of the various phases of the
simulation. Round~$u < t$ starts at time~$r_u:=r_0+u\cdot t_4$.  For
all~$i < s_0$, it starts the simulation that fills cell~$i+1$ of the
configuration tape at time~$r_{u,i}:=r_u+1+i\cdot t_3$.  For
all~$j<|t_0|$, it starts the simulation of the~$j$-th step
of~$M_\start$ or~$M_\succ$ on~$i$ at time~$r_{u,i,j}:=r_{u,i}+j\cdot
t_1$. For all~$p<s_0$, the query-resolution computation for the~$j$-th
step of~$M_\start$ or~$M_\succ$ on~$i$ reads cell~$p+1$ of the tape
that holds~$X$ at time~$r_{u,i,j,p} := r_{u,i,j}+1+(2|s_0|+2)p$. The
wrap-up phase starts at time~$r_t = r_0+t\cdot t_4$.

We verify our time analysis in~$\S^1_2(\alpha)$.  Argue
in~$\S^1_2(\alpha)$ and assume for certain~$t',s',q'$, that~$Z$ is a
time-$t'$ space-$s'$ query-$q'$ computation of~$U^{\bar X}$
on~$(M,\bar x,t,s,q)$.  We want to show that~$\S^1_2(\alpha)$
proves~$\mathit{time}^Z(t')\leq t_5$.  Prove three facts about the
states of the clocks, the counters, the work tapes, and the
configuration tapes, as they appear in the configurations of~$Z$:~(1)
at time~$r_0$ within~$Z$, the two binary counters and the two clocks
are initialized at value~$0$, the rest of work tapes except the
configuration tapes appear clean up to
position~$\max\{|t_0|+1,|s_0|+1\}$, and all heads scan the~$0$
cell;~(2) for all~$u<t$, if at time~$r_u$ within~$Z$ the binary
clock~$0^{|t|}$ has value~$u$, the rest of tapes except the
configuration tapes are in the same state as at time~$r_0$, and all
heads scan cell~$0$, then at time~$u+1$ within~$Z$ the binary
clock~$0^{|t|}$ has value~$u+1$, the rest of tapes except the
configuration tapes are in the same state as at time~$r_0$, and all
heads scan cell~$0$; and~(3) if at time~$r_t$ within~$Z$ the binary
clock has value~$t$, the rest of tapes except the configuration tapes
are in the same state as at time~$r_0$, and all heads scan cell~$0$,
then at time~$t_5 = r_t + q_0$ within~$Z$ the computation is in a
halting state. If we succeed proving these three points, then the
claim follows: Apply~$\Delta^b_1(\alpha)$-induction to points~(1)
and~(2), with~(1) as base case, and~(2) as inductive case, to conclude
the premise of point~(3). Then apply this to point~(3) to get the
conclusion that~$\mathit{time}^Z(t') \leq t_5$.

We prove points (1)-(2)-(3). Points (1) and (3) follow from the fact
that the preparation phase, the update of the binary clock~$0^{|t|}$,
the comparison with the binary expansion of~$t$ from one of the input
tapes, and the wrap-up phase, are all done by explicit polynomial-time
machines with~$\S^1_2$-provable correctness and runtimes. Note also
for this that these machines are run on inputs and work tapes of
length polynomial in~$n$.  To prove point (2), assume~$u<t$ and that
at time~$r_u$ the binary clock~$0^{|t|}$ has value~$u$, the rest of
tapes except the configuration tapes are in the same state as at
time~$r_0$, and all heads scan cell~$0$.  

First prove that~$2|t|+2$ steps after~$r_u$ the counter has
value~$u+1$. Then prove, by $\Delta^b_1(\alpha)$-induction on~$i \leq s_0$,
for fixed~$X$ and~$u$ as parameters where~$X$ is the content of the
relevant configuration tape at time~$r_u$ within~$Z$, that at
time~$r_{u,i}+1$ within~$Z$ the value of the binary
counter~$0^{|s_0|}$ is~$i$ and the head of the other configuration
tape scans its~$i+1$ cell.  To prove this, first prove that~$2|s_0|+2$
steps after~$r_{u,i}+1$ the counter has value~$i+1$ and a copy of~$i$
is kept on a work tape. Then prove, by $\Delta^b_1(\alpha)$-induction on~$j
\leq |t_0|$, for fixed~$X,u,i$ as parameters, that at time~$r_{u,i,j}$
the simulation of~$M_\start$ or~$M_\succ$ on~$(M,\bar x,t,s,q,i)$ with
oracles~$\bar X,X$ is at its~$j$-th configuration, and that~$t_1$
steps later the head on the other configuration tape is moved one cell
right.  To prove this, prove by $\Delta^b_1(\alpha)$-induction on~$p \leq
s_0$, for fixed~$X,u,i,j$ as parameters, that at time~$r_{u,i,j,p}$
the head of the configuration tape that holds~$X$ scans cell~$p+1$,
and also by $\Delta^b_1(\alpha)$-induction on~$p \leq s_0+1$ that at
time~$r_{u,i,j,s_0}+p$ the same head scans cell~$s_0+1-p$. Finally
prove, by $\Delta^b_1(\alpha)$-induction on~$q\leq s_0$, that at
time~$r_{u,i,|t_0|}+q$ the head of the other configuration tape scans
its~$s_0+1-(q+1)$ cell. In particular, by time~$r_{u+1}$ both
configuration tapes have their heads at cell~$0$.  With this conclude
that at time~$r_{u+1}$ the value of the binary clock~$0^{|t|}$
is~$u+1$, all the work tapes except the configuration tapes are at the
state they were at time~$r_0$, and the heads of the configuration
tapes are at cell~$0$, as was to be proved to establish~(2).

For all of these proofs, use the fact that all the counter and clock
updates, copies, comparisons, and resets are implemented with explicit
polynomial-time machines with~$\S^1_2$-provable correctness
and~$\S^1_2$-provable polynomial runtime bounds. Also use the choice
of~$t_0$ and the fact that~$M_\start$ and~$M_\succ$ are explicit
polynomial-time machines and in particular~$\S^1_2$-provably terminate
in time at most~$|t_0|$.  One consequence of this is that the
configurations of the computations of these auxiliary machines can be
represented by numbers of polynomial in~$n$ many bits, so
$\Delta^b_1(\alpha)$-induction suffices to establish the uniqueness of
these computations and therefore that they agree with the ones that
appear within~$Z$.

\bigskip

\noindent\textbf{Space and query.}  The machine~$U$ uses tapes for the preparation, that is,
tapes for storing the clocks and their computations. These are
explicit~$\P$-machines, so their space is~$\S^1_2(\alpha)$-provably
bounded by a polynomial in~$n$. The same holds for the tapes used for
the simulations of~$M_\succ$ and~$M_\start$, and the counter and clock
updates, copies, comparisons, and resets.  The space used on the
configuration tapes is~$s_0+1$ which is bounded by a polynomial
in~$s+n$. Applying~$\Delta^b_1(\alpha)$-induction as above shows that,
at all times, either the flag bit is on or the position of the head
moving on a configuration tape is at most~$s_0+1$, namely one more
than the value of the binary counter~$0^{|s_0|}$. 
The flag bit is stored in the state, and the wrap-up phase does not
use any additional space.

To establish query bounds, observe that the machine~$U$ uses its query
tapes only to simulate~$M^{\bar X,X}_\succ$ on~$(M,\bar x,s,q,i)$, and
only to answer queries to~$\bar X$. By the check-steps 1-4 in the
definition of~$M_{\succ}$, the queries to~$\bar X$ that this machine
asks in step~5 have length at most~$|q|$,
and~$\Delta^b_1(\alpha)$-induction as above shows that, at all times,
either the flag bit is on, or the position of the head moving on a
query tape is less than~$|q|$.

\bigskip

\noindent\textbf{Proofs.}
The above shows (a). We prove (b) and (c). 

\medskip

To prove (c), argue
in~$\S^1_2(\alpha)$ and assume that~$Z$ is an accepting computation
of~$U^{\bar X}$ on~$\bar y$; recall~$\bar y = (M,\bar x,t,s,q)$. The
formula~$G$ is defined such that~$G(Z,\bar y,\cdot)$ is the sequence
of configurations on the relevant configuration tape at times~$r_u$
for~$u\le t$.  To verify that this is a partial time-$t$ space-$s$
query-$q$ computation of~$M^{\bar X}$ on~$\bar x$ we have to verify
that for every~$u<t$ and~$i<s_0$ each cell~$i+1$ of the relevant
configuration tape is filled with the bit indicating the truth value
of~$\Start(M,\bar x,s,q,i)$ if~$u=0$
or~$\Next(\bar X,X,M,\bar x,s,q,i)$ if~$u>0$, where~$X$ is the
configuration on the other configuration tape. We also need to verify,
for~$u>0$, that~$\Fail(\bar X,X,M,\bar x,s,q)$ does not hold.

The latter is easy: otherwise the flag bit would be set to~$1$ at
round~$u$ and, by~$\Delta^b_1(\alpha)$-induction, stay~$1$ until the
end, so~$Z$ would not be accepting.  For the former, we show for
all~$u,i$ that the configurations~$Y_{u,i,j}$ of the execution
of~$M_\start$ or~$M_\succ$ on~$i$ at times~$r_{u,i,j}$ within~$Z$,
with~$j\leq|t_0|$, form a computation, i.e., they are
the~$M_\start$-successors or~$M_\succ$-successors of one another.
Since these machines are polynomial-time, this is expressed by
a~$\Delta^b_1(\alpha)$-formula.  Hence,~$\Delta^b_1(\alpha)$-induction
suffices. We must also verify that the oracle queries are answered
correctly, i.e., that the right cell on the configuration tape
holding~$X$ is retrieved, and that the right bit is read. This is done
as in the time-analysis above: by~$\Delta^b_1(\alpha)$-induction
on~$p\leq s_0$, for fixed~$X,u,i,j$ as parameters, prove that at
time~$r_{u,i,j,p}$ within~$Z$ the head of the configuration tape that
holds~$X$ scans cell~$p+1$, the virtual oracle tape holds the same
value~$z$ as at time~$r_{u,i,j}$, and the tape that holds~$X$ has, at
cell~$z$, the same bit it had at time~$r_{u,i,j}$; in particular, 
this bit is
the bit scanned  at time~$r_{u,i,j,z}$.
%
%
%
This shows that~$G(Z,\bar y,\cdot)$ is a partial time-$t$ space-$s$
query-$q$ computation of~$M$ on~$\bar x$ with oracles~$\bar X$.
That~$F(G(Z,\bar y,\cdot),\bar y,\cdot)$ agrees with~$Z$ follows from
the definition of~$F$ below.

\medskip

To prove (b), argue in~$\S^1_2(\alpha)$ and let~$Y$ be a partial
time-$t$ space-$s$ query-$q$ computation of~$M^{\bar X}$ on~$\bar
x$. The formula~$F$ is defined such that~$F(Y,\bar y,\cdot)$ describes
a computation of~$U^{\bar X}$ on~$\bar y = (M,\bar x,t,s,q)$. We
describe this computation.  For~$u\le t$, the computation has the
configuration~$Y_u$ of~$Y$ written on the appropriate configuration
tape at time-point~$r_u$. For~$i<s_0$ and~$j \leq |t_0|$ it also has
the~$j$-th configuration~$Y_{u,i,j}$ of~$M_\start$
or~$M_\succ^{\bar X,Y_u}$ on~$(M,\bar x,s,q,i)$, at
time~$r_{u,i,j}$.  The configurations of the query-resolution
computations are also added between these: concretely, for
each~$p \leq s_0$ the configuration that has appropriate state and
agrees with the one at time~$r_{u,i,j}$ except that the binary
counter~$0^{|s_0|}$ has value~$p$ and the head of the relevant
configuration tape scans position~$p+1$ is added at
time~$r_{u,i,j,p}$. Additionally, it has the counter and clock
updates, copies, comparisons, and resets placed at the right time
intervals, as well as the polynomial-time preparation phase at the
beginning, and the polynomial-time wrap-up phase at the end.  The bits
of the thus described computation are computable in polynomial-time
with oracle~$Y$, and this is what the formula~$F$ does.

The assumption that~$Y$ is a computation implies for all~$u\leq t$
that~$Y_u=\Start(M,\bar x,s,q,\cdot)$ if~$u=0$
and~$Y_u=\Next(\bar X,Y_{u-1},M,\bar x,s,q,\cdot)$ if~$u>0$, and in
the latter case also that the
predicate~$\Fail(\bar X,Y_{u-1},M,\bar x,s,q)$ does not
hold. Since~$M_\start$ and~$M_\succ$ and the counter and clock
manipulations are polynomial-time machines
with~$\S^1_2(\alpha)$-provable correctness, and the query-resolution
computation is also~$\S^1_2(\alpha)$-provable correct as explained
above, the theory~$\S^1_2(\alpha)$ proves
that~$F(Y,\bar y,\cdot)$ is a computation of~$U^{\bar X}$
on~$\bar y = (M,\bar x,t,s,q)$.

\medskip

Finally, we argue that~$F(G(Z,\bar y,\cdot),\bar y,\cdot)$ agrees
with~$Z$ in~(c). We need that, for explicit polynomial-time machines,
any two computations on the same inputs and the same oracles are the
same. This is a special case of Lemma~\ref{lem:unique} for
polynomial-time machines, for which~$\S^1_2(\alpha)$ suffices. We also
need to prove that every query-resolution computation within~$Z$ is as
described by the formula~$F$. First prove
by~$\Delta^b_1(\alpha)$-induction that at every time~$p$, the head
positions and the content of the binary counter~$0^{|s_0|}$ are as
described by~$F$. For this it does not matter what the heads other
than the one on the binary counter are scanning. Once positions and
states are verified,~$\Delta^b_1(\alpha)$-induction verifies that
within~$Z$ no cell content is altered except on the binary counter.
\end{proof}

\subsection{Universal $\EXP$- and $\PSPACE$-machines}

Since all our applications of~$U$ below will concern explicit~$\EXP$-
or~$\PSPACE$-machines, without oracles, coded by standard~$M \in \N$,
the statement of Theorem~\ref{thm:univ} is more general than needed.
In this section we state the actual lemmas that we need and use as
black-boxes. If not stated otherwise, we understand an explicit
machine to be one with~$k=1$ input tapes and~$\ell=0$ oracle
tapes. Also, when referring to the universal machine~$U$, we mean the
one for~$k=1$ and~$\ell=0$. 

%
 
The machine $M_1$ of the first lemma
is used in our formalizations of $\EXP\not\subseteq\Ppoly$.

\begin{lemma}\label{lem:M1} 
  There is an explicit~$\EXP$-machine~$M_1$ such that for every
  explicit~$\EXP$-machine~$M$ witnessed, say, by the term~$t_M$, there
  are~$\Delta^b_1(\alpha)$-formulas~$F_1,G_1$ such that 
 \begin{enumerate}
 \item[(a)]  $\S^1_2(\alpha)\vdash$ \q{$Y$ is an accepting (rejecting) computation of $M$ on $x$}\\
\hspace*{8ex} $\ \to\q{$F_1(Y,x,\cdot)$ is an accepting (rejecting)  computation  of $M_1$ on $\langle M,x,t_M(x)\rangle$}$,
 \item[(b)] $\S^1_2(\alpha)\vdash$ \q{$Z$ is an accepting (rejecting) computation of $M_1$ on $\langle M,x,t_M(x)\rangle$}\\
\hspace*{8ex} $\ \to\q{$G_1(Z,x,\cdot)$ is an accepting (rejecting) computation  of $M$ on $x$}$.
 \end{enumerate}
\end{lemma}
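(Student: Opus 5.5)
The plan is to build $M_1$ directly from the universal machine $U$ of Theorem~\ref{thm:univ} (with $k=1$, $\ell=0$). On input $w$, $M_1$ first decodes $w=\langle M,x,t\rangle$ in polynomial time; if $w$ is not of this form, it rejects. It then sets $s:=t$ and $q:=t$ and runs $U$ on $\bar y=(M,x,t,t,t)$. The only subtlety is that accepting or rejecting must be transferred from $M$ rather than from the ``no-fail'' flag. We therefore modify the wrap-up phase of $U$, which is a polynomial-time phase: after round $t$ it inspects the state stored on the final configuration tape. If the flag is $0$ and that state is accepting, $M_1$ accepts; otherwise it rejects. All bookkeeping in the proof of Theorem~\ref{thm:univ} is unchanged by this modification, since the wrap-up phase still runs in fixed polynomial time. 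Theorem~\ref{thm:univ}(a) shows that $M_1$ is witnessed by roughly $t\cdot(t+n)^c$, where $n=|\langle M,x,t\rangle|$. Since $t\le w$, this bound is at most $2^{|w|^{c'}}$, which is a term in $w$. Hence $M_1$ is an explicit $\EXP$-machine, with the easy decoding and preparation steps verified in $\S^1_2$.

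Now fix an explicit $\EXP$-machine $M$ witnessed by $t_M$. A halting computation of $M$ on $x$ is by definition a partial time-$t_M(x)$ space-$t_M(x)$ query-$t_M(x)$ computation. For (a), assume $Y$ is such a computation and that it is accepting (resp.\ rejecting). Theorem~\ref{thm:univ}(b) says that $F(Y,\bar y,\cdot)$ is a ``no-fail'' computation of $U$ on $\bar y=(M,x,t_M(x),t_M(x),t_M(x))$. We set $F_1(Y,x,v)$ to be the $\Delta^b_1(\alpha)$-formula that prepends the polynomial-time decoding configurations and appends the modified wrap-up configurations. By Theorem~\ref{thm:univ}(b), the last configuration on the relevant tape is $Y_{t_M(x)}$, whose state is accepting (resp.\ rejecting), so the modified wrap-up accepts (resp.\ rejects). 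For (b), assume $Z$ is an accepting (resp.\ rejecting) computation of $M_1$ on $\langle M,x,t_M(x)\rangle$. Since $M$ is a standard code, $\S^1_2$ verifies that decoding succeeds. We strip the decoding prefix to obtain a computation of the $U$-part and set $G_1:=G$ applied to it. Theorem~\ref{thm:univ}(c), adapted to the modified acceptance condition, shows that $G_1(Z,x,\cdot)$ is a partial time-$t_M(x)$ computation of $M$ on $x$ whose final configuration carries the state read in wrap-up.

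The main obstacle is the rejecting case of (b). A rejecting run of $M_1$ might come from the flag being set to $1$ rather than from $M$ rejecting, and in that case $G(Z,\cdot)$ need not be a computation at all. I would handle this using explicitness of $M$. By $\Delta^b_1(\alpha)$-induction on rounds $u$ (as in Theorem~\ref{thm:univ}(c)), the prefix of configurations before the first failure forms a partial computation of $M$. By the explicitness bounds for $M$, its space and query positions are then below $t_M(x)$ and $|t_M(x)|$. Hence checks 1--3 of $\succ$ never fail at the first failure point, which contradicts the flag being set. So the flag stays $0$, and rejection can only mean that the final state of $M$ is rejecting.

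A second issue in (b) is that the halting time is below $t_M(x)$, so the configuration at time $t_M(x)$ repeats the halting configuration. The time bound from explicitness guarantees that the final state is halting, which gives ``halting computation''.
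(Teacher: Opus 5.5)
Your construction of $M_1$ and your treatment of (a) and of the accepting case of (b) are the same as the paper's. You also correctly identify the real difficulty: a rejecting run of $M_1$ in which $U$'s flag bit gets set. However, the step you use to exclude this case does not go through.

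Let $u$ be the first round in which the flag is set, and let $V=(V_0,\ldots,V_{u-1})$ be the simulated prefix. This is a partial computation of $M$ on $x$ with space parameter $s=t=t_M(x)$. Applied to $V$, explicitness only gives $\textit{space}^V<t_M(x)$, i.e., the heads in $V_0,\ldots,V_{u-1}$ sit at cells $\le t-1$. That already holds by the definition of a partial space-$t$ computation, so it carries no new information. The check that fails is check~3. It concerns the \emph{new} head positions computed from $V_{u-1}$, i.e., a configuration outside $V$. A work head at cell $t-1$ in $V_{u-1}$ that moves right makes $\Fail(V_{u-1},M,x,t,t)$ true, while $\textit{space}^V<t_M(x)$ still holds. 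So ``hence checks 1--3 never fail'' does not follow from the space bound.

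The missing idea, which the paper uses, is to invoke the \emph{time} bound instead. Since $\ell=0$, failure means some work head is at cell $t-1$ in $V_{u-1}$. Heads start at cell $0$ and move at most one cell per step (by $\Delta^b_1(\alpha)$-induction), so $u-1\ge t-1$; together with $u\le t$ this gives $u=t$. The configuration $V_{u-1}$ is not halting, because a halting configuration is its own successor and cannot fail. Hence none of $V_0,\ldots,V_{t-1}$ is halting, so $\textit{time}^V\ge t=t_M(x)$, contradicting that $t_M$ witnesses $M$.

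Alternatively, you could append the offending step and recode the result as a partial computation with space parameter $t+1$, to which the space bound does apply. That recoding must actually be carried out, though, since the coding of configurations depends on $s$. With either repair, your argument matches the paper's proof.
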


\begin{proof} 
  The machine~$M_1$ on~$\langle N,x,t \rangle$ runs the universal
  machine~$U$ on~$(N,x,t,t,t)$; inputs not of this form are
  rejected. If~$U$ rejects, then so does~$M_1$. If~$U$ accepts, then~$M_1$
  accepts if and only if the computation of~$N$ on~$x$ simulated
  by~$U$ is accepting.

  Clearly,~$M_1$ can be defined as an explicit~$\EXP$-machine. To
  define~$F_1,G_1$,  fix first an
  explicit~$\EXP$-machine~$M$, witnessed by~$t_M$. Then we use~$F,G$
  from Theorem~\ref{thm:univ} to define
    $$
  \begin{array}{lll}
   F_1(Y,x,v) &:=&
  H_1\big(F\big(Y,M,x,t_M(x),t_M(x),t_M(x),\cdot\big),x,t_M(x),v\big), \\
   G_1(Z,x,v)& :=&
  G\big(H_0\big(Z,x,t_M(x),\cdot\big),M,x,t_M(x),t_M(x),t_M(x),v\big),
  \end{array}
  $$
  where~$H_1(W,x,t,v)$ is the formula which modifies a computation~$W$
  of~$U$ on~$(M,x,t,t,t)$ into a computation of~$M_1$
  on~$\langle M,x,t\rangle$ by adding the initial extraction
  of~$M,x,t$ from~$\langle M,x,t \rangle$ and adding the final check
  whether the simulated computation of~$M$ on~$x$ is accepting,
  and~$H_0(Z,x,t,v)$ is the formula which extracts from~$Z$ the
  computation of~$U$ on~$(M,x,t,t,t)$.

  We prove (a). Argue in~$\S^1_2(\alpha)$, assume~$Y$ is a halting
  computation of~$M$ on~$x$, and let~$t = t_M(x)$. By the proof of
  Theorem~\ref{thm:univ} (b), setting~$W := F(Y,M,x,t,t,t,\cdot)$ we
  get an accepting computation of~$U$ on~$(M,x,t,t,t)$ whose simulated
  computation of~$M$ on~$x$ is~$Y$. By the definition of~$M_1$ and the
  choice of~$F_1$, we conclude that~$F_1(Y,x,\cdot)$ is a halting
  computation of~$M_1$ on~$\langle M,x,t_M(x)\rangle$ which is
  accepting if and only if~$Y$ is accepting.

  We prove (b). Argue in~$\S^1_2(\alpha)$, assume~$Z$ is a halting
  computation of~$M_1$ on~$\langle M,x,t_M(x) \rangle$, and
  let~$t := t_M(x)$. Set~$W := H_0(Z,x,t,\cdot)$ to get a halting
  computation of~$U$ on~$(M,x,t,t,t)$.
  Set~$Y := G(W,M,x,t,t,t,\cdot)$.  If~$W$ is accepting, then by the
  proof of Theorem~\ref{thm:univ} (c) the set~$Y$ is a partial
  time-$t$ space-$t$ query-$t$ computation of~$M$ on~$x$ which agrees
  with the partial computation of~$M$ as found within~$W$.
  Since~$t = t_M(x)$, this partial computation is halting. By the
  definition of~$M_1$ and the choice of~$G_1$, we conclude
  that~$G_1(Z,x,\cdot)$ is a halting computation of~$M$ on~$x$ which
  is accepting if and only if~$Z$ is accepting.

  We are left to show that~$W$ is not rejecting.  Assume it is. Then
  either~$M$ does not code a suitable machine, which is false, or the
  flag bit within~$W$ is finally~$1$. Binary search gives a
  round~$u \leq t$ such that in round~$u$ the flag bit is updated
  from~$0$ to~$1$. Then~$u > 0$ and there is no~$u' < u$ such that
  round~$u'$ has flag bit~$1$: otherwise binary search would
  find~$u''$ such that~$u' < u'' < u$ where the flag bit changes
  from~$1$ to~$0$, contradicting the working of~$U$. Let~$V$ collect
  the configurations~$V_i$ with~$i < u$ on the configuration tapes
  within~$W$. Then~$V$ is a partial time-$u$ space-$t$ query-$t$
  computation of~$M$ on~$x$. Moreover, the flag bit
  indicates~$\Fail(V_{u-1},M,x,t,t)$. Since~$\ell=0$, this means that,
  according to the definition of~$M_{\succ}$, some head on some work
  tape would be off, i.e., it would visit some cell~$p \geq
  t$. In~$V_{u-1}$ this head's position is less than~$t$.  Hence this
  head has position~$t-1$ in~$V_{u-1}$. Since all positions are less
  than~$u$, this implies~$u=t$. Thus~$\mathit{time}^{V}(u) \geq u$ and
  the partial time-$u$ space-$t$ query-$t$ computation~$V$ contradicts
  the fact that~$t_M$ witnesses~$M$.
\end{proof}

Additionally, we have a formally verified machine~$M^*_1$ which, for each
explicit~$\EXP$-machine~$M$, computes like~$M^*$ from the
introduction. 

\begin{lemma}\label{lem:autoM1}
  There is an
  explicit~$\EXP$-machine~$M^*_1$ such that for every
  explicit~$\EXP$-machine~$M$ witnessed, say, by the term~$t_M$, there
  are~$\Delta^b_1(\alpha)$-formulas~$F_1^*,G_1^*$ such that
  \begin{enumerate} \itemsep=0pt
   \item[(a)]  $\S^1_2(\alpha)\vdash \q{$Y$ is a partial time-$t$ computation of $M$ on $x$} $\\
   \hspace*{8ex} $\  \wedge\ v\in Y \wedge v<\Bd(M,x,t,t_M(x),t_M(x))$ \\
   \hspace*{8ex} $\ \to\ \q{$F_1^*(Y,x,t,v,\cdot)$ is an accepting  computation  of $M^*_1$ on $\langle M, x,t,v,t_M(x)\rangle$}$,
 \item[(b)] $\S^1_2(\alpha)\vdash \q{$Z$ is an accepting computation of $M^*_1$ on $\langle M,x,t,v,t_M(x)\rangle$}$
 \\\hspace*{8ex}  $\ \to\ \q{$G_1^*(Z,x,t,\cdot)$ is a partial time-$t$ computation 
of $M$ on $x$}$ 
 \\\hspace*{8ex}  $\quad\quad  \wedge \ G_1^*(Z,x,t,v) \wedge  v<\Bd(M,x,t,t_M(x),t_M(x))$.
 \end{enumerate}
\end{lemma}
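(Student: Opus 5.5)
We follow the pattern of the proof of Lemma~\ref{lem:M1}. Define $M^*_1$ on input $\langle N,x,t,v,t'\rangle$ (rejecting inputs not of this form) to run the universal machine $U$ from Theorem~\ref{thm:univ} on $(N,x,t,t',t')$. If $U$ rejects, $M^*_1$ rejects. If $U$ accepts, then $M^*_1$ looks up bit $v$ of the simulated partial computation and accepts iff that bit is $1$ and $v<\Bd(N,x,t,t',t')$. The relevant configurations are the ones on the configuration tapes at the times $r_u$, i.e.\ the set $G(W,\ldots,\cdot)$. Since $W$ is not stored as a whole, $M^*_1$ has to recover this bit differently. For $v=\langle u,w\rangle$ with $u\le t$ it records, during round $u$ (or at the appropriate time $r_u$), the content of the relevant cell $w$ of the configuration tape in a dedicated bit of the state, or on a reserved work cell. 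It then carries this bit through to the end.

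This lookup only adds a comparison of the round counter and the cell counter with $u$ and $w$, done in polynomial time per step. Hence $M^*_1$ is again an explicit $\EXP$-machine: its time, space and query bounds are those of $U$ with $t,s,q\le 2^{|\cdot|^c}$, plus polynomial overhead. The formulas are then defined as in Lemma~\ref{lem:M1}:
\[
F^*_1(Y,x,t,v,\cdot):=H^*_1\bigl(F(Y,M,x,t,t_M(x),t_M(x),\cdot),x,t,v,\cdot\bigr)
\]
and
\[
G^*_1(Z,x,t,\cdot):=G\bigl(H^*_0(Z,x,t,\cdot),M,x,t,t_M(x),t_M(x),\cdot\bigr).
\]
Here $H^*_1$ interleaves the computation of $U$ with the polynomial-time bookkeeping for the extracted bit and the final check. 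Conversely, $H^*_0$ strips that bookkeeping off. Both are $\Delta^b_1(\alpha)$ because the exact timing analysis of $U$ makes every bit of the combined computation polynomial-time computable with the given oracle.

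For (a), argue in $\S^1_2(\alpha)$ with $Y$ a partial time-$t$ computation of $M$ on $x$ (space and query $t_M(x)$). Theorem~\ref{thm:univ}(b) makes $W:=F(Y,\ldots)$ an accepting computation of $U$ whose simulated computation agrees with $Y$ below $\Bd$. Then $v\in Y$ and $v<\Bd$ imply that the extracted bit is $1$. Verifying that the recorded bit really is cell $w$ of configuration $u$ uses the same $\Delta^b_1(\alpha)$-induction on time points as in the proof of Theorem~\ref{thm:univ}. So $F^*_1(Y,x,t,v,\cdot)$ is accepting.

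For (b), an accepting $Z$ yields $W:=H^*_0(Z,\ldots)$. This is a halting computation of $U$, and it is accepting because $M^*_1$ accepts only after $U$ accepts. Theorem~\ref{thm:univ}(c) then gives that $G(W,\ldots)$ is a partial time-$t$ space-$t_M(x)$ query-$t_M(x)$ computation of $M$ on $x$. The acceptance condition gives $v<\Bd$ and the recorded bit equal to $1$, which equals $G^*_1(Z,x,t,v)$ by the same bookkeeping correctness.

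The main obstacle is what in Lemma~\ref{lem:M1} was the argument that $W$ is not rejecting. Here it actually turns into the claim that $U$ accepts in (a), i.e.\ that no $\Fail$ flag is raised. This claim comes for free from Theorem~\ref{thm:univ}(b), because the hypothesis already supplies a partial computation with space $t_M(x)$. By contrast, in (b) we never need non-failure beyond $Z$ accepting, so the binary-search argument is avoided. The remaining technical work is the $\S^1_2(\alpha)$-verification that the bit-recording step reads the correct cell at the correct round.
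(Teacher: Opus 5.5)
Your proposal is correct and is essentially the paper's proof. It uses the same machine $M^*_1$: run $U$ on $(N,x,t,s,s)$, latch into a state flag the bit written into the configuration cell indexed by $v=\langle u_v,i_v\rangle$ during round $u_v$, and accept iff $U$ accepts and the flag is $1$. It also uses the same $F^*_1,G^*_1$ obtained by wrapping the $F,G$ of Theorem~\ref{thm:univ}, and the same arguments for (a) via Theorem~\ref{thm:univ}(b) and for (b) via Theorem~\ref{thm:univ}(c).

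The differences are only cosmetic and interchangeable. The paper checks the shape of $v$ (namely $u_v\le t$ and $i_v<\Bd^0$) at the start, whereas you check $v<\Bd$ at the end. In (b), the paper confirms the flag was set at the right moment by a binary search for its $0$-to-$1$ change, whereas you use a $\Delta^b_1(\alpha)$-induction over time.
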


\begin{proof} 
  The machine~$M^*_1$ checks that its input has the
  form~$\langle N,x,t,v,s\rangle$ where~$N$ is a suitable machine
  and~$v=\langle u_v,i_v\rangle$ with~$u_v\le t$ and~$i_v<\Bd^0(N,x,s,s)$,
  and hence~$v<\Bd(N,x,t,s,s)$; inputs not of this form are
  rejected. If not rejected,~$M^*_1$ extracts~$x,u_v,i_v$ from its
  input and runs~$U$ on~$(N,x,t,s,s)$ with the following addition.  It
  uses an additional flag bit stored in its state, initially set
  to~$0$.  When~$U$ updates the clock holding the current value
  of~$u$, it checks whether~$u=u_v$. In case, and when updating the
  counter holding the current value of~$i<s_0$, it checks
  whether~$i=i_v$. In case, it updates the flag bit to the bit written
  into cell~$i_v+1$ of the configuration tape.  In the end,~$M_1^*$
  accepts if and only if~$U$ accepts and this flag bit is~$1$.

  Clearly,~$M_1^*$ is an explicit~$\EXP$-machine. To
  define~$F^*_1,G^*_1$ we  first let~$M$ be an
  explicit~$\EXP$-machine~$M$ witnessed by~$t_M$. Then we
  use~$F,G$ from Theorem~\ref{thm:univ} to define
  $$
  \begin{array}{lll}
     F^*_1(Y,x,t,v,w) &:=&
      H^*_1\big(F\big(Y,M,x,t_M(x),t_M(x),t_M(x),\cdot\big),x,t,v,t_M(x),w\big), \\
     G^*_1(Z,x,t,v)& :=&
      G\big(H_0\big(Z,x,t,t_M(x),\cdot\big),M,x,t_M(x),t_M(x),t_M(x),v\big),
  \end{array}
  $$
  where~$H^*_1(W,x,t,v,s,w)$ is the formula which modifies a
  computation~$W$ of~$U$ on~$(M,x,t,s,s)$ into a computation
  of~$M^*_1$ on~$\langle M,x,t,v,s\rangle$ by adding the initial
  extraction of~$M,x,t,v,s$ from the input, adding also suitable
  counter checks for managing the new flag bit, and adding the final
  acceptance check, and~$H_0(Z,x,t,s,v)$ extracts from~$Z$ the
  computation of~$U$ on~$(M,x,t,s,s)$.

  For (a), argue in~$\S^1_2(\alpha)$ and assume~$Y$ is a partial
  time-$t$ (space-$t_M(x)$ query-$t_M(x)$) computation of~$M$ on~$x$
  with~$v\in Y$ and~$v<\Bd(M,x,t,t_M(x),t_M(x))$.  Then the
  set~$W:=F(Y,M,x,t,t_M(x),t_M(x),\cdot)$ is an accepting computation
  of~$U$ on~$(M,x,t,t_M(x),t_M(x))$ and~$Z:=F_1^*(Y,x,t,v,\cdot)$ is a
  halting computation of~$M^*_1$ on~$\langle M,x,t,v,t_M(x)\rangle$.
  By the proof of Theorem~\ref{thm:univ}, the computation~$Y'$ of~$M$
  on~$x$ simulated within~$W$ agrees with~$Y$
  below~$\Bd(M,x,t,t_M(x),t_M(x))$.
  Since~$v<\Bd(M,x,t,t_M(x),t_M(x))$ and~$v\in Y$ we have~$v\in Y'$.
  This means that~$U$ in round~$u_v$ writes bit 1 into
  cell~$i_v+1$. Hence,~$M^*_1$ sets the flag bit to 1. This implies
  that~$Z$ is accepting.

  For (b), argue in~$\S^1_2(\alpha)$, and let~$Z$ be an accepting
  computation of~$M_1^*$ on~$\langle M,x,t,v,t_M(x)\rangle$. This
  implies~$v<\Bd(M,x,t,t_M(x),t_M(x))$.  Then~$Y:=G_1^*(Z,x,t,\cdot)$
  is the computation of~$M$ on~$x$ simulated by~$U$ within~$Z$.
  As~$Z$ is accepting, the flag bit is finally~$1$. Binary search
  finds the time point where the flag bit is changed from~$0$ to~$1$
  and, by the working of~$M^*_1$, this is happens when~$U$ is in
  round~$u_v$ filling cell~$i_v+1$. Hence this cell gets bit~$1$,
  meaning~$v\in Y$, that is,~$G_1^*(Z,x,t,v)$.
\end{proof}

We state analogous lemmas for~$\PSPACE$.


\begin{lemma}\label{lem:M2} 
  There is an explicit~$\PSPACE$-machine~$M_2$ such that for every
  explicit~$\PSPACE$-machine~$M$ witnessed, say, by the term~$t_M$, there
  are~$\Delta^b_1(\alpha)$-formulas~$F_2,G_2$ such that 
 \begin{enumerate}
 \item[(a)]  $\S^1_2(\alpha)\vdash$ \q{$Y$ is an accepting (rejecting) computation of $M$ on $x$}\\
\hspace*{8ex} $\ \to\q{$F_2(Y,x,\cdot)$ is an accepting (rejecting)  computation  of $M_2$ on $\langle M,x,t_M(x)\rangle$}$,
 \item[(b)] $\S^1_2(\alpha)\vdash$ \q{$Z$ is an accepting (rejecting) computation of $M_2$ on $\langle M,x,t_M(x)\rangle$}\\
\hspace*{8ex} $\ \to\q{$G_2(Z,x,\cdot)$ is an accepting (rejecting) computation  of $M$ on $x$}$.
 \end{enumerate}
\end{lemma}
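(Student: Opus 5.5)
The plan is to mirror the proof of Lemma~\ref{lem:M1}, with the one change that the space parameter passed to the universal machine $U$ is logarithmic. Define $M_2$ on input $\langle N,x,t\rangle$ to run $U$ on $(N,x,t,|t|,t)$, so the simulation has time bound $t$, space bound $|t|$ and query bound $t$; inputs not of this form are rejected. If $U$ rejects, $M_2$ rejects; if $U$ accepts, $M_2$ accepts iff the simulated computation of $N$ on $x$ is accepting.

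First I check that $M_2$ is an explicit $\PSPACE$-machine. By Theorem~\ref{thm:univ}(a), with $\bar y=(N,x,t,|t|,t)$ and $n=|\bar y|$:
\begin{itemize}
\item $U$ has time bound $t\cdot(|t|+n)^c$ and space bound $(|t|+n)^c$;
\item since $n$ is bounded by a polynomial in $|\langle N,x,t\rangle|$, the space is polynomial in the input length, which has the form $|t_{M_2}|$ for a suitable term $t_{M_2}$;
\item the time $t\cdot (|t|+n)^c$ is bounded by a term in $\langle N,x,t\rangle$.
\end{itemize}
The extra parsing and the final acceptance check are explicit $\P$-computations, so $\S^1_2(\alpha)$ proves these bounds.

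Next I define $F_2,G_2$ exactly as $F_1,G_1$, using $F,G$ of Theorem~\ref{thm:univ} with arguments $(M,x,t_M(x),|t_M(x)|,t_M(x))$. Here $H_1$ adds the input extraction and final check to a computation of $U$, and $H_0$ extracts the $U$-computation from one of $M_2$. For an explicit $\PSPACE$-machine $M$ witnessed by $t_M$, a halting computation of $M$ on $x$ is by definition a partial time-$t_M(x)$ space-$|t_M(x)|$ query-$t_M(x)$ computation. Part (a) then follows from Theorem~\ref{thm:univ}(b), and the accept/reject status transfers because the final configuration is copied by $F$ and $G$.

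For (b), the case where the $U$-computation $W$ extracted from $Z$ is accepting follows from Theorem~\ref{thm:univ}(c) as before. The main obstacle is excluding that $W$ is rejecting. I would use the same binary-search argument as in Lemma~\ref{lem:M1}:
\begin{itemize}
\item locate the first round $u$ in which the flag bit becomes $1$;
\item collect the earlier configurations into a partial time-$u$ space-$|t_M(x)|$ query-$t_M(x)$ computation $V$ of $M$ on $x$;
\item observe that $\Fail$ at round $u$ means some work head would reach position $|t_M(x)|$, since there are no oracles and the query check is vacuous;
\item so $\textit{space}^V(u)\ge |t_M(x)|$, contradicting that $t_M$ witnesses $M$ as an explicit $\PSPACE$-machine.
\end{itemize}
This replaces the time-based contradiction of Lemma~\ref{lem:M1} by a space-based one. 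It also needs no uniqueness beyond $\Delta^b_1(\alpha)$-induction, since the space parameter is logarithmic, which is exactly the case Lemma~\ref{lem:unique} covers in $\S^1_2(\alpha)$.
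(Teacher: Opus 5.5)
Your construction of $M_2$ (run $U$ on $(N,x,t,|t|,t)$), the definition of $F_2,G_2$ via $H_1,H_0$, and the explicitness check coincide with the paper. The paper's proof consists only of that remark. The gap is in the one step you spell out beyond it: excluding that $W$ rejects.

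The configurations $V_0,\dots,V_{u-1}$ you collect all have their work heads at cells $<|t_M(x)|$; that is exactly what $\neg\Fail$ on the earlier steps guarantees. The configuration with a head at cell $|t_M(x)|$ is the one that $\Fail$ prevents from being appended. So it is not in $V$, and it cannot be in any space-$|t_M(x)|$ computation. Hence $\textit{space}^V<|t_M(x)|$, which is perfectly consistent with $t_M$ witnessing $M$, and the inference ``so $\textit{space}^V(u)\ge|t_M(x)|$'' does not follow.

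Nor does the time argument of Lemma~\ref{lem:M1} transfer. There $s=t$ forces $u=t$, because a head needs $t-1$ steps to reach cell $t-1$. Here a head reaches cell $|t_M(x)|-1$ after only about $|t_M(x)|$ steps, far below $t_M(x)$.

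The missing idea is to use that explicitness is provable for an arbitrary free space parameter $s$. Re-encode $V_0,\dots,V_{u-1}$ relative to $\Bd^0(M,x,|t_M(x)|+1,t_M(x))$; this adds at most one blank cell and is $\Delta^b_1(\alpha)$-definable. Then append $\Next$ of the re-encoded $V_{u-1}$, computed with space parameter $|t_M(x)|+1$.

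The earlier steps remain $\Next$-steps without $\Fail$ for the larger bound. This follows pointwise from $V$ being a computation, together with an $\S^1_2(\alpha)$-provable fact about how $\Next$ and $\Fail$ depend on $s$ when no head leaves the smaller bound. The last step no longer fails, since the new position $|t_M(x)|$ is below $|t_M(x)|+1$ and there are no oracles.

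This yields a partial time-$u$ space-$(|t_M(x)|+1)$ query-$t_M(x)$ computation of $M$ on $x$ with a work head at cell $|t_M(x)|$. Its space is therefore $\ge|t_M(x)|$, contradicting the explicitness of $M$ instantiated at $s:=|t_M(x)|+1$. With this one-step extension replacing your final bullet, the rest of your argument goes through.
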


\begin{proof} 
Machine~$M_2$ is defined like~$M_1$ but runs the universal machine~$U$
on~$(N,x,t,|t|,t)$ instead of running it on~$(N,x,t,t,t)$.
 \end{proof}

\begin{lemma}\label{lem:autoM2}
  There is an
  explicit~$\PSPACE$-machine~$M^*_2$ such that for every
  explicit~$\PSPACE$-machine~$M$ witnessed, say, by the term~$t_M$,
  there are~$\Delta^b_1(\alpha)$-formulas~$F_2^*,G_2^*$ such that
  \begin{enumerate} \itemsep=0pt
   \item[(a)]  $\S^1_2(\alpha)\vdash \q{$Y$ is a partial time-$t$ computation of $M$ on $x$} $\\
   \hspace*{8ex} $\  \wedge\ v\in Y \wedge v<\Bd(M,x,t,|t_M(x)|,t_M(x))$ \\
   \hspace*{8ex} $\ \to\ \q{$F_2^*(Y,x,t,v,\cdot)$ is an accepting  computation  of $M^*_2$ on $\langle M, x,t,v,t_M(x)\rangle$}$,
 \item[(b)] $\S^1_2(\alpha)\vdash \q{$Z$ is an accepting computation of $M^*_2$ on $\langle M,x,t,v,t_M(x)\rangle$}$
 \\\hspace*{8ex}  $\ \to\ \q{$G_2^*(Z,x,t,\cdot)$ is a partial time-$t$ computation 
of $M$ on $x$}$ 
 \\\hspace*{8ex}  $\quad\quad  \wedge \ G_2^*(Z,x,t,v) \wedge  v<\Bd(M,x,t,|t_M(x)|,t_M(x))$.
 \end{enumerate}
\end{lemma}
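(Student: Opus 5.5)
The plan is to mirror the proof of Lemma~\ref{lem:autoM1} verbatim, replacing every run of the universal machine~$U$ on~$(N,x,t,s,s)$ by a run on~$(N,x,t,|s|,s)$. This is exactly the change that turned Lemma~\ref{lem:M1} into Lemma~\ref{lem:M2}.

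\emph{The machine and its explicitness.} Define~$M^*_2$ to check that its input has the form~$\langle N,x,t,v,s\rangle$, where~$N$ is a suitable machine and~$v=\langle u_v,i_v\rangle$ with~$u_v\le t$ and~$i_v<\Bd^0(N,x,|s|,s)$. It then runs~$U$ on~$(N,x,t,|s|,s)$, maintaining an extra flag bit in its state. When the round counter of~$U$ equals~$u_v$ and the cell counter equals~$i_v$, it copies the bit written into cell~$i_v+1$ of the configuration tape into this flag. It accepts iff~$U$ accepts and the flag is~$1$.

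By Theorem~\ref{thm:univ}(a), $U$ on~$(N,x,t,|s|,s)$ is witnessed by time~$t\cdot(|s|+n)^c$, space~$(|s|+n)^c$ and query~$s$, where~$n$ is the input length. Since~$t$ and~$s$ are components of the input of~$M^*_2$, the time bound is below a term~$t'$ in the input. Moreover~$|s|+n$ is polynomial in the input length, so the space is bounded by~$|t'|$. Hence~$M^*_2$ is an explicit~$\PSPACE$-machine. The preprocessing, the flag bookkeeping and the final check are explicit~$\P$-routines, so they do not affect this.

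\emph{The formulas and part (a).} For a fixed explicit~$\PSPACE$-machine~$M$ witnessed by~$t_M$, set
$$F^*_2(Y,x,t,v,w):=H^*_2\big(F(Y,M,x,t,|t_M(x)|,t_M(x),\cdot),x,t,v,t_M(x),w\big)$$
and
$$G^*_2(Z,x,t,v):=G\big(H_0(Z,x,t,t_M(x),\cdot),M,x,t,|t_M(x)|,t_M(x),v\big).$$
Here~$F,G$ come from Theorem~\ref{thm:univ}. $H^*_2$ wraps a computation of~$U$ into one of~$M^*_2$ by adding input decoding, flag updates and the final check, and~$H_0$ extracts the embedded computation of~$U$. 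All of these are~$\Delta^b_1(\alpha)$, since the wrappers are polynomial-time transformations with oracle access.

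For (a), argue in~$\S^1_2(\alpha)$. Given~$Y$ and~$v\in Y$ with the bound on~$v$, Theorem~\ref{thm:univ}(b) yields an accepting computation~$W$ of~$U$ whose simulated computation agrees with~$Y$ below the bound. Therefore in round~$u_v$ cell~$i_v+1$ receives bit~$1$, the flag becomes~$1$, and~$F^*_2(Y,x,t,v,\cdot)$ is accepting. Verifying the flag bookkeeping inside the wrapper requires only~$\Delta^b_1(\alpha)$-induction.

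\emph{Part (b), the expected main obstacle.} Given an accepting computation~$Z$ of~$M^*_2$, first extract~$W=H_0(Z,\ldots)$, which is an accepting computation of~$U$. Theorem~\ref{thm:univ}(c) then shows that~$G^*_2(Z,x,t,\cdot)$ is a partial time-$t$ space-$|t_M(x)|$ query-$t_M(x)$ computation of~$M$ on~$x$. The input check of~$M^*_2$ gives~$v<\Bd(M,x,t,|t_M(x)|,t_M(x))$. Since~$Z$ accepts, the flag is finally~$1$, and binary search locates the step where it switches from~$0$ to~$1$. By the construction of the machine, this happens exactly when~$U$ writes cell~$i_v+1$ in round~$u_v$, so~$v$ belongs to the extracted computation, that is,~$G^*_2(Z,x,t,v)$ holds.

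The delicate point is making the binary search and the ``only at this moment'' claim work with only~$\Delta^b_1(\alpha)$-induction, as was done in Lemma~\ref{lem:autoM1}. Here the space parameter is~$|t_M(x)|$ rather than~$t_M(x)$. This only shrinks the configurations, so the same~$\Delta^b_1(\alpha)$-inductions, with the time-point computations~$r_{u,i}$ from the proof of Theorem~\ref{thm:univ}, go through unchanged.
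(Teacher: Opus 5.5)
Your proposal is correct and is essentially the paper's proof, which simply repeats the proof of Lemma~\ref{lem:autoM1} with $U$ run on $(N,x,t,|s|,s)$ instead of $(N,x,t,s,s)$, and with the input check using $\Bd^0(N,x,|s|,s)$ instead of $\Bd^0(N,x,s,s)$. Your extra detail is consistent with that proof: you derive explicitness from Theorem~\ref{thm:univ}(a), and you pass $t$ rather than $t_M(x)$ as the time argument of $F$ and $G$, which is what the proof of Lemma~\ref{lem:autoM1} actually uses.
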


\begin{proof} 
  Like Lemma~\ref{lem:autoM1} except that~$M_2^*$
  on~$\langle N,x,t,v,s\rangle$ runs~$U$ on~$(N,x,t,|s|,s)$ with the
  same addition, instead of running it on~$(N,x,t,s,s)$.  Also, the
  bound~$\Bd^0(N,x,s,s)$ used in the initial phase
  becomes~$\Bd^0(N,x,|s|,s)$.
\end{proof}

For the rest of this work we fix {\em any} machines~$M_1$ and~$M_2$
satisfying Lemmas~\ref{lem:M1} and~\ref{lem:M2}.

\section{Formalizations of circuit lower bounds}\label{sec:formalizations}

The $\alpha$- and $\meyer$-formulas from the introduction are defined as follows:

\begin{definition}
Let $c\in\N$ and let $M$ be an explicit $\EXP$- or $\PSPACE$-machine.
\begin{eqnarray*}
\alpha^c_M&:=& \forall n{\in}\Log\  \exists C{<}2^{n^c}\ \forall x{<}2^n\\
&&\quad \big(C( x)=1\ \leftrightarrow \ \exists_2 Y \ \q{$Y$ is an accepting  computation of $M$ on $x$}\big),\\
\meyer^c_M&:=&  \forall n{\in}\Log\  \exists C{<}2^{n^c}\ \forall x{<}2^n\\
&&\quad  \q{$C_{x}(\cdot)$ is a halting computation of $M$ on $x$}.
\end{eqnarray*}
\end{definition}

Both theories~$\{\neg\alpha^c_{M_1}\mid c\in\N\}$
and~$\{\neg\meyer^c_{M_1}\mid c\in\N\}$ are true (in the standard
model) if and only if~$\EXP\not\subseteq\Ppoly$. The same holds
for~$M_2$ and~$\PSPACE$, instead of~$M_1$ and~$\EXP$.  The aim of this
section is two-fold:~(1)~verify over weak theories that these
formalizations do not depend on the particular choices of the
machines~$M_1$ and~$M_2$ in Lemmas~\ref{lem:M1} and~\ref{lem:M2},
and~(2)~verify over weak theories that both formalizations are
equivalent; i.e.:

\begin{theorem} \label{thm:equiv}
For every explicit $\EXP$-machine $M$ and 
every~$c \in \N$ there is~$d \in \N$ such that
\medskip

\begin{tabular}{llllllllllllll}
(a) & $\S^1_2(\alpha) \vdash (\alpha^c_{M_1} \to \alpha^d_M),$ & \hspace{0.5cm} &
(c) & $\T^1_2(\alpha) \vdash (\meyer^c_{M_1} \to \alpha^d_{M_1}),$ \\
(b) & $\S^1_2(\alpha) \vdash (\meyer^c_{M_1} \to \meyer^d_M)$, & &
(d) & $\T^1_2(\alpha) \vdash (\alpha^c_{M_1} \to \meyer^d_{M_1})$.
\end{tabular}
\end{theorem}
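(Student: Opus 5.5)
For (a) and (b), I would reduce $M$ to $M_1$ along the map $x\mapsto\langle M,x,t_M(x)\rangle$, using Lemma~\ref{lem:M1} to transfer computations. For (c) and (d), I would formalize Meyer's argument in $\T^1_2(\alpha)$, where Lemma~\ref{lem:unique} supplies uniqueness of exponential computations.

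\textbf{(a).} Argue in $\S^1_2(\alpha)$ and assume $\alpha^c_{M_1}$. Let $n\in\Log$. The map $x\mapsto\langle M,x,t_M(x)\rangle$ is a $\PV$-function. For $x<2^n$ its value is below $2^{m}$ for some $m=n^{e}$, with $e$ depending only on $M$ and $t_M$; since $t_M(x)\le 2^{n^{c'}}$, the pair is a polynomial-length string. Take a circuit $C<2^{m^c}$ for $M_1$ at length $m$. Lemma~\ref{lem:circuit} gives a circuit computing the pairing map, and composing the two gives a circuit $D<2^{n^d}$. By Lemma~\ref{lem:M1}(a),(b), for each $x<2^n$ we have: there is $Y$ accepting for $M$ on $x$ if and only if there is $Z$ accepting for $M_1$ on $\langle M,x,t_M(x)\rangle$. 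Hence $D(x)=1$ iff $\exists_2 Y$ \q{$Y$ is an accepting computation of $M$ on $x$}. This gives $\alpha^d_M$. One subtlety is lengths: a circuit for length $m$ must handle shorter inputs. I would pad, or use that $x<2^m$ covers all smaller $x$, as the definition with $\forall x{<}2^n$ permits.

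\textbf{(b).} Same reduction. Assume $\meyer^c_{M_1}$ and take $C$ with $C_{\langle M,x,t_M(x)\rangle}(\cdot)$ a halting computation $Z$ of $M_1$. Lemma~\ref{lem:M1}(b) says $G_1(Z,x,\cdot)$ is a halting computation of $M$. Since $G_1$ is $\Delta^b_1(\alpha)$, hence by Remark~\ref{rem:PValpha} a quantifier-free $\PV(\alpha)$-formula with oracle $Z$, I would replace oracle calls to $Z$ by subcircuits for $C$. This yields a polynomial-size circuit $D$ with $D_x(\cdot)=G_1(C_{\langle M,x,t_M(x)\rangle}(\cdot),x,\cdot)$. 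The formal requirement is that $\S^1_2$ provably converts a polynomial-time oracle machine plus an oracle circuit into a circuit. This is a uniform version of Lemma~\ref{lem:circuit} with oracle gates replaced by copies of $C$.

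\textbf{(c), Meyer's argument.} Assume $\meyer^c_{M_1}$ and take $C$ at length $n$. By Lemma~\ref{lem:M1}, $C_x(\cdot)$ is a halting computation $Z_x$ of $M_1$ on $x$. Define the circuit $D(x)$ to output the accept bit of the final configuration of $C_x$, which polynomially many gates read off. I must show $D(x)=1\leftrightarrow\exists_2Y$ accepting. For ($\to$), take $Y=C_x(\cdot)$, available in $\T^1_2(\alpha)$ via Lemma~\ref{lem:CM}-style comprehension or directly as a $\Delta^b_1(\alpha)$-definable set. For ($\leftarrow$), given an accepting $Y$, Lemma~\ref{lem:unique} in $\T^1_2(\alpha)$ shows $Y$ and $C_x(\cdot)$ agree, so $C_x$ is accepting. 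This use of $\Pi^b_1(\alpha)$-induction is exactly why $\T^1_2$ is needed.

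\textbf{(d), the main obstacle.} Assume $\alpha^c_{M_1}$ and apply part (a) to the explicit $\EXP$-machine $M$ = $M^*_1$ composed with $M_1$. More precisely, apply (a) to the machine $M'$ which on $\langle x,v\rangle$ runs $M^*_1$ on $\langle M_1,x,t,v,t_{M_1}(x)\rangle$ with $t=t_{M_1}(x)$. This gives a circuit $C$ with $C(x,v)=1$ iff $\exists_2 Z$ accepting for $M^*_1$, and by Lemma~\ref{lem:autoM1} iff there is some time-$t$ computation $Y$ of $M_1$ on $x$ with $v\in Y$. I then need to show $C_x(\cdot)$ is itself a halting computation of $M_1$ on $x$. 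I expect this to be the hard step, since no computation is given to us a priori. I would prove it by $\Pi^b_1(\alpha)$-induction on $i\le t$ (in $\T^1_2(\alpha)$) that the first $i$ rows of $C_x(\cdot)$ form a partial time-$i$ computation. For the step, the circuit's claims about row $i+1$ must be justified by some existentially given computation $Y$. Uniqueness (Lemma~\ref{lem:unique}) forces $Y$'s first rows to agree with $C_x$, so row $i+1$ of $C_x$ equals $\Next$ of row $i$. The difficulty is the ``$\leftarrow$'' direction: a bit $v\notin C_x$ needs a witness computation showing $v\notin Y$. I would handle this by also querying the complementary bit, which Lemma~\ref{lem:autoM1} supports via configuration coding with both polarities, or by a second machine whose acceptance means the bit is $0$.
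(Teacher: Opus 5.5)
Your parts (a)--(c) follow the paper's proofs (Lemma~\ref{lem:alpha} and Lemma~\ref{lem:gammaalpha}) essentially step for step. Part (d), however, has a genuine gap.

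\textbf{Where (d) breaks.} By fixing $t=t_{M_1}(x)$ you get a circuit with $C(x,v)=1$ iff some \emph{full} computation of $M_1$ on $x$ contains $v$. In $\T^1_2(\alpha)$ nothing guarantees that such a computation exists; that is precisely what $\meyer^d_{M_1}$ is supposed to deliver. If none exists, $C_x(\cdot)$ is empty.

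\begin{itemize}
\item Your induction fails already at the base. To see that row $0$ of $C_x(\cdot)$ is the start configuration, you need $C(x,v)=1$ for each $v$ in it, i.e.\ a full computation containing $v$.
\item The step has the same defect. Uniqueness only shows that bits set in row $i+1$ lie in $\Next$ of row $i$; the converse inclusion again needs a full computation.
\item A complementary-bit machine does not help: without a full computation, both circuits are identically $0$.
\end{itemize}

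Negative bits are in fact not the issue at all. Whenever some computation $Y$ is at hand, $C$ agrees with it exactly: $C(x,v)=1$ yields $v\in Y'$ for some $Y'$, and Lemma~\ref{lem:unique} gives $v\in Y$.

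\textbf{The missing idea} is to keep the time bound as an input of the circuit.
\begin{itemize}
\item Apply $\alpha$ (via Lemma~\ref{lem:alpha}) to $M^*_1$ on $\langle M,x,t,v,t_M(x)\rangle$ for \emph{all} $t\le t_M(x)$. This gives $D(x,t,v)$ which, by the uniqueness argument just described, agrees with every partial time-$t$ computation. This is the paper's $\bar\beta^d_M$; it is universal in the set sort and asserts no existence.
\item Prove the $\Pi^b_1$ claim ``$D(x,t,\cdot)$ is a partial time-$t$ computation of $M$ on $x$'' by induction on $t$.
\item For $t=0$, compare with the set defined by $\Start$, which exists by $\Delta^b_1(\alpha)$-comprehension.
\item For $t+1$: the set $D(x,t,\cdot)$ exists by comprehension and is a partial time-$t$ computation by the induction hypothesis. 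Its one-step extension by $\Next$ is a genuine partial time-$(t+1)$ computation $Z'$; there is no $\Fail$ because $t<t_M(x)$ and $t_M$ witnesses $M$. Then $\bar\beta$ forces $D(x,t+1,\cdot)$ to agree with $Z'$.
\end{itemize}
This manufactures the intermediate computations step by step, which a circuit speaking only about full computations cannot do. Finally, $C(x,v):=D(x,t_M(x),v)$ witnesses $\meyer^d_M$.
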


\noindent Theorem~\ref{thm:equiv2} below states the analogue for $\PSPACE$.
Following the principle that the logically simplest formulas are
preferrable in definitions, we take:

%
 
\begin{definition} $\q{$\EXP\not\subseteq\Ppoly$} := \big\{\neg\meyer^c_{M_1}\mid c\in\N\big\} $ and $
 \q{$\PSPACE\not\subseteq\Ppoly$} := \big\{\neg\meyer^c_{M_2}\mid c\in\N\big\}$.
\end{definition}

We start the proof of Theorem~\ref{thm:equiv} with the proof that the
definition does not depend on the particular choice of~$M_1$.
 
\begin{lemma}\label{lem:alpha} 
For every explicit~$\EXP$-machine~$M$ and every~$c\in\N$ there
is~$d\in\N$ such that
$$
\S^1_2(\alpha) \ \vdash \ (\alpha_{M_1}^c\to\alpha_{M}^d)\  \wedge\  (\meyer_{M_1}^c\to\meyer_{M}^d).
$$ 

\end{lemma}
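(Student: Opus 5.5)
Both implications are proved by pulling back a small circuit for $M_1$ along the reduction $x\mapsto\langle M,x,t_M(x)\rangle$. Lemma~\ref{lem:M1} supplies the translations between computations of $M$ on $x$ and of $M_1$ on $\langle M,x,t_M(x)\rangle$.

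Argue in $\S^1_2(\alpha)$ and fix $n\in\Log$. The map $x\mapsto\langle M,x,t_M(x)\rangle$ is a term. Hence there is a standard $e$ such that for $x<2^n$ the value $\langle M,x,t_M(x)\rangle$ lies below $2^{m}$ with $m:=n^e$, and $m\in\Log$. By Lemma~\ref{lem:circuit} there is a circuit $R<2^{n^{e'}}$ computing this map on all $x<2^n$ (output bitwise, or as a multi-output circuit).

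For the $\alpha$-part, apply $\alpha^c_{M_1}$ at length $m$ to get $C<2^{m^c}$ with
$$C(y)=1\leftrightarrow \exists_2 Z\ \q{$Z$ is an accepting computation of $M_1$ on $y$}.$$
Let $D$ be the composition $C\circ R$, so $D(x)=C(\langle M,x,t_M(x)\rangle)$; its size is polynomial in $n$, so $D<2^{n^d}$ for suitable $d$. Now $D(x)=1$ iff some $Z$ accepts $\langle M,x,t_M(x)\rangle$. Lemma~\ref{lem:M1}(b) turns such a $Z$ into the accepting computation $G_1(Z,x,\cdot)$ of $M$, and Lemma~\ref{lem:M1}(a) turns an accepting $Y$ into $F_1(Y,x,\cdot)$. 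Since $F_1,G_1$ are $\Delta^b_1(\alpha)$, these sets exist by $\Delta^b_1(\alpha)$-comprehension (after bounding via $\Bd$), which witnesses the set quantifiers. This gives $\alpha^d_M$.

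For the $\meyer$-part, take $C<2^{m^c}$ from $\meyer^c_{M_1}$, so that $C_y(\cdot)$ is a halting computation of $M_1$ on $y$ for all $y<2^m$. I need a circuit $D$ with $D_x(\cdot)$ equal to $G_1(C_{\langle M,x,t_M(x)\rangle}(\cdot),x,\cdot)$. By Remark~\ref{rem:PValpha}, $G_1(Z,x,v)$ is equivalent to a quantifier-free $\PV(\alpha)$-formula, i.e.\ a polynomial-time oracle computation in $Z$ on $(x,v)$. Replacing oracle queries $w\in Z$ by $C(\langle M,x,t_M(x)\rangle,w)=1$ gives a polynomial-time computation with inputs $(C,x,v)$. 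Lemma~\ref{lem:circuit} then yields a circuit $E$ for it on inputs below $2^{n'}$, where $n'$ is polynomial in $n$ and bounds $|C|$, $|x|$ and $|v|$ for $v<\Bd_M(x)$. Set $D:=E_C$, so $D_x(v)=E(C,x,v)$, and $D<2^{n^d}$. By Lemma~\ref{lem:M1}(b) applied to $Z=\hat{C}_{\langle M,x,t_M(x)\rangle}$, which is a set in the sense of $\Delta^b_1(\alpha)$-comprehension, the set $D_x(\cdot)$ is a halting computation of $M$ on $x$.

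The main obstacle is the bookkeeping of equivalence outside the relevant bounds. $\q{halting computation}$ only constrains elements below $\Bd$, while $G_1(Z,x,\cdot)$ and $D_x(\cdot)$ might differ elsewhere, or $Z$ might contain junk. The first thing to try is to intersect with $v<\Bd_M(x)$ inside $E$, noting that the first conjunct of the computation formula demands emptiness outside $\Bd^0$ slices. One must also check that the uniform-in-$C$ circuit from Lemma~\ref{lem:circuit} is provably correct, which holds because the oracle-substituted formula is quantifier-free $\PV$.
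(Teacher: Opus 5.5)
Your proposal is correct and follows essentially the same route as the paper: for the $\alpha$-part you compose the $M_1$-circuit at length $m=n^e$ with a small circuit for $x\mapsto\langle M,x,t_M(x)\rangle$ and use Lemma~\ref{lem:M1} plus $\Delta^b_1(\alpha)$-comprehension to transfer the set quantifier, and for the $\meyer$-part you apply Lemma~\ref{lem:circuit} to $G_1(C_{\langle M,x,t_M(x)\rangle}(\cdot),x,v)$ with the $M_1$-circuit as a parameter. The boundary bookkeeping you flag is handled in the paper simply by requiring agreement with $G_1$ for all $v<2^{n^f}$ with $\Bd_M(x)<2^{n^f}$ (circuits output $0$ beyond their input range), so your extra intersection with $v<\Bd_M(x)$ is harmless but unnecessary.
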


\begin{proof} 
We prove each implication for a separate~$d\in\N$; the maximum of the
2 values of~$d$ then witnesses the lemma.

For the first implication, Lemma~\ref{lem:M1}
and~$\Delta^b_1(\alpha)$-comprehension imply that~$\S^1_2(\alpha)$
proves
\begin{equation*}
\begin{split}
& \exists_2 Y\  \q{$Y$ is an accepting  computation of $M$ on $x$}\\
&  \leftrightarrow\ \exists_2 Y\  \q{$Y$ is an accepting  computation of $M_1$ on $\langle M, x,t_M( x)\rangle$}.
\end{split}
\end{equation*}
Argue in~$\S^1_2(\alpha)$ and
assume~$\alpha_{M_1}^c$. Given~$n\in\Log$ choose~$e\in\N$ and
set~$m:=n^e$ so that~$\langle M,x,t_M(x)\rangle<2^{m}$ for
all~$x<2^n$. Apply~$\alpha^c_{M_1}$ for this~$m$ and
get~$C'<2^{m^c}=2^{n^{ec}}$ such that for all~$x<2^n$:
\begin{equation*}
\begin{split}
& C'(\langle M, x,t_M( x)\rangle)=1 \\ 
& \leftrightarrow \exists_2 Y\  \q{$Y$ is an accepting  computation of $M_1$ on $\langle M, x,t_M( x)\rangle$}.
\end{split}
\end{equation*}
By Lemma~\ref{lem:circuit} there are small circuits computing~$\langle
M,x, t_M( x)\rangle$ from~$x$. This gives a circuit~$C$ such
that~$C(x)= C'(\langle M,x, t_M( x)\rangle)$ for
all~$x<2^n$. Clearly,~$C<2^{n^d}$ for suitable~$d\in\N$. This~$C$
witnesses~$\alpha_M^d$ for~$n\in\Log$.

For the second implication, argue in~$\S^1_2(\alpha)$ and
assume~$\meyer_{M_1}^c$.  Given~$n\in\Log$ choose~$e\in\N$ and
set~$m:=n^e$ so that~$\langle M,x,t_{M}(x)\rangle<2^{m}$ for
all~$x<2^n$. Apply~$\meyer_{M_1}$ for this~$m$ and
get~$C'<2^{m^c}=2^{n^{ec}}$ such that for all~$x<2^n$:
\begin{equation*}
 \q{$C'_{\langle M,x,t_{M}( x)\rangle}(\cdot)$ is a halting computation of $M_1$ on $\langle  M, x,t_{M}( x)\rangle$}.
\end{equation*}
Using the formula~$G_1$ from Lemma~\ref{lem:M1}~(b), we have, for
all~$x<2^n$: 
$$
\q{$G_1\big(C'_{\langle M,x,t_{M}(x)\rangle}(\cdot), x,\cdot \big)$ is a halting computation of $M$ on $x$}.
$$
Choose $f\in\N$ so that $\Bd_M(x)<2^{n^f}$ for all $x<2^n$.
By Lemma~\ref{lem:circuit} there is a circuit $C$ such that  for all $x<2^n$ and all $v<2^{n^f}$: 
$$
C(x,v)=1\leftrightarrow G_1\big(C'_{\langle M, x,
  t_{M}(x)\rangle}(\cdot),x,v\big).
$$
Choose $d\in\N$ such that $C<2^{n^d}$. Then $C$ witnesses $\meyer_{M}^d$ for $n$.
\end{proof}

The next lemma states one half of the equivalence of the two
formalizations.  The proof of this is still straightforward but we
need the theory~$\T^1_2(\alpha)$ instead of~$\S^1_2(\alpha)$ due to an
application of Lemma~\ref{lem:unique}.

\begin{lemma}\label{lem:gammaalpha} 
For every explicit $\EXP$-machine $M$ and 
every $c\in\N$ there is
$d\in\N$ such that 
$$
\T^1_2(\alpha)\vdash( \meyer^c_M\to\alpha^c_{M}).
$$
\end{lemma}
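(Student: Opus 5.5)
The plan is to argue in $\T^1_2(\alpha)$, assume $\meyer^c_M$, and fix $n\in\Log$. Then $\meyer^c_M$ gives a circuit $C'<2^{n^c}$ such that for all $x<2^n$ the set $C'_x(\cdot)$ is a halting computation of $M$ on $x$. From $C'$ I will build a circuit $C$ of size comparable to $C'$ that, on input $x$, outputs whether the halting computation $C'_x(\cdot)$ is accepting. Being accepting means the state in the last configuration (index $t_M(x)$) is an accepting state. By the coding conventions of Section~\ref{sec:premachine}, that state is read off from polynomially many bits $\langle t_M(x),\langle i,j\rangle\rangle$ with $i=\max\{s_0,|x|+2,|q_0|\}$ and $j<|M|$. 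So $C$ is obtained from $C'$ by a polynomial-time wiring: compute these positions from $x$, evaluate $C'$ on them, and test membership in the finite set of accepting states of $M$. Lemma~\ref{lem:circuit} handles the position-computing part, giving $C<2^{n^d}$ for suitable $d\in\N$. This is where $d$ enters; the statement should read $\alpha^d_M$, and I expect $d$ slightly larger than $c$ in general.

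Next I verify, for $x<2^n$, that $C(x)=1\leftrightarrow\exists_2 Y\,\q{$Y$ is an accepting computation of $M$ on $x$}$.

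For the forward direction, if $C(x)=1$ then $Y:=C'_x(\cdot)$ is a set in the model. It exists by $\Delta^b_1(\alpha)$-comprehension, since membership is a quantifier-free $\PV$ evaluation bounded by $\Bd_M(x)$, and extensionality identifies it with the circuit-defined set. This $Y$ is a halting computation whose last state is accepting, hence an accepting computation.

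For the backward direction, suppose some $Y$ is an accepting computation of $M$ on $x$. Then $Y$ and $C'_x(\cdot)$ are both partial time-$t_M(x)$ space-$t_M(x)$ query-$t_M(x)$ computations of $M$ on $x$. By Lemma~\ref{lem:unique}, provable in $\T^1_2(\alpha)$, they agree below $\Bd_M(x)$. So the final state of $C'_x(\cdot)$ is accepting, and $C(x)=1$.

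The only nontrivial step is this appeal to Lemma~\ref{lem:unique}, and it is why $\T^1_2(\alpha)$ is needed. The space bound here is $t_M(x)$ rather than $|t_M(x)|$, so the configurations are exponentially long and the induction on $i\le t$ is a genuine $\Pi^b_1(\alpha)$-induction rather than $\Delta^b_1(\alpha)$-induction.

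I also need to check that "halting" in $\meyer$ rules out inconsistencies: the halting state is located at time $t_M(x)$, since computations repeat halting configurations. And since $M$ is explicit, the retrieval of the state from a set coding a configuration is a fixed $\PV(\alpha)$ function, so "accepting" is a $\Delta^b_1(\alpha)$ condition on the final configuration, computable by a small circuit with oracle access replaced by calls to $C'$.
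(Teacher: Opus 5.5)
Your proposal is correct and matches the paper's proof step for step. The paper also applies Lemma~\ref{lem:circuit} to a $\Delta^b_1(\alpha)$-formula that reads the state of configuration $t_M(x)$ of $C'_x(\cdot)$, and it invokes Lemma~\ref{lem:unique} exactly once, which is the only use of $\T^1_2(\alpha)$; it argues the backward direction as the contrapositive of yours ($C(x)=0$ means $C'_x(\cdot)$ is rejecting, so no accepting computation exists). You are right that the conclusion should read $\alpha^d_M$: the paper's statement has a typo there, and so does its proof, which writes $\meyer^d_M$.
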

\begin{proof} 
Argue in~$\T^1_2(\alpha)$ and assume~$\meyer^c_M$. Let~$n\in\Log$ and
choose~$C'$ according to~$\meyer_M^c$ for~$n$.  Let~$A(Y,x)$
be
a~$\Delta^b_1(\alpha)$-formula which states that~$Y$ is accepting: it
applies a~$\PV(\alpha)$-function to retrieve the state of
the~$t_M(x)$-th configuration in~$Y$ and checks that it is an
accepting state of~$M$. The formula does not check that~$Y$ is a
halting computation of~$M$ on~$x$; it assumes that it is and may
behave unexpectedly otherwise.

Lemma~\ref{lem:circuit} gives a~$d\in\N$ and a circuit~$C<2^{n^d}$
such that for all~$x<2^n$:
$$
C(x)=1\leftrightarrow A(C'_x(\cdot),x).
$$
We claim that~$C$ witnesses~$\meyer_M^d$ for~$n$. If~$C(x)=1$,
then~$C'_x(\cdot)$ is an accepting computation of~$M$
on~$x$. If~$C(x)=0$, then~$C'_x(\cdot)$ is not an accepting
computation of~$M$ on~$x$. Since it is a halting computation, it is
rejecting. It exists by~$\Delta_1(\alpha)$-comprehension. By
Lemma~\ref{lem:unique} (here we use~$\T^1_2(\alpha)$), there is no
accepting computation~$Y$ of~$M$ on~$x$.
\end{proof}

We are left to prove the second half of the equivalence of the two
formalizations, i.e., to infer~$\meyer$ from~$\alpha$. This is Meyer's
argument and formalizing it is not trivial. We do it in two steps
passing through an auxiliary formula~$\bar\beta$, which is a variant
of the~$\Pi^{1,b}_1$-formula in the~$\beta$-formalization
from~\cite{abm}. First, we infer~$\bar\beta$ from~$\alpha$, and
then~$\meyer$ from~$\bar \beta$.

An intuitive reason for something
non-trivial going on is that, unlike~$\alpha$, the
formula~$\bar\beta$ is universal in the set sort. In
particular,~$\bar\beta$ does not seem to imply the existence of
exponential-time computations. In contrast,~$\meyer$ straightforwardly
implies the existence of such computations
via~$\Delta^b_1(\alpha)$-comprehension. Again we need~$\T^1_2(\alpha)$
instead of~$\S^1_2(\alpha)$ due to an application of
Lemma~\ref{lem:unique} in the first step, and a key application
of~$\Pi^b_1$-induction in the second step.

\begin{lemma}\label{lem:alphagamma} 
For every explicit $\EXP$-machine $M$ and every $c\in\N$ there is
$d\in\N$ such that
$$
\T^1_2(\alpha)\vdash (\alpha_{M_1}^c\to\meyer_M^d).
$$ 
\end{lemma}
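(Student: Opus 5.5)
Following the plan announced before the statement, I would pass through an auxiliary formula $\bar\beta^d_M$. It should say: for all $n\in\Log$ there is a circuit $D<2^{n^d}$ such that for all $x<2^n$ and every set $Y$, if $Y$ is a partial time-$t$ computation of $M$ on $x$ (for $t\le t_M(x)$), then $Y$ agrees with $D_{x}(\cdot)$ below $\Bd(M,x,t,t_M(x),t_M(x))$. This formula is universal in the set sort.

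\emph{Step 1: from $\alpha^c_{M_1}$ to $\bar\beta$.} I would apply $\alpha^c_{M_1}$ at a polynomially larger length $m$ to inputs of the form $\langle M_1^*,\langle M,x,t,v,t_M(x)\rangle,t_{M_1^*}(\cdot)\rangle$. Here $M^*_1$ is the bit-machine of Lemma~\ref{lem:autoM1}, and Lemma~\ref{lem:M1} transfers computations of $M^*_1$ to computations of $M_1$, exactly as in Lemma~\ref{lem:alpha}. Composing with a small circuit that produces this input from $(x,t,v)$ (Lemma~\ref{lem:circuit}) yields $D$ with
\[
D(x,t,v)=1\leftrightarrow \exists_2 Z\,\q{$Z$ is an accepting computation of $M^*_1$ on $\langle M,x,t,v,t_M(x)\rangle$}.
\]
Now let $Y$ be a partial time-$t$ computation.
\begin{itemize}
\item If $v\in Y$ and $v$ is below the bound, Lemma~\ref{lem:autoM1}(a) gives $D(x,t,v)=1$.
\item If $D(x,t,v)=1$, Lemma~\ref{lem:autoM1}(b) gives a partial time-$t$ computation $G^*_1(Z,x,t,\cdot)$ that contains $v$. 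By Lemma~\ref{lem:unique}, which needs $\T^1_2(\alpha)$, it agrees with $Y$, so $v\in Y$.
\end{itemize}
This gives $\bar\beta$.

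\emph{Step 2: from $\bar\beta$ to $\meyer^d_M$.} Take $D$ from $\bar\beta$ and let $C$ be the circuit with $C(x,v)=D(x,t_M(x),v)$. To show that $C_x(\cdot)$ is a halting computation of $M$ on $x$, I would prove by induction on $t\le t_M(x)$ that $D(x,t,\cdot)$ restricted to the bound is a partial time-$t$ computation. This is $\Pi^b_1$ in the number sort, since $D$ is a number parameter, so $\T^1_2$ suffices.
\begin{itemize}
\item \emph{Base case.} The start configuration exists by $\Delta^b_1(\alpha)$-comprehension and agrees with $D(x,0,\cdot)$ by $\bar\beta$.
\item \emph{Inductive step.} Given that $Y:=D(x,t,\cdot)$ is a partial time-$t$ computation, extend it by $\Next$ to a set $Y'$ via comprehension. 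The extension must not hit $\Fail$; this is guaranteed by explicitness of $M$, i.e.\ the bounds $t_M$, arguing as at the end of the proof of Lemma~\ref{lem:M1}. Then $Y'$ is a partial time-$(t+1)$ computation, so $\bar\beta$ forces $D(x,t+1,\cdot)$ to agree with $Y'$.
\end{itemize}
Taking $t=t_M(x)$ and using explicitness shows the computation is halting.

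\emph{Main obstacle.} I expect the inductive step of Step 2 to be the main difficulty. It requires that the induction formula really is $\Pi^b_1$ with only number parameters, with the universal set quantifier of $\bar\beta$ used only as a hypothesis rather than inside the induction. It also requires managing the bounds $\Bd$ and the no-$\Fail$ condition.
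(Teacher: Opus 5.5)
Your proposal is correct and follows the paper's proof essentially step for step. It uses the same auxiliary formula $\bar\beta$ and proves Step~1 via $M^*_1$, with Lemma~\ref{lem:unique} (the only use of $\T^1_2(\alpha)$ there) giving the converse direction; it proves Step~2 by $\Pi^b_1$-induction on $t$, with $D$ as a number parameter and $\bar\beta$ used only as a side hypothesis. The one difference is that you inline the argument of Lemma~\ref{lem:alpha} to pass from $\alpha^c_{M_1}$ to $M^*_1$, where the paper simply cites it. There is also a small notational slip: in your definition of $\bar\beta$ the set compared with $Y$ should be $D(x,t,\cdot)$, which depends on $t$, rather than $D_x(\cdot)$, and that is how you use it afterwards.
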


\begin{proof} 
For every explicit~$\EXP$-machine~$M$ witnessed, say, by
  term~$t_M$, and every~$c\in\N$ set
\begin{eqnarray*}
\bar\beta^c_{M}&:=&\forall n{\in}\Log\ \exists C{<}2^{n^c}\ \forall
x{<}2^n\ \forall t{\leq}t_M(x)\ \forall_2 Y\\ &&\quad \big(\q{$Y$ is a
  partial time-$t$ computation of~$M$ on~$ x$} \\[-1ex] &&\quad\quad
\ \to \ \forall v{<}\Bd(M,x,t,t_M(x),t_M(x))\ (C(x,t,v) = 1 \leftrightarrow 
v \in Y)\big).
\end{eqnarray*}

The lemma follows from the following two steps.\bigskip

\noindent \textbf{First step.}  For every explicit~$\EXP$-machine~$M$ and
every~$c\in\N$ there is~$d\in\N$ such that
$$
\T^1_2(\alpha)\vdash(\alpha^c_{M_1}\to\bar\beta^d_{M}).
$$ 
Let $t_M$ witness $M$. Argue in~$\T^1_2(\alpha)$ and assume~$\alpha^c_{M_1}$.  Let~$M^*_1$
be a machine
according to Lemma~\ref{lem:autoM1}. This lemma
and~$\Delta^b_1(\alpha)$-comprehension imply for $v<\Bd(M,x,t,t_M(x),t_M(x))$:
\begin{eqnarray*}
&&\exists_2 Y\ \big( \q{$Y$ is a partial time-$t$ 
computation of $M$ on $x$} \wedge v\in Y\big)\\
&&\ \leftrightarrow \ \exists_2 Z\ \q{$Z$ is an accepting computation of 
$M^*_1$ on $\langle M,x,t,v,t_M(x)\rangle$}.
\end{eqnarray*}
By Lemma~\ref{lem:alpha} we have~$\alpha^{e}_{M^*_1}$ for
some~$e\in\N$.  Let~$n\in\Log$. Choose~$f\in\N$ such for all~$x<2^n$
and~$t\le t_M(x)$, and every~$v<\Bd(M,x,t,t_M(x),t_M(x))$ we
have~$\langle M,x,t,v,t_M(x)\rangle<2^{n^f}$.  Set~$m:=n^{f}$.
Choose~$C'<2^{m^{e}}=2^{n^{fe}}$ according to~$\alpha_{M^*_1}^e$
for~$m$. By Lemma~\ref{lem:circuit}, there are~$d\in\N$ and a
circuit~$C<2^{n^d}$ such
that~$C(x,t,v)=C'(\langle M,x,t,v,t_M(x)\rangle)$ for all~$x,t,v$ as
above. Then, for all such $x,t,v$:
$$
C(x,t,v)=1\ \leftrightarrow\ \exists_2 Y\ (\q{$Y$ is a partial time-$t$
computation of $M$ on $x$}\wedge v\in Y ).
$$
To see that~$C$ witnesses~$\bar\beta^d_M$ for~$n$, assume~$Y$ is a
partial time-$t$ computation of~$M$ on~$x$. We claim
$$ 
\forall v{<}\Bd(M,x,t,t_M(x),t_M(x))\ (C(x,t,v)=1 \leftrightarrow v \in Y).
$$ 
Let~$v < \Bd(M,x,t,t_M(x),t_M(x))$.  Clearly,~$v \in Y$
implies~$C(x,t,v)=1$. Conversely,~$C(x,t,v)=1$ implies~$v\in Y'$ for
some partial time-$t$ computation~$Y'$ of~$M$ on~$x$.  By
Lemma~\ref{lem:unique} (here we use~$\T^1_2(\alpha)$), the sets~$Y$
and~$Y'$ agree below~$\Bd(M,x,t,t_M(x),t_M(x))$. Therefore,~$v \in Y$.

\bigskip

\noindent \textbf{Second step.} For 
every explicit $\EXP$-machine $M$ and every $c \in \N$ there is $d\in\N$
such that
$$
\T^1_2(\alpha)\vdash(\bar\beta^c_{M}\to\meyer^d_{M}).
$$ 
Let~$t_{M}$ witness~$M$. Argue in~$\T^1_2(\alpha)$ and
assume~$\bar\beta^c_{M}$. 
Let~$n\in\Log$ and 
choose~$D<2^{n^c}$ according to~$\bar\beta_{M}$ for
this~$n$. 

\medskip

\noindent{\em Claim:} 
$x<2^n \wedge t\le t_M(x) \to \q{$D(x,t,\cdot)$ is a partial
  time-$t$ computation of $M$ on $x$}$.

\medskip

The claim implies the lemma: by Lemma~\ref{lem:circuit} there is
a~$d \in \N$ and a circuit~$C < 2^{n^d}$ such
that~$C(x,v) = D(x,t_M(x),v)$ for~$x<2^n$
and~$v<\Bd(M,x,t_M(x),t_M(x),t_M(x))=\Bd_M(x)$; thus,~$C$
witnesses~$\meyer^d_M$ for~$n$.

We are left to prove the claim. Note that its statement is
a~$\Pi^b_1$-formula. We prove it by induction on~$t$ (this
requires~$\T^1_2(\alpha)$).

For~$t=0$, the set~$Z:=\Start(M,x,t_M(x),t_M(x),\cdot)$ is a partial
time-$0$ computation of~$M$ on~$x$ consisting only of the start
configuration of~$M$ on~$x$.  Note that~$Z$ exists
by~$\Delta^b_1(\alpha)$-comprehension.  By~$\bar\beta^c_{M}$, the
set~$D(x,0,\cdot)$ equals~$Z$ below~$\Bd(M,x,0,t_M(x),t_M(x))$ and
therefore it is a partial time-$0$ computation of~$M$ on~$x$.

Assuming the claim for~$t<t_M(x)$ we verify it
for~$t+1$. Let~$Z:=D(x,t,\cdot)$. By the induction hypothesis the
set~$Z$ is a partial time-$t$ computation of~$M$ on~$x$. Note that~$Z$
exists by~$\Delta^b_1(\alpha)$-comprehension. Let~$Z'$ be the result
of adding one computation step to~$Z$; i.e., the
configuration~$Z_{t+1} := \Next(Z_t,M,x,t_M(x),t_M(x),\cdot)$ is
appended to~$Z$, where~$Z_t$ is the last configuration of~$Z$. Observe
that~$\Fail(Z_t,M,x,t_M(x),t_M(x))$ 
does not hold because the term~$t_M(x)$ witnesses~$M$
and~$t<t_M(x)$. The resulting~$Z'$ is then a partial time-$(t+1)$
computation~$Z'$ of~$M$ on~$x$. 
Note~$Z'$ exists
by~$\Delta^b_1(\alpha)$-comprehension. Now,~$\bar\beta^c_{M}$ implies
that~$D(x,t+1,\cdot)$ equals~$Z'$ below~$\Bd(M,x,t+1,t_M(x),t_M(x))$ and
therefore it is a partial time-$(t+1)$-computation of~$M$ on~$x$.
%
\end{proof}

\begin{proof}[Proof of Theorem~\ref{thm:equiv}.] 
Parts (a) and (b) are Lemma~\ref{lem:alpha}. Parts (c) and (d) are the
special cases of Lemma~\ref{lem:gammaalpha} and
Lemma~\ref{lem:alphagamma} when~$M = M_1$.
\end{proof}

In the analogue of Theorem~\ref{thm:equiv} for~$\PSPACE$ only
the last implication seems to need~$\T^1_2(\alpha)$:

\begin{theorem} \label{thm:equiv2}
For every explicit $\PSPACE$-machine $M$ and 
every~$c \in \N$ there is~$d \in \N$ such that
\medskip

\begin{tabular}{llllllllllllll}
(a) & $\S^1_2(\alpha) \vdash (\alpha^c_{M_2} \to \alpha^d_M),$ & \hspace{0.5cm} &
(c) & $\S^1_2(\alpha) \vdash (\meyer^c_{M_2} \to \alpha^d_{M_2})$, \\
(b) & $\S^1_2(\alpha) \vdash (\meyer^c_{M_2} \to \meyer^d_M),$ & & 
(d) & $\T^1_2(\alpha) \vdash (\alpha^c_{M_2} \to \meyer^d_{M_2})$.
\end{tabular}
\end{theorem}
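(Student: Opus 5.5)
The plan is to mirror the proof of Theorem~\ref{thm:equiv}, replacing $M_1$, $M_1^*$, Lemma~\ref{lem:M1} and Lemma~\ref{lem:autoM1} by $M_2$, $M_2^*$, Lemma~\ref{lem:M2} and Lemma~\ref{lem:autoM2}. The only genuinely new point is that in (c) we can replace $\T^1_2(\alpha)$ by $\S^1_2(\alpha)$.

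\textbf{Parts (a) and (b).} These are the $\PSPACE$-analogue of Lemma~\ref{lem:alpha}, with the same proof. For (a), Lemma~\ref{lem:M2} and $\Delta^b_1(\alpha)$-comprehension give, in $\S^1_2(\alpha)$, that an accepting computation of $M$ on $x$ exists iff one of $M_2$ on $\langle M,x,t_M(x)\rangle$ exists. We apply $\alpha^c_{M_2}$ at length $m:=n^e$ and precompose with a circuit computing $x\mapsto\langle M,x,t_M(x)\rangle$, obtained from Lemma~\ref{lem:circuit}. For (b), we apply $\meyer^c_{M_2}$ at length $m$ and then use the formula $G_2$ of Lemma~\ref{lem:M2}(b). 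Since $G_2(C'_{\langle M,x,t_M(x)\rangle}(\cdot),x,v)$ is $\Delta^b_1$ in the parameter $C'$, hence a quantifier-free $\PV$-formula in $C',x,v$, Lemma~\ref{lem:circuit} turns it into a circuit $C<2^{n^d}$ witnessing $\meyer^d_M$. Nothing here used $\T^1_2(\alpha)$.

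\textbf{Part (d).} This repeats Lemma~\ref{lem:alphagamma} verbatim. We define $\bar\beta^c_M$ with $\Bd(M,x,t,|t_M(x)|,t_M(x))$ in place of the $\EXP$ bound. In the first step we use $M_2^*$ from Lemma~\ref{lem:autoM2} together with the $\PSPACE$-version of (a) applied to $M_2^*$. Lemma~\ref{lem:unique} is then invoked with space $|t_M(x)|$, so its $\S^1_2(\alpha)$ clause already suffices for this step. The second step, however, proves the $\Pi^b_1(\alpha)$-claim ``$D(x,t,\cdot)$ is a partial time-$t$ computation'' by induction on $t\le t_M(x)$. That is ordinary induction up to an exponential bound, so it needs $\T^1_2(\alpha)$, which is why (d) is stated over $\T^1_2(\alpha)$.

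\textbf{Part (c).} We follow Lemma~\ref{lem:gammaalpha} with $M=M_2$. Assume $\meyer^c_{M_2}$, take $C'$ for $n$, and let $C$ decide the $\Delta^b_1(\alpha)$ acceptance test $A(C'_x(\cdot),x)$ via Lemma~\ref{lem:circuit}. If $C(x)=0$, then $C'_x(\cdot)$ is a rejecting halting computation, and it exists by comprehension. It remains to rule out an accepting computation $Y$ of $M_2$ on $x$. This is exactly where Lemma~\ref{lem:gammaalpha} needed $\T^1_2(\alpha)$. Here, however, $M_2$ is an explicit $\PSPACE$-machine, so its computations are partial time-$t$ space-$|t_{M_2}(x)|$ computations. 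The $\S^1_2(\alpha)$ clause of Lemma~\ref{lem:unique} applies because the space parameter has the form $|s|$: each configuration is polynomially bounded, so the equality of the $i$-th configurations is $\Delta^b_1(\alpha)$ and can be proved by $\Delta^b_1(\alpha)$-induction on $i\le t$. Hence $Y$ agrees with $C'_x(\cdot)$, contradicting that the latter is rejecting. So $C$ witnesses $\alpha^{d}_{M_2}$, with $d$ taken from Lemma~\ref{lem:circuit} rather than equal to $c$.

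\textbf{Expected obstacle.} The main care point is to check that the hypotheses of Lemma~\ref{lem:unique} match: the formula \q{$Y$ is an accepting computation of $M_2$ on $x$} for a $\PSPACE$-machine substitutes $|t_{M_2}(x)|$ for $s$, so it is literally an instance of $\Unique(\ldots,t,|s|,q)$. The rest of the argument is routine bookkeeping.
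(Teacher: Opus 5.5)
Your proposal is correct and takes the same route as the paper's proof. Parts (a) and (b) transfer Lemma~\ref{lem:alpha} via Lemma~\ref{lem:M2}; part (c) and the first step of (d) stay within $\S^1_2(\alpha)$ because the space parameter $|t_M(x)|$ places them under the $\S^1_2(\alpha)$ clause of Lemma~\ref{lem:unique}; and only the induction on $t\le t_M(x)$ for the $\Pi^b_1$-claim in the second step of (d) still requires $\T^1_2(\alpha)$.
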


\begin{proof}
  Parts (a) and (b) are proved as in Lemma~\ref{lem:alpha} but for
  explicit~$\PSPACE$-machines~$M$ and~$M_2$ (instead of~$M_1$). The
  proofs use Lemma~\ref{lem:M2} instead of Lemma~\ref{lem:M1}. Part
  (c) is the analogue of Lemma~\ref{lem:gammaalpha} and is proved in
  the same way. That proof required~$\T^1_2(\alpha)$ only in the final
  use of Lemma~\ref{lem:unique}. As stated in Lemma~\ref{lem:unique}
  itself, this works in~$\S^1_2(\alpha)$ for
  explicit~$\PSPACE$-machines. Part (d) is the analogue of
  Lemma~\ref{lem:alphagamma} and is proved in the same way but we
  use~$\Bd(M,x,t,|t_M(x)|,t_M(x))$ instead
  of~$\Bd(M,x,t,t_M(x),t_M(x))$.  That proof required~$\T^1_2(\alpha)$
  in two places: in the application of Lemma~\ref{lem:unique} and in
  the proof of the claim. As in Part (c), the first can be replaced
  by~$\S^1_2(\alpha)$ but the second still needs~$\Pi^b_1$-induction
  and hence~$\T^1_2$.
\end{proof}


%
%

\section{Consistency }\label{sec:cons}

This  section has three subsections. One for the proof of Theorem~\ref{thm:maincon}~(a), one for
Theorem~\ref{thm:maincon}~(b) and one for Theorem~\ref{thm:nbsort}.

\subsection{Consistency of the $\meyer$-formalization}

Theorem~\ref{thm:maincon}~(a) follows from Takeuti's Theorem~\ref{thm:takeuti} and the following.

\begin{theorem}\label{thm:congammaM}
  Assume~$(N,\mathcal Y)\models\S^1_2(\alpha)+\neg\forall\bar
  x\varphi(\bar x)$ where~$\varphi(\bar x)$ is
  a~$\Sigma^{1,b}_1$-formula without free set variables.
\begin{enumerate}\itemsep=0pt
\item[(a)] If $\V^1_2\vdash \varphi(\bar x)$, then $N\models\q{$\EXP\not\subseteq\Ppoly$}.$
\item[(a)] If $\U^1_2\vdash \varphi(\bar x)$, then
$N\models\q{$\PSPACE\not\subseteq\Ppoly$}.$
\end{enumerate}
\end{theorem}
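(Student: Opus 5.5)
We argue by contraposition. Assume $N\models\meyer^c_{M_1}$ for some $c\in\N$; we will show $(N,\mathcal Y)\models\forall\bar x\,\varphi(\bar x)$, contradicting the hypothesis. The main tool is the ``new style'' witnessing theorem for~$\V^1_2$ from~\cite{bb}. By Lemma~\ref{lem:strict} (noting $\U^1_2\subseteq\V^1_2$) we may assume $\varphi=\exists_2 X\,\psi(X,\bar x)$ with $\psi\in\Sigma^{1,b}_0$. The witnessing theorem, in the form we use it, yields an explicit $\EXP$-machine $M$ (taking the tuple $\bar x$ coded as one input) and a $\Delta^b_1(\alpha)$-formula $H$ such that
$$
\S^1_2(\alpha)\vdash \q{$Y$ is a halting computation of $M$ on $\bar x$}\ \to\ \psi(H(Y,\bar x,\cdot),\bar x).
$$
That is, $\S^1_2(\alpha)$ can read off a witness for $\varphi$ from any computation of $M$. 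Since $\psi$ may contain bounded number quantifiers of all levels, this must be arranged with care. We would take the witnessing statement exactly as in~\cite{bb}, if necessary via a Skolemized $\S^1_2(\alpha)$-provable universal implication, and then transfer it to explicit machines in the sense used here, using the universal machine $U$ of Theorem~\ref{thm:univ} to check explicitness. For $\U^1_2$ the same theorem yields an explicit $\PSPACE$-machine.

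Next we produce computations inside $(N,\mathcal Y)$. By Lemma~\ref{lem:alpha}, $\S^1_2(\alpha)\vdash\meyer^c_{M_1}\to\meyer^d_M$ for some $d$. Since $\meyer$ is a number-sort sentence and $(N,\mathcal Y)\models\S^1_2(\alpha)$, we get $N\models\meyer^d_M$. Fix $\bar a\in N$ and choose $n\in\Log$ with $\bar a<2^n$. This gives a circuit $C\in N$ such that $C_{\bar a}(\cdot)$ is a halting computation of $M$ on $\bar a$. We must make sure that this set is available in $\mathcal Y$, since $(N,\mathcal Y)$ is an arbitrary model and not necessarily $(N,\C_N)$. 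It is available: $v\mapsto C(\bar a,v)=1$ is a quantifier-free $\PV$ condition, so $\Delta^b_1(\alpha)$-comprehension in $(N,\mathcal Y)$, applied below $\Bd_M(\bar a)$, produces $Y\in\mathcal Y$. This $Y$ satisfies the $\Pi^b_1(\alpha)$ formula ``$Y$ is a halting computation of $M$ on $\bar a$'', because that formula only refers to $Y$ below $\Bd_M(\bar a)$.

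Finally, the provable implication above gives $(N,\mathcal Y)\models\psi(H(Y,\bar a,\cdot),\bar a)$. Comprehension again places $X:=H(Y,\bar a,\cdot)$, bounded appropriately, in $\mathcal Y$, so $(N,\mathcal Y)\models\varphi(\bar a)$. As $\bar a$ was arbitrary, this is the desired contradiction. Part~(b) is identical, using $M_2$, Theorem~\ref{thm:equiv2}(b) and the $\PSPACE$ version of the witnessing for $\U^1_2$.

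The main obstacle is the first step. The witnessing theorem of~\cite{bb} must be stated precisely so that the implication ``computation of $M$ $\Rightarrow$ witness'' is provable already in $\S^1_2(\alpha)$, not in $\T^1_2(\alpha)$, and so that it applies to $\Sigma^{1,b}_1$ matrices with full $\Sigma^{1,b}_0$ complexity. We also need the resulting machine to be explicit in our sense. We would try first to derive this by adapting the~\cite{bb} proof-tree/computation argument, formalizing each inference's witness update as a polynomial-time step checked by $\Delta^b_1(\alpha)$-induction along the computation.
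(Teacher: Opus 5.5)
Your proposal is correct and follows essentially the paper's route. The paper uses the same ingredients: the \cite{bb} witnessing theorem (Theorem~\ref{thm:wit}), the transfer $\meyer^c_{M_1}\to\meyer^d_M$ from Lemma~\ref{lem:alpha} and Theorem~\ref{thm:equiv2}(b), $\Delta^b_1(\alpha)$-comprehension applied to $C_x(\cdot)$, and Lemma~\ref{lem:strict}. The only differences are that it packages these syntactically as Corollary~\ref{cor:Sigma11cons} ($\S^1_2(\alpha)+\meyer^c_{M_1}\vdash\varphi$) rather than arguing inside the model, and that its Theorem~\ref{thm:wit} already concludes the $\hat\Sigma^{1,b}_1$-formula $\varphi(\bar X,\bar x)$ itself (including $\exists_2 X$) from any halting computation. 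Consequently your explicit witness formula $H$, the second comprehension step, and your worry about re-deriving \cite{bb} are unnecessary; the paper simply cites \cite[Theorems~4.6 and 4.8]{bb}.
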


As outlined in the introduction, the proof  relies on ``new-style'' witnessing theorems from~\cite{bb} (improving \cite{knt}):

\begin{theorem}\label{thm:wit} 
Let $\varphi(\bar X,\bar x)$ be a $\hat\Sigma^{1,b}_1$-formula.
\begin{enumerate}\itemsep=0pt
\item[(a)] If $\V^1_2\vdash \varphi(\bar X,\bar x)$, then there is an explicit oracle
$\EXP$-machine $M$ such that
$$
\S^1_2(\alpha)\vdash \q{$Y$ is a halting computation of $M^{\bar X}$ on $\bar x$} \to \varphi(\bar X,\bar x).
$$
\item[(b)] If $\U^1_2\vdash \varphi(\bar X,\bar x)$, then there is an explicit oracle
$\PSPACE$-machine $M$ such that
$$
\S^1_2(\alpha)\vdash \q{$Y$ is a halting computation of $M^{\bar X}$ on $\bar x$} \to \varphi(\bar X,\bar x).
$$ 
\end{enumerate}
\end{theorem}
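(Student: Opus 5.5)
The plan is to prove both parts by the standard route for witnessing theorems: cut elimination, then induction on the proof. The ``new-style'' feature will come from a different kind of witness. A computation of $M$ will carry, besides the witnessing sets, the complete truth values of every bounded subformula that occurs, evaluated by brute force. A weak theory can then check these values locally.

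\textbf{Proof normal form.} First use Lemma~\ref{lem:strict} to replace $\varphi$ by a strict $\hat\Sigma^{1,b}_1$-formula. Take a free-cut-free sequent-calculus proof in $\V^1_2$ (respectively $\U^1_2$). Formulate comprehension as a rule, and induction either for $\Sigma^{1,b}_0$-formulas (for $\V^1_2$) or as $\Sigma^{1,b}_1$-length-induction (for $\U^1_2$). By the subformula property, every sequent $\Gamma\Rightarrow\Delta$ in the proof consists of $\Sigma^{1,b}_1$- and $\Pi^{1,b}_1$-formulas. We may assume these are in strict form $\exists_2 X\,\psi$ and $\forall_2 X\,\psi$ with $\psi\in\Sigma^{1,b}_0$.

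\textbf{Inductive claim.} By induction on the proof we build, for each sequent, an explicit oracle $\EXP$-machine (respectively $\PSPACE$-machine) $M$. Its oracles are the free set variables together with witnesses for the existential set quantifiers in $\Gamma$. It outputs a witness for some formula of $\Delta$, and the following is provable in $\S^1_2(\alpha)$. If $Y$ is a halting computation of $M$, then the antecedents with the given witnesses imply some succedent with the set $G(Y,\cdot)$ as witness. Here $G$ is a $\Delta^b_1(\alpha)$-extraction formula, and the set exists by $\Delta^b_1(\alpha)$-comprehension.

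The obstacle is that the matrices $\psi$ are arbitrary $\Sigma^b_i(\alpha)$-formulas, whereas $\S^1_2(\alpha)$ can reason only with $\Delta^b_1(\alpha)$-facts. To get around this, the machine also writes into $Y$, for every relevant bounded subformula and every assignment below the (exponential) bounds, a bit recording its truth value. These are computed by exhaustive search, which takes exponential time (respectively polynomial space, reusing space and recording only what is needed). For each quantifier, the machine also records a witness or counterexample pointer. The local Tarski conditions on this table are then $\Delta^b_1(\alpha)$ in $Y$. Consequently, by $\Delta^b_1(\alpha)$-induction on formula depth, $\S^1_2(\alpha)$ proves that the table agrees with actual truth. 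This is the key point: we never need to prove that such computations exist, only that any given one is correct.

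\textbf{The individual rules.} With this in place, the rules are handled as follows. Propositional and cut rules compose machines sequentially, using Theorem~\ref{thm:univ} to simulate subroutines and to move computations in and out. Set-quantifier rules pass witnesses through. The comprehension rule for a $\Sigma^{1,b}_0$-formula (in $\V^1_2$) or a $\Sigma^{1,b}_1$-formula is witnessed by a machine that evaluates the formula at each point below the bound; for $\Sigma^{1,b}_1$ this uses the inductively obtained witnessing machines for the existential set quantifiers. The induction rule is witnessed by iterating the machine for the induction step up to $t$ times, which is exponentially many steps for $\V^1_2$, or $|t|$ times for length-induction in $\U^1_2$, which keeps the machine within polynomial space. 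Correctness of the iterated machine is then shown in $\S^1_2(\alpha)$ by $\Delta^b_1(\alpha)$-induction along the recorded rounds of the given computation $Y$, exactly as in the round-by-round analysis in the proof of Theorem~\ref{thm:univ}.

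\textbf{Main obstacle.} I expect the hardest step to be the induction rule. There we must show that explicitness (the $\S^1_2(\alpha)$-provable time, space and query bounds) is preserved under exponential iteration. We must also carry out the verification without $\Pi^b_1(\alpha)$-induction, so every invariant must be reduced, via the recorded truth tables, to a $\Delta^b_1(\alpha)$ property of $Y$. For the $\PSPACE$ case, the further difficulty is that the recorded data must fit into polynomial space per configuration while still allowing local checking. I would handle this by recording only the current round's witnesses, and let the rounds of the time-indexed computation play the role of the tables.
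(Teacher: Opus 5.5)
The paper does not reprove this theorem. It imports it from Beckmann--Buss (their Theorems~4.8 and~4.6), remarking that their $\U^1_2,\V^1_2$ are axiomatized differently but have the same $\hat\Sigma^{1,b}_1$-consequences. Your core mechanism is the right idea for a new-style witnessing theorem: record truth values and witness pointers, so that $\S^1_2(\alpha)$ only needs $\Delta^b_1(\alpha)$-induction along a given computation.

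\textbf{Main gap: $\Sigma^{1,b}_1$-comprehension.} You work with this paper's $\V^1_2=\T_2(\alpha)+\Sigma^{1,b}_1$-comprehension, so all of its extra strength sits in the comprehension rule, and your treatment of that rule fails.

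Deciding $\varphi(x)=\exists_2Y\,\theta(x,Y)$ at a single point is $\NEXP$-complete in general. Here $Y$ ranges over exponentially long strings, and $\theta$ can say that $Y$ is an accepting computation of a nondeterministic exponential-time machine on $x$. So no explicit $\EXP$-machine outputs $\{x<a\mid\varphi(x)\}$ unless $\NEXP=\EXP$. Yet the premise machine is only guaranteed correct when it is fed exactly this set.

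The ``inductively obtained witnessing machines'' do not help. A machine for a sequent with $\varphi(s)$ in the succedent only produces, under its antecedent, a witness for $\varphi(s)$ \emph{or for some other succedent formula}. It never certifies $\neg\varphi(s)$. Moreover, the sequents arising from the decomposition of the comprehension formula have $X$ free, so their machines already presuppose $X$.

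The repair is exactly the paper's remark. First replace the theories by the Buss-style axiomatization: only $\Sigma^{1,b}_0$-comprehension, plus $\Sigma^{1,b}_1$-induction (respectively length-induction). This has the same $\hat\Sigma^{1,b}_1$-consequences. Comprehension is then handled by your brute-force truth tables. The extra strength moves into the $\Sigma^{1,b}_1$-induction rule, witnessed by the $t$-fold iteration with witness passing that you describe. As written, though, you attached only $\Sigma^{1,b}_0$-induction to $\V^1_2$.

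\textbf{Smaller gap: part (b).} Witness sets are exponentially large. So a $\PSPACE$-machine can neither ``record the current round's witnesses'' nor hand a witness from one subroutine to the next ``sequentially'' in a cut or a length-induction step. Each oracle query to a previously produced witness must be answered by recomputing the subcomputation that produces it. For length-induction this recomputation is nested to depth $|t|$.

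The verification then needs $\S^1_2(\alpha)$-provable uniqueness of all these re-runs, which Lemma~\ref{lem:unique} gives in its space-$|s|$ form. This is what identifies every re-run with the single run from which the extraction formula reads off the witness. Your sketch does not contain this.

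With these two repairs, the outline is a reasonable plan for reproving what the paper simply cites.
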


\begin{proof} We comment on how to transfer the formalism
  from~\cite{bb} to ours. The statements in~(a) and~(b) follow
  from~\cite[Theorem~4.8]{bb} and~\cite[Theorem~4.6]{bb}. The
  conclusions of~(a) and~(b) are weaker than the conclusions stated
  there -- see \cite[Theorem~4.3]{bb}. The definitions
  of~$\U^1_2,\V^1_2$ in~\cite{bb} slightly differ from ours but are
  easily checked to have the same~$\hat\Sigma^{1,b}_1$ consequences.
  Our notions of {\em explicit}~$\EXP$- or~$\PSPACE$-machines can be
  used as a precise version of the more loosely used terms {\em
    explicitly} polynomial-space or exponential-time machines
  in~\cite{bb}.
\end{proof}

Note that every~$\meyer$-formula implies the existence of computations
by~$\Delta^b_1(\alpha)$-comprehension, but only for machines without
oracles. Intuitively, a~$\meyer$-formula enables the capacity of a
weak theory to simulate a strong one, for consequences without set
variables. The following makes this precise.

\begin{corollary} \label{cor:Sigma11cons} Let $c\in\N$ and let $\varphi(\bar x)$ be a $\Sigma^{1,b}_1$-formula without free set variables.
\begin{enumerate}\itemsep=0pt
\item[(a)] If $\V^1_2\vdash\varphi(\bar x)$, then $\S^1_2(\alpha)+\meyer^c_{M_1}\vdash\varphi(\bar x)$.
\item[(b)] If $\U^1_2\vdash\varphi(\bar x)$, then $\S^1_2(\alpha)+\meyer^c_{M_2}\vdash\varphi(\bar x)$.
\end{enumerate}
\end{corollary}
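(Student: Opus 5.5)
We prove (a); part (b) is identical with $\U^1_2$, $M_2$, Lemma~\ref{lem:M2} and Theorem~\ref{thm:wit}~(b) in place of $\V^1_2$, $M_1$, Lemma~\ref{lem:M1} and Theorem~\ref{thm:wit}~(a). The plan has three steps. First reduce to a strict formula. Second, obtain an explicit $\EXP$-machine by witnessing. Third, use $\meyer^c_{M_1}$ to produce a computation of that machine as a set.

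\textbf{Step 1.} By Lemma~\ref{lem:strict}, pick $\hat\varphi\in\hat\Sigma^{1,b}_1$ with $\U^1_2\vdash\varphi\to\hat\varphi$ and $\S^1_2(\alpha)\vdash\hat\varphi\to\varphi$. Since $\V^1_2$ extends $\U^1_2$, we get $\V^1_2\vdash\hat\varphi(\bar x)$. As $\hat\varphi$ implies $\varphi$ in $\S^1_2(\alpha)$, it suffices to derive $\hat\varphi(\bar x)$ in $\S^1_2(\alpha)+\meyer^c_{M_1}$.

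\textbf{Step 2.} Apply Theorem~\ref{thm:wit}~(a) to $\hat\varphi(\bar x)$, which has no set variables, so $\ell=0$. This gives an explicit $\EXP$-machine $M$ with
\[
\S^1_2(\alpha)\vdash \q{$Y$ is a halting computation of $M$ on $\bar x$}\to\hat\varphi(\bar x).
\]
Since $\meyer$ is stated for one-input machines, replace the tuple $\bar x$ by a single code $\langle\bar x\rangle$. Let $M'$ be the machine that decodes its input and runs $M$; it is again an explicit $\EXP$-machine. A computation of $M'$ on $\langle\bar x\rangle$ is $\Delta^b_1(\alpha)$-translatable into a computation of $M$ on $\bar x$, by the same kind of simulation as in Theorem~\ref{thm:univ}. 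So we may assume $k=1$.

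\textbf{Step 3.} Argue in $\S^1_2(\alpha)+\meyer^c_{M_1}$. By Lemma~\ref{lem:alpha}, $\meyer^c_{M_1}$ yields $\meyer^d_M$ for some $d$. Given $x$, choose $n\in\Log$ with $x<2^n$, for instance $n=|x|+2$. Then $\meyer^d_M$ gives a circuit $C$ such that $C_x(\cdot)$ satisfies \q{$C_x(\cdot)$ is a halting computation of $M$ on $x$}. By $\Delta^b_1(\alpha)$-comprehension, bounded by $\Bd_M(x)$, we obtain a set $Y$ agreeing with $C_x(\cdot)$ below $\Bd_M(x)$. This $Y$ is a halting computation, because the defining formula only inspects elements below the bound. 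Hence $\hat\varphi(x)$ holds, and so $\varphi(x)$ holds.

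The main obstacle is the bookkeeping in Step 2: checking that the conclusion of Theorem~\ref{thm:wit} is compatible with the exact \q{halting computation} formula and with the witnessing bound $t_M$ used in $\meyer_M$. It must also hold uniformly in the tuple encoding. I expect these points to be routine given Lemma~\ref{lem:M1} and Theorem~\ref{thm:univ}.
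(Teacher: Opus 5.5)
Your proof is correct and follows the paper's argument. You reduce to a $\hat\Sigma^{1,b}_1$-formula via Lemma~\ref{lem:strict}, obtain a machine $M$ from Theorem~\ref{thm:wit}, pass from $\meyer^c_{M_1}$ to $\meyer^d_M$ via Lemma~\ref{lem:alpha}, and extract the halting computation $C_x(\cdot)$ by $\Delta^b_1(\alpha)$-comprehension, exactly as the paper does. The paper is more economical only in your Step~2. Instead of building a decoding machine $M'$ and verifying a simulation, it applies Theorem~\ref{thm:wit} to $\varphi'(x):=\varphi(\pi_1(x),\ldots,\pi_k(x))$, where the $\pi_i$ are $\PV$-projections; this formula is still $\hat\Sigma^{1,b}_1$, still provable, and equivalent to $\varphi(\bar x)$ under universal closure. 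The witnessing machine therefore has one input tape from the start, and what you called the ``main obstacle'' never arises.
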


\begin{proof} We prove only~(b); the proof of~(a) is analogous. We
  treat strict formulas first. Assume
  that~$\varphi(\bar x)\in\hat\Sigma^{1,b}_1$
  and~$\U^1_2\vdash\varphi(\bar x)$. Say,~$\bar x=(x_1,\ldots,
  x_k)$. Let~$\varphi'(x):=\varphi(\pi_1(x),\ldots,\pi_k(x))$ where
  the~$\pi_i\in\PV$ compute the
  projections~$\pi_i(\langle x_1,\ldots, x_k\rangle)=x_i$.
  Then~$\S^1_2(\alpha)\vdash(\forall
  x\varphi'(x)\leftrightarrow\forall\bar x\varphi(\bar x))$. In
  particular,~$\U^1_2\vdash\varphi'(x)$. Choose~$M$
  for~$\varphi'(x)\in\hat\Sigma^{1,b}_1$ according to
  Theorem~\ref{thm:wit}~(b). Note~$M$ has one input tape and no
  oracles.  By
 Theorem~\ref{thm:equiv2}~(b),~$\S^1_2(\alpha)+\meyer^c_{M_2}\vdash\meyer_{M}^d$
  for some~$d\in\N$.  We
  claim~$\S^1_2(\alpha)+\meyer^d_{M}\vdash \varphi'(x)$.

  Argue in~$\S^1_2(\alpha)+\meyer^d_{M}$ and let~$x$ be
  given. Set~$n:=\max\{|x|,2\}$ and choose~$C$ according
  to~$\meyer_{M}^d$ for~$n$.  Then~$x<2^n$ and~$Y:=C_x(\cdot)$ is a
  halting computation of~$M$ on~$x$.  Note~$Y$ exists
  by~$\Delta^b_1(\alpha)$-comprehension. Then get~$\varphi'(x)$ by
  Theorem~\ref{thm:wit}~(b).

Non-strict formulas are handled with Lemma~\ref{lem:strict}:
 assume~$\varphi(\bar  x)\in\Sigma^{1,b}_1$ and~$\U^1_2\vdash\varphi(\bar x)$; 
 then~$\U^1_2\vdash\hat\varphi(\bar x)$ and $\hat\varphi\in\hat\Sigma^{1,b}_1$; hence,~$\S^1_2(\alpha)+\meyer^c_{M_2}\vdash\hat\varphi(\bar x)$, so~$\S^1_2(\alpha)+\meyer^c_{M_2}\vdash\varphi(\bar x)$.
%
 \end{proof}
 
 \begin{remark} We do not need the full strength of
   Theorem~\ref{thm:wit}~(a): we can replace~$\S^1_2(\alpha)$
   by~$\U^1_2$ and then argue for Corollary~\ref{cor:Sigma11cons}~(a)
   as follows. From~$\V^1_2\vdash\varphi(\bar x)$
   infer~$\U^1_2\vdash(\meyer^c_{M_1}\to\varphi(\bar x))$ as seen. But
   this is logically equivalent to a~$\Sigma^{1,b}_1$ formula, still
   without free set variables. By
   Lemma~\ref{lem:alpha},~$\S^1_2(\alpha)+\meyer^c_{M_1}\vdash\meyer_{M_2}^d$
   for some~$d\in\N$.  By
   Corollary~\ref{cor:Sigma11cons}~(b),~$\S^1_2(\alpha)+\meyer^c_{M_2}\vdash(\meyer^c_{M_1}\to\varphi(\bar
   x))$. Hence~$\S^1_2(\alpha)+\meyer^c_{M_1}\vdash \varphi(\bar x)$.
  \end{remark}

\begin{proof}[Proof of Theorem~\ref{thm:congammaM}] 
  We prove only (a); the proof of (b) is analogous.
  If~$\varphi(\bar x)$ and~$(N,\mathcal Y)$ are as stated, (a) of the
  corollary implies~$(N,\mathcal Y)\models\neg\meyer^c_{M_1}$ for
  all~$c\in\N$.
\end{proof}


\subsection{Consistency of the $\alpha$-formalization}

While we do not know whether Lemma~\ref{lem:alphagamma} holds
for~$\S^1_2(\alpha)$, we can show a weakening:

\begin{lemma}\label{lem:alphagammaproof}\
For every $c \in \N$ there is $d \in \N$ such that
\begin{enumerate}\itemsep=0pt
\item[(a)] if $\S^1_2(\alpha)\vdash\alpha^c_{M_1}$, then $\S^1_2\vdash\meyer^d_{M_1}$,
\item[(b)]  if $\S^1_2(\alpha)\vdash\alpha^c_{M_2}$, then $\S^1_2\vdash\meyer^d_{M_2}$.
\end{enumerate}
\end{lemma}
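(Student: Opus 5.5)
The plan is to argue model-theoretically, using ``simulating comprehension''. I only do (a); (b) is the same with $M_2$, Lemma~\ref{lem:M2}, Lemma~\ref{lem:autoM2} and Theorem~\ref{thm:equiv2} in place of their $\EXP$ counterparts. Suppose $\S^1_2(\alpha)\vdash\alpha^c_{M_1}$, and let $N\models\S^1_2$ be arbitrary. By Lemma~\ref{lem:CM}, $(N,\C_N)\models\S^1_2(\alpha)$, so $(N,\C_N)\models\alpha^c_{M_1}$. Since $\meyer^d_{M_1}$ is a number-sort sentence, $(N,\C_N)\models\meyer^d_{M_1}$ iff $N\models\meyer^d_{M_1}$. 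By completeness it therefore suffices to find one $d$, depending only on $c$, such that $(N,\C_N)\models\meyer^d_{M_1}$ for every such $N$.

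\textbf{Step 1 (reuse the $\T^1_2(\alpha)$ argument).} I follow the proof of Lemma~\ref{lem:alphagamma} inside $(N,\C_N)$. The first step there, $\alpha^c_{M_1}\to\bar\beta^{d'}_M$, used $\T^1_2(\alpha)$ only through Lemma~\ref{lem:unique}. The second step, $\bar\beta\to\meyer$, used $\T^1_2(\alpha)$ only in one $\Pi^b_1(\alpha)$-induction on $t$, for the claim that $D(x,t,\cdot)$ is a partial time-$t$ computation, where $D$ is a circuit parameter. So it suffices to verify these specific $\Pi^b_1(\alpha)$-induction instances in $(N,\C_N)$. By the $*$-translation of Lemma~\ref{lem:CM}, each of them is a $\Pi^b_1$-induction instance in $N$ with circuit parameters, of the form: ``$\forall v{<}b\ \psi(D,x,t,v)$'' with $\psi$ quantifier-free in $\PV$.

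\textbf{Step 2 (replace the $\Pi^b_1$ predicate by a circuit).} Let $M'$ be an explicit $\EXP$-machine that, on input $\langle D,x,t,b\rangle$, brute-forces over all $v<b$ and accepts iff the quantifier-free matrix holds for all of them. This machine should be taken from the formally verified model-checkers of \cite{abm}. Since $(N,\C_N)\models\alpha^c_{M_1}$, Lemma~\ref{lem:alpha} gives $\alpha^{e}_{M'}$ in $(N,\C_N)$. Hence for the relevant length there is a circuit $E$ such that $E(\langle D,x,t,b\rangle)=1$ iff an accepting computation of $M'$ exists in $\C_N$.

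Apply the same reasoning to the complementary machine to get a circuit $E'$ for rejecting computations. The aim is to prove in $(N,\C_N)$ that
\[
E(\langle D,x,t,b\rangle)=1\ \leftrightarrow\ \forall v{<}b\ \psi(D,x,t,v).
\]
Once this equivalence is available for the parameters in question, the required $\Pi^b_1$-induction becomes induction for the quantifier-free $\PV$-formula $E(\cdots)=1$. That induction holds in $N$ by Remark~\ref{rem:PV}, and Step~1 then goes through, giving $\meyer^d_{M_1}$ with $d$ determined by $c$ and the fixed machines.

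\textbf{Main obstacle.} The hard part is establishing this equivalence without already having the existence of $\EXP$-computations. The direction ``an accepting computation exists $\Rightarrow$ the $\Pi^b_1$ property holds'' follows from the $\S^1_2(\alpha)$-verified correctness of the checker, via the argument of Lemma~\ref{lem:M1}(b) and Theorem~\ref{thm:univ}(c). The converse needs that, when the property holds, some accepting computation lies in $\C_N$.

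My first attempt at the converse is to get the computation from the circuit of Lemma~\ref{lem:autoM1} and $\bar\beta$ for $M'$ itself. I would then check, by $\Delta^b_1(\alpha)$-induction in the style of Theorem~\ref{thm:univ}, that the sequence of configurations it describes is a genuine computation. The point is that the checker's configurations are uniform in $v$ and only carry polynomial-size information beyond the counter, so the induction is on short quantifier-free statements. If this fails, the fallback is a least-counterexample argument: use binary search, available in $\S^1_2$, on the circuit $E'$ to locate the first $t$ at which the Step~1 claim breaks, and derive a contradiction from the verified behaviour of the checker at that $t$.
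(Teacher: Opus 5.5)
Your reduction matches the paper's. You pass to $(N,\C_N)$, observe that running Lemma~\ref{lem:alphagamma} there only lacks $\Pi^b_1(\alpha)$-induction, and simulate that induction: a circuit supplied by an $\alpha$-formula replaces the $\Pi^b_1$ predicate, turning the instance into quantifier-free induction in $N$. The paper does this for all instances at once in Lemma~\ref{lem:CMT12}, after passing from $\alpha^c_{M_1}$ to $\alpha^e_{M_2}$ by Lemma~\ref{lem:alpha}, and then uses Lemma~\ref{lem:alphagamma} as a black box; your instance-by-instance version would be fine.

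The gap is the step you call the main obstacle: showing that $E$ really decides the $\Pi^b_1$ predicate, i.e.\ that an accepting computation of the checker lies in $\C_N$ whenever the predicate holds. That step is the whole content of the lemma, and neither of your suggestions establishes it.
\begin{itemize}
\item The route through $\bar\beta$ for $M'$ is circular. $\bar\beta$ is universal in the set sort, so its circuit is only known to agree with partial computations that already exist. Extracting an actual computation from it is exactly the second step of Lemma~\ref{lem:alphagamma}, a $\Pi^b_1$-induction on $t$. That induction is not on short statements even for a $\PSPACE$-checker: \q{$D(x,t,\cdot)$ is a partial time-$t$ computation} contains the non-sharply bounded quantifier $\forall i{\le}t$. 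This is why Theorem~\ref{thm:equiv2}(d) still needs $\T^1_2(\alpha)$.
\item The binary-search fallback is also circular. Locating with $E$ or $E'$ the first $t$ where the claim fails presupposes that these circuits already decide the claim.
\end{itemize}

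The missing ingredient is part (b) of the formally verified model-checker, Lemma~\ref{lem:mc}. For a $\Pi^b_1$-formula $\varphi$, $\S^1_2(\alpha)$ proves that if $\varphi(\bar x)$ holds, then $C_\varphi(\bar x,\cdot)$ is an accepting computation of $M_\varphi$ on $\bar x$, where $C_\varphi$ is a $\Delta^b_1(\alpha)$-formula. This is the precise form of your remark that the checker's configurations are uniform in $v$. If every $v$ passes, the configuration at each time step is polynomial-time computable from that time step. So the accepting computation is given explicitly and exists by $\Delta^b_1(\alpha)$-comprehension, which holds in $(N,\C_N)$ by Lemma~\ref{lem:CM}. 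This needs no circuit from $\bar\beta$, no induction, and no rejecting computations, so your $E'$ is unnecessary.

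Together with Lemma~\ref{lem:mc}(a), this makes $\exists_2 Y$ \q{$Y$ is an accepting computation of $M_\chi$ on $\langle\bar z,\bar x,y\rangle$} equivalent in $(N,\C_N)$ to the $\Pi^b_1$ predicate. The circuit given by the $\alpha$-formula for $M_\chi$ then decides the predicate. That $\alpha$-formula is available from $\alpha^e_{M_2}$ via Theorem~\ref{thm:equiv2}(a), or from $\alpha^c_{M_1}$ via Lemma~\ref{lem:alpha}. The induction instance thus reduces to quantifier-free induction in $N$.
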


Before we prove it, we argue that this implies our main consistency result:

\begin{proof}[Proof of Theorem~\ref{thm:maincon}.] As already stated,
  Part~(a) follows from Theorems~\ref{thm:takeuti}
  and~\ref{thm:congammaM}. For Part~(b),
  assume~$\S^1_2(\alpha)\cup\{\neg\alpha^c_{M_1}\mid c\in\N\}$ is
  inconsistent. By compactness,~$\S^1_2(\alpha)\vdash \alpha^c_{M_1}$
  for some~$c\in\N$. By Part~(a) of the
  lemma,~$\S^1_2\vdash \meyer^d_{M_1}$ for some~$d\in\N$. This
  contradicts Part~(a) of the theorem, which we just proved.
\end{proof}

The proof of Lemma~\ref{lem:alphagammaproof} is by analysis of the
models~$(N,\C_N)$ from Lemma~\ref{lem:CM} and relies on formally
verified model-checkers; concretely, the proof uses the following weak
version of~\cite[Lemma~20]{abm}:

\begin{lemma}\label{lem:mc} 
  For every~$\Pi^{b}_1$-formula~$\varphi(\bar x)$ there is an
  explicit~$\PSPACE$-machine~$M_\varphi$ and
  a~$\Delta^b_1(\alpha)$-formula~$C_\varphi(\bar x,v)$ such that
\begin{enumerate}\itemsep=0pt
\item[(a)] $\S^1_2(\alpha)\vdash \q{$Y$ is an accepting computation of $M_\varphi$ on $\bar x$}
\ \to\ \varphi(\bar x)$,
\item[(b)] $\S^1_2(\alpha)\vdash  \varphi(\bar x)\ \to\ \q{$C_\varphi(\bar x,\cdot)$ is an accepting computation of $M_\varphi$ on $\bar x$}$. 
\end{enumerate} 
\end{lemma}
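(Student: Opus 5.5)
The plan is to build $M_\varphi$ as a brute-force search over the universally quantified variable, and to lay it out with the same ``exact timing'' discipline as the universal machine $U$ of Theorem~\ref{thm:univ}. That way both directions reduce to $\Delta^b_1(\alpha)$-reasoning about individual, polynomial-time computable configurations.

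\textbf{Normal form and the machine.} By Remark~\ref{rem:PV} and prenex manipulation, $\varphi(\bar x)$ is $\S^1_2$-provably equivalent to $\forall y{<}t(\bar x)\,F(\bar x,y)$, with $t$ a term and $F$ quantifier-free $\PV$. Fix an explicit $\P$-machine $M_F$ deciding $F$, witnessed by $t_F$; pad it so that it cleans its tapes and halts after exactly $|t_F(\bar x,y)|$ steps, which we may take to be a fixed term in $\bar x$ for $y<t(\bar x)$. The machine $M_\varphi$ works as follows.
\begin{enumerate}\itemsep=0pt
\item A polynomial preparation phase writes $t(\bar x)$ and a binary counter $0^{|t(\bar x)|}$.
\item It then runs rounds $u=0,1,\ldots,t(\bar x)-1$. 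Each round takes the same number $t_4$ of steps: update the counter, then simulate $M_F$ on $(\bar x,u)$.
\item If $M_F$ rejects in some round, $M_\varphi$ enters a rejecting halting state; otherwise it accepts after the last round.
\end{enumerate}
The space used is polynomial in $|\bar x|$ and the running time is $r_0+t\cdot t_4+q_0$. Explicitness, with witnessing term $t_{M_\varphi}$ of the required $\PSPACE$ shape, is verified exactly as in the time analysis of Theorem~\ref{thm:univ}. One proves by $\Delta^b_1(\alpha)$-induction on $u$ that the configuration at time $r_u$ has counter value $u$ and clean auxiliary tapes. This is $\Delta^b_1(\alpha)$ because configurations have polynomially many bits.

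\textbf{Part (b).} Let $C_\varphi(\bar x,v)$, with $v=\langle i,w\rangle$, compute bit $w$ of the $i$-th configuration of the intended run. Given $i$, the formula decides in polynomial time whether $i$ lies in the preparation phase, in some round (computing $u=\lfloor(i-r_0)/t_4\rfloor$ and the offset), or in the wrap-up phase. It then reconstructs the configuration by running the relevant polynomial-time subcomputation (the counter update, or $M_F$ on $(\bar x,u)$) for the offset number of steps. Checking that $C_\varphi(\bar x,\cdot)$ is a computation is a universal statement over $i$. Each instance is a local $\Delta^b_1(\alpha)$ check that configuration $i+1$ is the $\Next$ of configuration $i$ and that $\Fail$ does not hold; these checks follow from the provable correctness of the explicit $\P$-machines involved. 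The only place where $\varphi$ is used is the step ending round $u$: there we need that $M_F$ accepts on $(\bar x,u)$, and this follows from $F(\bar x,u)$. Acceptance at time $t_{M_\varphi}(\bar x)$ follows in the same way.

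\textbf{Part (a).} Assume $Y$ is an accepting computation and, towards a contradiction, that $\neg F(\bar x,y_0)$ for some $y_0<t(\bar x)$. Fix a time $i$ at which $Y$ is already halting, in particular the final time $t_{M_\varphi}(\bar x)$. By $\Delta^b_1(\alpha)$-induction on $u\le y_0$, the configuration of $Y$ at time $r_u$ is either halting-rejecting or has counter value $u$ and clean tapes; moreover halting-rejecting configurations persist, since halting configurations repeat. At round $y_0$, uniqueness of polynomial-time subcomputations (Lemma~\ref{lem:unique} with small space, available in $\S^1_2(\alpha)$) shows that the simulation of $M_F$ inside $Y$ is the true run, which rejects. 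So $Y$ reaches a rejecting state, and by persistence its last configuration is rejecting, contradicting acceptance.

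The main obstacle is making the configuration at time $i$ polynomial-time computable from $i$, which is needed for (b) to be $\Delta^b_1(\alpha)$. This is why every round must take exactly $t_4$ steps, independent of the value of $F$. I would handle it as in $U$: pad $M_F$ to a fixed running time, and record ``$F$ failed'' in a flag bit in the state, rejecting only at the end. With this change the run is fully uniform, and (a) becomes the flag-bit binary-search argument already used in the proof of Lemma~\ref{lem:M1}.
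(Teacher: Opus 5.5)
Your argument is correct, but it does not follow the paper. The paper gives no proof of this lemma; it imports it as a weak version of the formally verified model-checkers of \cite[Lemma~20]{abm}. You instead build the $\Pi^b_1$ case by hand: a padded brute-force loop over the single universal quantifier, laid out with the exact-timing discipline of the universal machine $U$ from Theorem~\ref{thm:univ}.

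Your route is self-contained and shows why the restriction to $\Pi^b_1$ matters. When $\varphi(\bar x)$ holds, no round fails, so the intended run is uniform and its $i$-th configuration is polynomial-time computable from $i$. This is exactly what makes $C_\varphi$ a $\Delta^b_1$-formula and reduces (b) to a local check with no induction. Part (a) then needs only $\Delta^b_1(\alpha)$-induction over rounds, plus uniqueness of the polynomial-time subcomputations of $M_F$. The citation gives a more general tool than needed and keeps the paper short, but it hides this mechanism.

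Three small points:
\begin{enumerate}
\item Bringing $\varphi$ into the form $\forall y{<}t(\bar x)\,F(\bar x,y)$ is not just prenex manipulation. Moving bounded universal quantifiers out past sharply bounded existential ones uses $\Sigma^b_1$-replacement. $\S^1_2$ proves this, but you should say so.
\item In the explicitness verification the space parameter $s$ is a free variable, so configurations can be exponentially long. Your invariant ``clean auxiliary tapes'' is therefore not $\Delta^b_1(\alpha)$ there, contrary to your claim. State it only for a polynomial window of each tape, as the proof of Theorem~\ref{thm:univ} does. Configurations have polynomially many bits only in (a) and (b), where $s=|t_{M_\varphi}(\bar x)|$.
\item The flag bit is harmless but not needed for (b). Under $\varphi$ every call to $M_F$ accepts, so padding $M_F$ to a fixed running time already makes the intended run uniform.
\end{enumerate}
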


The key lemma is

\begin{lemma}\label{lem:CMT12} If $(N,\mathcal Y)\models \S^1_2(\alpha)+\alpha^c_{M_2}$ for some $c\in\N$, then
$(N,\C_N)\models\T^1_2(\alpha)$.
\end{lemma}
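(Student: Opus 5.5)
Here $N$ is the number part of a model $(N,\mathcal Y)$ of $\S^1_2(\alpha)+\alpha^c_{M_2}$; in particular $N\models\S^1_2$. By Lemma~\ref{lem:CM}, $(N,\C_N)\models\S^1_2(\alpha)$, so set-boundedness, extensionality and $\Delta^b_1(\alpha)$-comprehension hold there. What remains is induction for $\Sigma^b_1(\alpha)$-formulas with parameters from $(N,\C_N)$. Such induction is equivalent over $\S^1_2(\alpha)$ to $\Pi^b_1(\alpha)$-induction, so it suffices to verify the latter. As in the proof of Lemma~\ref{lem:CM}, a $\Pi^b_1(\alpha)$-formula $\varphi(x)$ with parameters from $(N,\C_N)$ becomes the $\Pi^b_1$-formula $\varphi^*(x,\bar b)$ with number parameters $\bar b$, by replacing $t\in\hat C$ with $C(t)=1$. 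So I need standard induction for $\varphi^*$ in $N$: if $\varphi^*(0,\bar b)$ and $\forall x{<}a\,(\varphi^*(x,\bar b)\to\varphi^*(x+1,\bar b))$, then $\varphi^*(a,\bar b)$.

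The plan is to turn $\varphi^*$ into a $\Delta^b_1$ (quantifier-free $\PV$) predicate on $x<a$ by means of small circuits, and then apply the induction available in $\S^1_2$ (Remark~\ref{rem:PV}). Let $\psi(y):=\varphi^*(\pi_1(y),\pi_2(y))$ be the single-input version. Lemma~\ref{lem:mc} gives the explicit $\PSPACE$-machine $M_\psi$ and the $\Delta^b_1(\alpha)$-formula $C_\psi$. Since $\alpha^c_{M_2}$ holds in $(N,\mathcal Y)$, Theorem~\ref{thm:equiv2}(a) yields $\alpha^d_{M_\psi}$ there for some $d$. Take $n\in\Log$ large enough that $\langle x,\bar b\rangle<2^n$ for all $x\le a$, and let $C\in N$ be the circuit given by $\alpha^d_{M_\psi}$. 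The goal is then
\[
(N,\mathcal Y)\models\forall y{<}2^n\ \big(C(y)=1\leftrightarrow\psi(y)\big).
\]
For the forward direction, $C(y)=1$ gives some $Y\in\mathcal Y$ that is an accepting computation of $M_\psi$, and Lemma~\ref{lem:mc}(a) gives $\psi(y)$. For the backward direction, $\psi(y)$ together with Lemma~\ref{lem:mc}(b) and $\Delta^b_1(\alpha)$-comprehension in $(N,\mathcal Y)$ gives an accepting computation, so $C(y)=1$.

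Since $\psi$ is a number-sort formula, this equivalence holds in $N$. Define the quantifier-free $\PV$-predicate $F(x):=C(\langle x,\bar b\rangle)=1$, with $C,\bar b$ as parameters. For $x\le a$, $F(x)$ is equivalent to $\varphi^*(x,\bar b)$. Hence the induction hypotheses transfer to $F$ on $[0,a]$, and induction for quantifier-free $\PV$-formulas in $\S^1_2$ (Remark~\ref{rem:PV}) gives $F(a)$, hence $\varphi^*(a,\bar b)$, hence $\varphi(a)$ in $(N,\C_N)$.

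\textbf{Main obstacle.} The delicate point is that Lemma~\ref{lem:mc} is stated for number-sort $\Pi^b_1$ formulas, while the parameters come from $\C_N$. The translation $\varphi\mapsto\varphi^*$ is exactly what avoids this issue, provided the circuit parameters $\bar b$ are passed as ordinary inputs to $M_\psi$. I also need to check that the polynomial size bound in $\alpha^d_{M_\psi}$ causes no problem for the nonstandard $n$ and the nonstandard parameters; it does not, since only the existence of $C\in N$ is used.
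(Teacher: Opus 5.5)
Your proposal is correct and follows the paper's proof essentially step by step. You reduce to $\Pi^b_1(\alpha)$-induction via Lemma~\ref{lem:CM} and replace set parameters by circuit codes. You then run the resulting number-sort $\Pi^b_1$ formula through the model-checker of Lemma~\ref{lem:mc} and Theorem~\ref{thm:equiv2}(a) to get a circuit deciding it below $2^n$, and finish with quantifier-free $\PV$-induction in $N$; the paper's $\chi(\langle\bar z,\bar x,y\rangle)$ and circuit $E$ play exactly the roles of your $\psi$ and $F$.
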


\begin{proof} 
  By  Lemma~\ref{lem:CM} we
  only have to verify~$\Pi^b_1(\alpha)$-induction in~$(N,\C_N)$.
  Let~$\varphi(\bar X,\bar x,y)$ be a~$\Pi^b_1(\alpha)$-formula,
  let~$\bar A$ be parameters from~$\C_N$, let~$\bar a$ be parameters
  from~$N$, and let~$a\in N$. We have to show
$$
(N,\C_N)\models\varphi(\bar A,\bar a,0)\wedge \forall y{<}a\ (\varphi(\bar A,\bar a,y)\to \varphi(\bar A,\bar a,y{+}1))\ \to\ \varphi(\bar A,\bar a,a).
$$

Every set parameter in $\bar A$ equals $\hat C\in\C_N$ for some
$C\in N$. Let $\bar C$ collect these. Using the notation
from the proof of Lemma~\ref{lem:CM}, we have
$\varphi(\bar A ,\bar a,y )^*=\psi(\bar C,\bar a,y)$
where $\psi(\bar z,\bar x,y)\in\Pi^b_1$ and $\varphi(\bar A,\bar a,y)$ is equivalent to 
$\psi(\bar C,\bar a,y)$ in $(N,\C_N)$.
Choose $\chi(u)\in\Pi^b_1$ such that $\S^1_2$ proves
$(\chi(\langle \bar z,\bar x,y\rangle) \leftrightarrow
\psi(\bar z,\bar x,y))$. 
Choose $M_{\chi}$ according to
Lemma~\ref{lem:mc}. By $\Delta^b_1(\alpha)$-comprehension and the choice of $\chi$ we have
$$
(N,\mathcal Y)\models \forall \bar z, \bar x, y\ \big( \exists_2 Y\ 
\q{$Y$ is an accepting computation of $M_{\chi}$ on $\langle \bar z, \bar x, y\rangle$}\leftrightarrow
\psi(\bar z, \bar x, y)\big).
$$
Choose~$n := \max\{|\bar{C}|,|\bar{a}|,|a|,2\}$ in~$N$.
Choose~$e \in \N$ and~$m := n^e$ in~$N$ such that~$N$ satisfies~$\forall \bar{z},\bar{x},y{<}2^n \; |\langle
\bar{z},\bar{x},y\rangle| \leq m$.
Since~$(N,\mathcal Y)\models \alpha^c_{M_2}$, by
Theorem~\ref{thm:equiv2}~(a) there is~$d \in \N$ such
that~$(N,\mathcal Y)\models
\alpha^d_{M_{\chi}}$. Applying~$\alpha^d_{M_{\chi}}$
to~$m$ gives~$D\in N$ such that
$$
(N,\mathcal Y)\models\forall\bar z, \bar x, y{<}2^{n}\ \big(D(\langle
\bar z,\bar x,y\rangle )=1\leftrightarrow \psi(\bar z, \bar x,
y)\big).
$$
But this sentence is number-sort, so we can
equivalently replace~$\mathcal Y$ by~$\C_N$, particularize~$\bar z$
to~$\bar{C}$ and~$\bar x$ to~$\bar{a}$, and then
replace~$\psi(\bar C,\bar a,y)$ back
to~$\varphi(\bar A,\bar a,y)$. Since~$\bar{C},\bar{a},a<2^n$ 
in~$N$, 
$$
(N,\C_N)\models \forall y{\le }a\ \big( E(y)=1\leftrightarrow
\varphi(\bar A,\bar a,y) \big)
$$
where~$E \in N$ is a circuit in $N$ such that  $N\models\forall y{<}2^n\ E(y)=D(\langle \bar C,\bar a,y\rangle)$.  We are left to show
$$
(N,\C_N)\models E(0)=1\wedge \forall y{<}a\ (E(y)=1\to E(y{+}1)=1)\ \to\ E(a)=1.
$$
This is number-sort and, in fact, an instance of $\Delta^b_1$-induction, which holds in $N\models\S^1_2$.
\end{proof}

\begin{proof}[Proof of Lemma~\ref{lem:alphagammaproof}.] 
  We prove only~(a); the proof of~(b) is
  analogous. Assume~$\S^1_2(\alpha)\vdash\alpha^c_{M_1}$ and
  let~$N\models\S^1_2(\alpha)$. We have to
  show~$N\models\meyer^d_{M_1}$ for some~$d\in \N$.

  By Lemma~\ref{lem:CM},~$(N,\C_N)\models \S^1_2(\alpha)$.  By
  assumption,~$(N,\C_N)\models \alpha^c_{M_1}$. By
  Lemma~\ref{lem:alpha} there is~$e\in\N$ such
  that~$(N,\C_N)\models\alpha^e_{M_2}$. By
  Lemma~\ref{lem:CMT12},~$(N,\C_N)\models\T^1_2(\alpha)$.  By
  Lemma~\ref{lem:alphagamma},~$(N,\C_N)\models\meyer^d_{M_1}$ for
  some~$d\in\N$. Being number-sort,~$N\models\meyer^d_{M_1}$ follows.
\end{proof}

\subsection{Consistency from separation}

The following strengthens Theorem~\ref{thm:nbsort}:

\begin{theorem}\label{thm:nbsortM} Assume $N\models\S^1_2+\neg\varphi$ where $\varphi$ is a number-sort sentence.
\begin{enumerate}\itemsep=0pt
\item[(a)] If $\forall\Sigma^{1,b}_1(\V^1_2)\vdash\varphi$, then $N\models\q{$\EXP\not\subseteq\Ppoly$}$.
\item[(b)] If $\forall\Sigma^{1,b}_1(\U^1_2)\vdash\varphi$, then $N\models\q{$\PSPACE\not\subseteq\Ppoly$}$.
\end{enumerate}
\end{theorem}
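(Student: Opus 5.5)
We argue by contraposition, using the model $(N,\C_N)$ from Lemma~\ref{lem:CM}. We treat only (a); (b) is identical with $M_2$, $\U^1_2$ and Corollary~\ref{cor:Sigma11cons}~(b) in place of $M_1$, $\V^1_2$ and Corollary~\ref{cor:Sigma11cons}~(a). So let $N\models\S^1_2+\neg\varphi$, and suppose for contradiction that $N\models\meyer^c_{M_1}$ for some $c\in\N$. By Lemma~\ref{lem:CM}, $(N,\C_N)\models\S^1_2(\alpha)$. Since $\meyer^c_{M_1}$ and $\varphi$ are number-sort, also $(N,\C_N)\models\meyer^c_{M_1}+\neg\varphi$. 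The goal is to show that $(N,\C_N)$ satisfies every axiom of $\forall\Sigma^{1,b}_1(\V^1_2)$; then it satisfies $\varphi$, a contradiction.

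The key step is therefore: every axiom $\forall\bar X\forall\bar x\,\psi(\bar X,\bar x)$ with $\psi\in\Sigma^{1,b}_1$ and $\V^1_2\vdash\psi$ holds in $(N,\C_N)$. Corollary~\ref{cor:Sigma11cons} only covers $\psi$ without free set variables, so we remove these first. In $(N,\C_N)$ every set parameter is $\hat C$ for some $C\in N$. We therefore replace each free set variable $X_i$ by $\mathit{eval}(z_i,\cdot)$, i.e.\ substitute $z_i(t)=1$ for $t\in X_i$, which gives a formula $\psi'(\bar z,\bar x)$. The formula $\psi'$ is still $\Sigma^{1,b}_1$ and has no free set variables. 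Moreover, $\V^1_2\vdash\psi'$, because $\V^1_2$ proves $\Delta^b_1(\alpha)$-comprehension (indeed much more), so substituting the comprehension-defined set $\{v<b: z_i(v)=1\}$ for $X_i$ is legitimate. One must check that the bounds are harmless: if the occurrences of $X_i$ in $\psi$ are not bounded, we can still argue from the universal closure over $X_i$, since $\V^1_2$ proves $\forall_2 X\,\psi$ and the set $\{v : z(v)=1\}$ exists by set-boundedness-compatible comprehension below $2^{|z|}$, since circuits only accept numbers below $2^{|z|}$. Corollary~\ref{cor:Sigma11cons}~(a) now gives $\S^1_2(\alpha)+\meyer^c_{M_1}\vdash\psi'(\bar z,\bar x)$. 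Hence $(N,\C_N)\models\forall\bar z\forall\bar x\,\psi'$, and by the translation $\varphi\mapsto\varphi^*$ from the proof of Lemma~\ref{lem:CM}, $(N,\C_N)\models\forall\bar X\forall\bar x\,\psi$.

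Once all axioms of $\forall\Sigma^{1,b}_1(\V^1_2)$ hold in $(N,\C_N)$, any consequence $\varphi$ holds there, contradicting $N\models\neg\varphi$. So $N\models\neg\meyer^c_{M_1}$ for every $c$, which is $N\models\q{$\EXP\not\subseteq\Ppoly$}$. The main obstacle is the substitution step: justifying carefully that $\V^1_2\vdash\psi'$, and that $\psi'$ stays in $\Sigma^{1,b}_1$. For the latter we need that replacing atoms $t\in X$ by the quantifier-free $\PV$ atom $z(t)=1$ preserves the bounded-quantifier structure, which it does. For the former we use the $\Delta^b_1(\alpha)$-comprehension available in $\V^1_2$ to instantiate the universal set quantifier.
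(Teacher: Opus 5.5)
Your proposal is correct and follows essentially the paper's own argument. It shows that $(N,\C_N)$ satisfies $\forall\Sigma^{1,b}_1(\V^1_2)$ whenever $N\models\S^1_2+\meyer^c_{M_1}$, by replacing set parameters with circuits, using $\Delta^b_1(\alpha)$-comprehension in $\V^1_2$ to obtain a set-free $\Sigma^{1,b}_1$ consequence, and then applying Corollary~\ref{cor:Sigma11cons}~(a). The one cosmetic difference is that the paper guards the substituted formula with the hypothesis that $\bar y$ is a tuple of circuits, whereas you drop the guard because $\eval$ is total and vanishes above $2^{|z|}$.
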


\begin{proof} We only prove~(a), the proof of~(b) is analogous. It
  suffices to show for all~$N$ and~$c\in\N$:
  if~$N\models\S^1_2+\meyer^c_{M_1}$,
  then~$(N,\C_N)\models\forall\Sigma^{1,b}_1(\V^1_2)$.

  Assume~$N\models\S^1_2+\meyer^c_{M_1}$ and
  let~$\varphi(\bar X,\bar x)$ be a~$\Sigma^{1,b}_1$-formula provable
  in~$\V^1_2$.  Let~$\bar A$ and~$\bar a$ be parameter tuples
  from~$\C_N$ and~$N$.  We have to
  show~$(N,\C_N)\models \varphi(\bar A,\bar a)$.
  Using the notation from the proof of Lemma~\ref{lem:CM}, recall 
    that~$\varphi(\bar A,\bar a)^*$ is obtained by replacing every~$A$
  from~$\bar A$ by~$C(\cdot)$ for some circuit~$C\in N$. Let~$\bar C$
  collect these. Then~$\varphi(\bar A,\bar a)^*=\psi(\bar C,\bar a)$
  for a~$\Sigma^{1,b}_1$-formula~$\psi(\bar y,\bar x)$
  and~$\psi(\bar C,\bar a)$ is equivalent to~$\varphi(\bar A,\bar a)$
  in~$(N,\C_N)$. We claim~$(N,\C_N)\models\psi(\bar C,\bar a)$.

  Since~$\V^1_2$ proves~$\forall_2 \bar X\ \varphi$,
  by~$\Delta^b_1(\alpha)$-comprehension~$\V^1_2$ proves
$$
\chi\ :=\ (\q{$\bar y$ is a tuple of circuits} \to \psi(\bar y,\bar x)).
$$
This is a~$\Sigma^{1,b}_1$-formula without free set
variables. Thus,~$\S^1_2(\alpha)+\meyer_{M_1}^c\vdash\chi$ by
Corollary~\ref{cor:Sigma11cons}~(a).
Thus,~$(N,\C_N)\models\forall \bar y\forall\bar x \ \chi$.
Plugging~$\bar C$ for~$\bar y$ and~$\bar a$ for~$\bar x$ gives our
claim.
\end{proof}

\section{Magnification}\label{sec:magnification}

Write
~$\exists^{\infty}n{\in}\Lg\ \varphi(n,\ldots)$
for~$\forall u\ \exists v\ (|v|>|u|\wedge \varphi(|v|,\ldots))$.

\begin{definition} Let~$c\in\N$ and let~$M$ be an explicit~$\EXP$-
  or~$\PSPACE$-machine~$M$. Define
$$
\io\meyer^{c}_{M}:=\exists^{\infty}n{\in}\Lg\ \exists C{<}2^{n^c}\ \forall x{<}2^n\ \q{$C_x(\cdot)$ is a halting computation of $M$ on $x$}.
$$
Set
$
\q{$\EXP\not\subseteq\ioPpoly$}:=\big\{\neg \io\meyer^{c}_{M_1}\mid c\in\N \big\}
$ and $
\q{$\PSPACE\not\subseteq\ioPpoly$}:=\big\{\neg \io\meyer^{c}_{M_2}\mid c\in\N \big\}.$
\end{definition}
The sentence~$\io\meyer^{c}_{M}$ states that~$M$ is simulated by
size-$n^c$ circuits on unboundedly many input lengths~$n$. The
theories state that for all but boundedly many input lengths~$n$ the
universal machines are not simulated by size-$n^c$ circuits.  

One can
easily improve Theorem~\ref{thm:maincon}~(a):

\begin{corollary}\label{cor:aecons} 
$\S^1_2\cup \q{$\EXP\not\subseteq\ioPpoly$}$ is consistent.
\end{corollary}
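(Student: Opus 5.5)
We will reuse the strategy behind Theorem~\ref{thm:maincon}~(a). Takeuti's Theorem~\ref{thm:takeuti} supplies a bounded number-sort formula $\varphi(\bar x)$ provable in $\V^1_2$ but not in $\S^1_2$. Fix a model $N\models\S^1_2+\neg\forall\bar x\,\varphi(\bar x)$ and witnesses $\bar a\in N$ with $N\models\neg\varphi(\bar a)$. By Lemma~\ref{lem:CM}, $(N,\C_N)\models\S^1_2(\alpha)+\neg\varphi(\bar a)$.

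The plan is to show that $N$ satisfies $\neg\io\meyer^c_{M_1}$ for every $c$. Equivalently, we show that for each $c$ there is a bound $u\in N$ such that no length $n\in\Lg$ with $n>|u|$ admits a circuit of size $2^{n^c}$ simulating $M_1$. This requires localizing the proof of Corollary~\ref{cor:Sigma11cons}~(a): the hypothesis $\meyer$ is only needed at a single length $n$, and that length only has to exceed a bound determined by $\bar a$.

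To carry this out, first obtain the explicit $\EXP$-machine $M$ for $\varphi'(x):=\varphi(\pi_1(x),\ldots,\pi_k(x))$ from Theorem~\ref{thm:wit}~(a). Next, inspect the proof of Lemma~\ref{lem:alpha}. It transfers a $\meyer^c_{M_1}$-circuit at length $m=n^e$ into a $\meyer^d_M$-circuit at length $n$. The same argument transfers a circuit for $M_1$ at a single length $m$ into one for $M$ at every $n$ with $n^e\le m$; longer inputs can be padded, or more simply one takes $n'$ as large as allowed by $n'^e\le m$. For this to hold at one length, we need that a good circuit for $M_1$ at length $m$ also works on all shorter inputs $\langle M,x,t_M(x)\rangle<2^m$. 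This holds, since $\meyer$ quantifies over all $x<2^m$.

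Now suppose $N\models\io\meyer^c_{M_1}$. With $u:=\langle\bar a\rangle$, the $\exists^\infty$ quantifier yields some $m\in\Lg$, $m>|u|^e$, together with a circuit $C'$ for $M_1$ at length $m$. From $C'$ we obtain, inside $N$, a circuit $C$ whose slice $C_x(\cdot)$ is a halting computation of $M$ on $x=\langle\bar a\rangle$. By $\Delta^b_1(\alpha)$-comprehension in $(N,\C_N)$ this gives a set $Y$, and Theorem~\ref{thm:wit}~(a) then yields $\varphi(\bar a)$, a contradiction. Hence $N$ satisfies every $\neg\io\meyer^c_{M_1}$, which is the desired consistency.

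The main obstacle is checking that Lemma~\ref{lem:alpha}(b) and the witnessing step really go through when $\meyer$ is available only at one suitably large length rather than universally. This should be a direct reading of those proofs with a fixed $n$, since each uses $\meyer$ at exactly one length, namely $n^e$. Non-strict $\Sigma^{1,b}_1$ forms are not an issue here, because $\varphi$ is bounded.
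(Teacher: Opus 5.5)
Your argument is correct and is essentially the paper's primary route. The paper derives the corollary from Theorem~\ref{thm:ioM} together with Takeuti's Theorem~\ref{thm:takeuti}, and the proof of Theorem~\ref{thm:ioM} carries out the same localization of Corollary~\ref{cor:Sigma11cons}~(a) that you describe: Lemma~\ref{lem:iogamma} takes the new length to be $\lfloor\ell^{1/e}\rfloor$, as you suggest, and then a circuit at one large length yields a single halting computation of $M$ on the relevant input. The paper also sketches an alternative that uses Parikh's theorem to turn $\S^1_2\vdash\io\meyer^c_{M_1}$ into $\S^1_2\vdash\meyer^{d}_{M_1}$ and then appeals to Theorem~\ref{thm:maincon}~(a).
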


\begin{proof} 
  This follows from Theorem~\ref{thm:ioM} below via Takeuti's
  Theorem~\ref{thm:takeuti}. One can also argue directly from
  Theorem~\ref{thm:maincon}~(a) as follows. Inconsistency would mean,
  by compactness, that~$\S^1_2\vdash\io\meyer^{c}_{M_1}$ for
  some~$c\in\N$. It suffices to infer~$\S^1_2\vdash\meyer^d_{M_1}$ for
  some~$d\in\N$.

  The sentence~$\io\meyer^{c}_{M_1}$ has the
  form~$\forall u \exists v \varphi(u,v)$ with~$\varphi(u,v)$
  bounded. By Parikh's
  theorem,~$\S^1_2\vdash \forall u\exists v{<}t(u)\varphi(u,v)$ for
  some term~$t(u)$.  Argue
  in~$\S^1_2$. Let~$n\in\Log$. Choose~$v<t(u)$
  according~$\io\meyer^{c}_{M_1}$ for~$u:=2^n$. Then~$n<m:=|v|\le n^e$
  for suitable~$e\in\N$. Choose~$C<2^{m^c}$ according
  to~$\io\meyer^{c}_{M_1}$ for this~$m$, i.e.,~$C_x(\cdot)$ is a
  halting computation of~$M_1$ on~$x$ for
  all~$x<2^m$. Then~$C<2^{n^{d}}$ for~$d:=ec$ and~$C_x(\cdot)$ is a
  halting computation of~$M_1$ on~$x$ for all~$x<2^n$.
\end{proof}

\begin{remark}\label{rem:void}  
 We discuss almost everywhere lower bounds only for the sake of a magnification result, namely Theorem~\ref{thm:magn}. Observe that the corresponding statement for \q{$\EXP\not\subseteq\Ppoly$} is void. The reason is that this theory is $\Sigma_1$, i.e., its sentences can be written in the form $\exists x\varphi(x)$ with $\varphi(x)$ bounded. If such a theory is true, then  $\S^1_2$ proves it.  
 
 This has been overlooked in \cite{abm}. \end{remark}

We turn to the proof of Theorem~\ref{thm:magn}. The starting point is
a straightforward analogue of Lemma~\ref{lem:alpha}.

\begin{lemma}\label{lem:iogamma} \
\begin{enumerate}\itemsep=0pt
\item[(a)] For every explicit $\EXP$-machine $M$ and every $c\in\N$ there is $d\in\N$ such that 
$$
\S^1_2\vdash (\io\meyer_{M_1}^c\to\io\meyer_{M}^d).
$$ 
\item[(b)] For every explicit $\PSPACE$-machine $M$ and every $c\in\N$ there is $d\in\N$ such that 
$$
\S^1_2\vdash (\io\meyer_{M_2}^c\to\io\meyer_{M}^d).
$$ 
\end{enumerate}
\end{lemma}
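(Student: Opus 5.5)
I follow the proof of the second implication of Lemma~\ref{lem:alpha}, but now also have to handle the change of input length. I first work in $\S^1_2(\alpha)$. Since $\io\meyer^c_{M_1}$ and $\io\meyer^d_M$ are number-sort sentences and $\S^1_2(\alpha)$ is conservative over $\S^1_2$ for such sentences (via Lemma~\ref{lem:CM}), this suffices. Part (b) is identical with $M_2$, $G_2$ and Lemma~\ref{lem:M2} in place of $M_1$, $G_1$ and Lemma~\ref{lem:M1}.

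Fix $M$ witnessed by $t_M$, and choose $e\in\N$ with $\S^1_2\vdash \forall x{<}2^n\ \langle M,x,t_M(x)\rangle<2^{n^e}$ for $n\in\Log$. Assume $\io\meyer^c_{M_1}$.

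\textbf{Main obstacle.} The sentence only gives us lengths $m$ at which $M_1$ has small circuits, whereas we need good lengths $n$ for $M$, and we must still produce unboundedly many of them. The natural choice is to take $n$ to be the largest element of $\Log$ with $n^e\le m$, so that every $x<2^n$ maps to $\langle M,x,t_M(x)\rangle<2^m$. Then $n\ge m^{1/e}/2$, roughly, which is computable in $\S^1_2$ (binary search on a $\Delta^b_1$ property) and tends to infinity with $m$.

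Concretely, given $u$, I will proceed as follows.
\begin{enumerate}
\item Apply $\io\meyer^c_{M_1}$ to the number $u':=2^{|u|^e}$, or to a suitable term in $u$ ensuring $|u'|$ is large enough. This yields $v$ with $m:=|v|>|u'|$ and a circuit $C'<2^{m^c}$ such that $C'_y(\cdot)$ is a halting computation of $M_1$ on $y$ for all $y<2^m$.
\item Let $n$ be maximal with $n^e\le m$. Then $n>|u|$, and for $x<2^n$ we have $\langle M,x,t_M(x)\rangle<2^m$.
\item As in Lemma~\ref{lem:alpha}, apply $G_1$ from Lemma~\ref{lem:M1}(b). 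This gives that $G_1(C'_{\langle M,x,t_M(x)\rangle}(\cdot),x,\cdot)$ is a halting computation of $M$ on $x$.
\item Use Lemma~\ref{lem:circuit} to obtain a circuit $C$ with
\[
C(x,w)=1\leftrightarrow G_1\big(C'_{\langle M,x,t_M(x)\rangle}(\cdot),x,w\big)
\]
for all $x<2^n$ and $w<2^{n^f}\ge\Bd_M(x)$.
\end{enumerate}
The size of $C$ is polynomial in $|C'|\le m^c\cdot$const, and $m<(n+1)^e\le (2n)^e$, so $C<2^{n^d}$ for a standard $d$ depending only on $c,e$ and the circuit-construction overhead. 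Finally, to express ``$n$ is a length $|v''|$ with $|v''|>|u|$'', take $v'':=2^{n}-1$ or similar.

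The remaining care lies in two points. First, Lemma~\ref{lem:circuit} gives a uniform circuit with $C'$ as an input, which is then specialized by fixing $C'$; the resulting size bound must be polynomial in $n$, which follows since $|C'|$ is polynomial in $n$. Second, the inequality $m<(2n)^e$ must be proved from the maximality of $n$, which is routine in $\S^1_2$.
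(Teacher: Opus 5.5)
Your proof is correct and follows the paper's argument: the paper likewise takes an $M_1$-good length $\ell>(n+1)^e$ from $\io\meyer^c_{M_1}$ and passes to $m:=\lfloor \ell^{1/e}\rfloor>n$, which is your maximal $n$ with $n^e\le m$. It then builds the new circuit from $G_1$ of Lemma~\ref{lem:M1}(b) via Lemma~\ref{lem:circuit}, just as you do. The one slip is your concrete choice $u':=2^{|u|^e}$, which does not force $(|u|+1)^e\le m$ when $e\ge 2$; your hedge already allows the fix, e.g.\ $u':=2^{(|u|+2)^e}$, which gives both $n>|u|$ and $n\in\Log$.
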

\begin{proof} We prove only (a); the proof of (b) is
  analogous. Let~$t_M$ witness~$M$. Argue in~$\T^1_2(\alpha)$ and
  assume~$\io\meyer_{M_1}^c$.  Choose~$e\in\N$ such
  that~$\langle M,x,t_M(x)\rangle<2^{|x|^e}$ for all~$x>0$.
  Let~$n\in\Log$.  By~$\io\meyer_{M_1}^c$ choose~$\ell\in\Log$
  with~$\ell>(n+1)^e$ and such that there is a circuit~$C$ of size at
  most~$\ell^c$ such that~$C_y(\cdot)$ is a halting computation
  of~$M_1$ on~$y$ for all~$y<2^\ell$.

  Let~$m:=\lfloor\ell^{1/e}\rfloor$.  Note that~$m>n$
  because~$\ell > (n+1)^e$. We will use~$m$ to witness~$\io\meyer_M^d$
  beyond~$n$. First note
  that~$\langle M,x,t_M(x)\rangle<2^{m^e}\le 2^\ell$ for
  all~$x<2^m$. Hence, for all~$x<2^m$: 
\begin{center}
$\q{$C_{\langle M,x,t_M(x)\rangle}(\cdot)$ is a halting computation of $M_1$ on $\langle M,x,t_M(x)\rangle$}$.
\end{center}

We are left to find a small circuit~$D$ such that, for all~$x<2^m$,
the set~$D_x(\cdot)$ is a halting computation of~$M$ on~$x$.
Choose~$f\in\N$ such that~$C_{\langle M,x,t_M(x)\rangle}$ has at
most~$m^f$ inputs for all~$x<2^m$,
so~$C_{\langle M,x,t_M(x)\rangle}(\cdot)<2^{m^f}$ for all~$x<2^m$.  By
Lemma~\ref{lem:circuit} choose a circuit~$D$ such that for all~$x<2^m$
and all~$v<2^{m^f}$:
$$
D(x,v)=G_1(C_{\langle M,x,t_M(x)\rangle}(\cdot),x,v),
$$
where~$G_1$ is from Lemma~\ref{lem:M1}~(b).  Then~$D$ works and has size
at most~$m^{d}$, for some~$d\in\N$.
\end{proof}

We observe that Theorem~\ref{thm:nbsortM} generalizes to almost-everywhere lower bounds:

\begin{theorem}\label{thm:ioM} Assume $N\models\S^1_2+\neg\varphi$ where $\varphi$ is a number-sort sentence.
\begin{enumerate}\itemsep=0pt
\item[(a)] If $\forall\Sigma^{1,b}_1(\V^1_2)\vdash\varphi$, then $N\models\q{$\EXP\not\subseteq\ioPpoly$}$.
\item[(b)] If $\forall\Sigma^{1,b}_1(\U^1_2)\vdash\varphi$, then $N\models\q{$\PSPACE\not\subseteq\ioPpoly$}$.
\end{enumerate}
\end{theorem}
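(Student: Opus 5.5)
We follow the proof of Theorem~\ref{thm:nbsortM} and only replace the global $\meyer$-assumption by an infinitely-often one. We treat only~(a); (b) is analogous, using $M_2$, $\U^1_2$, Theorem~\ref{thm:wit}~(b) and Lemma~\ref{lem:iogamma}~(b). Argue contrapositively: assume $N\models\S^1_2+\io\meyer^c_{M_1}$ for some $c\in\N$. We show that $(N,\C_N)\models\forall\Sigma^{1,b}_1(\V^1_2)$. Since $(N,\C_N)\models\S^1_2(\alpha)$ by Lemma~\ref{lem:CM} and $\varphi$ is number-sort, this gives $N\models\varphi$, contradicting $N\models\neg\varphi$.

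Let $\varphi(\bar X,\bar x)\in\Sigma^{1,b}_1$ be provable in $\V^1_2$, and let $\bar A=\hat{\bar C}$ and $\bar a$ be parameters. As in Theorem~\ref{thm:nbsortM}, we pass to the number-sort-parameter version
\[
\chi(\bar y,\bar x)\;:=\;(\q{$\bar y$ is a tuple of circuits}\to\psi(\bar y,\bar x)),
\]
which is provable in $\V^1_2$. Using Lemma~\ref{lem:strict}, we replace it by a strict $\hat\Sigma^{1,b}_1$-formula and code the tuple into a single variable $u$, obtaining $\chi'(u)$. Theorem~\ref{thm:wit}~(a) then gives an explicit $\EXP$-machine $M$, without oracles, such that
\[
\S^1_2(\alpha)\vdash\q{$Y$ is a halting computation of $M$ on $u$}\to\chi'(u).
\]
By Lemma~\ref{lem:iogamma}~(a) we have $N\models\io\meyer^d_M$ for some $d\in\N$.

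Now fix the concrete element $u:=\langle\bar C,\bar a\rangle\in N$. Apply $\io\meyer^d_M$ with lower bound $|u|$: it provides some $m\in\Log$ with $m>|u|$ and a circuit $D<2^{m^d}$ such that $D_x(\cdot)$ is a halting computation of $M$ on $x$ for all $x<2^m$. In particular $u<2^m$, so $D_u(\cdot)\in\C_N$ is a halting computation of $M$ on $u$ in $(N,\C_N)$. Note that $D_u(\cdot)$ lies in $\C_N$ directly, so no comprehension is needed. The implication above holds in $(N,\C_N)\models\S^1_2(\alpha)$, hence $(N,\C_N)\models\chi'(u)$. Therefore $\psi(\bar C,\bar a)$ holds, and so does $\varphi(\bar A,\bar a)$.

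The key observation, and the only place where care is needed, is that the argument uses just one input length per instance. Corollary~\ref{cor:Sigma11cons} used the length $n=|x|$ itself. Here we instead use an arbitrarily larger length $m$ supplied by the infinitely-often hypothesis, and circuits for length $m$ cover all smaller inputs. So nothing beyond Lemma~\ref{lem:iogamma} is needed; the one point to check is that $D_u(\cdot)$ has the correct bound $\Bd_M(u)$, which holds by definition of a halting computation for all $x<2^m$.
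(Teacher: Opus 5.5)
Your proof is correct and is essentially the paper's argument. The paper reruns the proof of Theorem~\ref{thm:nbsortM} with an infinitely-often version of Corollary~\ref{cor:Sigma11cons}, obtained from Lemma~\ref{lem:iogamma} and Theorem~\ref{thm:wit} by taking an input length $m>|x|$ from the infinitely-often hypothesis, exactly as you do. The only difference is cosmetic: you argue semantically in $(N,\C_N)$ at the single element $u$, so $D_u(\cdot)$ already lies in $\C_N$ and no comprehension is needed, whereas the paper proves the corresponding implication syntactically in $\S^1_2(\alpha)$ plus the infinitely-often hypothesis.
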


\begin{proof} We prove only (a), with (b) being analogous. The proof
  is the same as that of Theorem~\ref{thm:nbsortM}~(a) except that we
  need a version of Corollary~\ref{cor:Sigma11cons}~(a)
  for~$\io\meyer^c_{M_1}$ instead of~$\meyer^c_{M_1}$. Following that
  proof with Lemma~\ref{lem:iogamma} instead of Lemma~\ref{lem:alpha},
  we have to show, for every~$d\in\N$,
  that~$\S^1_2(\alpha)+\io\meyer^d_{M}$ proves
  every~$\varphi(x)\in\hat\Sigma^{1,b}_1$ which is proved by~$\V^1_2$;
  here,~$M$ is chosen for~$\varphi(x)$ according to
  Theorem~\ref{thm:wit}~(a).  This is easy: argue
  in~$\S^1_2(\alpha)+\io\meyer^d_{M}$ and let~$x$ be
  given. Set~$n:=\max\{|x|,2\}$. By~$\io\meyer^d_{M_1}$ there
  are~$n<m\in\Log$ and~$C<2^{m^d}$ such that for all~$y<2^m$, the
  set~$C_y(\cdot)$ is a halting computation of~$M$ on~$y$.
  Then~$Y:=C_x(\cdot)$ is a halting computation of~$M$ on~$x$.  As~$Y$
  exists by~$\Delta^b_1(\alpha)$-comprehension, we get~$\varphi(x)$ by
  Theorem~\ref{thm:wit}~(a).
\end{proof}

\begin{proof}[Proof of Theorem~\ref{thm:magn}] We prove only (a), with
  (b) being
  analogous. If~$\S^1_2\not\vdash\q{$\EXP\not\subseteq\ioPpoly$}$, we
  find~$c\in\N$ and a model~$N\models\S^1_2+\io\meyer^c_{M_1}$.
  Theorem~\ref{thm:ioM} (a) with~$\varphi:=\neg\io\meyer^c_{M_1}$
  implies~$\forall\Sigma^{1,b}_1(\V^1_2)\not\vdash\neg\io\meyer^c_{M_1}$.
\end{proof}

\section{Conclusion}\label{sec:concl}

The central research question is whether~$\S^1_2$ can be proved consistent with~$\NP\not\subseteq\Ppoly$. We
achieved~$\EXP$ in place of~$\NP$ but lowering this to~$\PSPACE$ is
still open:

\begin{theorem*} Is $\S^1_2\cup\q{$\PSPACE\not\subseteq\Ppoly$}$ consistent?
\end{theorem*}
 
This question deserves some interest as a frontier question, also when
understood via Theorem~\ref{thm:nbsort} as a question concerning
independence. Frontier questions can be misleading, of course, but
here our aim is to emphasize one of the weakest open questions whose
answer still requires a new insight. The answer is not sensible to the formalization:
 
 \begin{proposition} The following are equivalent.
 \begin{enumerate}\itemsep=0pt
 \item[(a)] $\S^1_2\cup\{\neg\meyer^c_{M_2}\mid c\in\N\}$ is consistent.
 \item[(b)] $\S^1_2\cup\{\neg\meyer^c_{M}\mid c\in\N\}$ is consistent for some explicit $\PSPACE$-machine $M$.
  \item[(c)] $\S^1_2(\alpha)\cup\{\neg\alpha^c_{M_2}\mid c\in\N\}$ is consistent.
 \item[(d)] $\S^1_2(\alpha)\cup\{\neg\alpha^c_{M}\mid c\in\N\}$ is consistent for some explicit $\PSPACE$-machine $M$.
\end{enumerate}
 \end{proposition}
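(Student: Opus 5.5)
The plan is to go around the cycle (a)$\Rightarrow$(b)$\Rightarrow$(a) and (c)$\Rightarrow$(d)$\Rightarrow$(c), and then link the two cycles with (a)$\Leftrightarrow$(c).

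The easy steps come first. (a)$\Rightarrow$(b) is trivial: take $M:=M_2$, which is an explicit $\PSPACE$-machine by Lemma~\ref{lem:M2}. For (b)$\Rightarrow$(a), Theorem~\ref{thm:equiv2}~(b) gives for each $c$ a $d$ with $\S^1_2(\alpha)\vdash(\meyer^c_{M_2}\to\meyer^d_M)$. Since both sentences are number-sort, this transfers to $\S^1_2$. One way is to apply the implication in $(N,\C_N)$ via Lemma~\ref{lem:CM} and then drop back to $N$. Hence a model of $\S^1_2+\{\neg\meyer^d_M\}_d$ satisfies every $\neg\meyer^c_{M_2}$. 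In the same way, (c)$\Rightarrow$(d) is trivial, and (d)$\Rightarrow$(c) is the contrapositive of Theorem~\ref{thm:equiv2}~(a).

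(a)$\Rightarrow$(c): let $N\models\S^1_2+\{\neg\meyer^c_{M_2}\}_c$. By Lemma~\ref{lem:CM}, $(N,\C_N)\models\S^1_2(\alpha)$, and it still satisfies every $\neg\meyer^c_{M_2}$ because these are number-sort. Theorem~\ref{thm:equiv2}~(c) holds over $\S^1_2(\alpha)$, so $\alpha^d_{M_2}$ would imply $\meyer^{d'}_{M_2}$ for some $d'$. Here I will use the contrapositive of (c) read with the roles made precise: applying the lemma-style argument of Lemma~\ref{lem:gammaalpha}, $\meyer^c_{M_2}\to\alpha^c_{M_2}$, does not suffice. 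The direction needed is $\alpha\to\meyer$, i.e.\ Theorem~\ref{thm:equiv2}~(d), which needs $\T^1_2(\alpha)$. The way around this is Lemma~\ref{lem:CMT12}. Suppose $(N,\C_N)\models\alpha^c_{M_2}$ for some $c$. Then $(N,\C_N)\models\T^1_2(\alpha)$, so Theorem~\ref{thm:equiv2}~(d) yields $(N,\C_N)\models\meyer^d_{M_2}$, a contradiction. Thus $(N,\C_N)$ witnesses (c).

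(c)$\Rightarrow$(a) is the step I expect to be the real obstacle, since a model of $\neg\alpha$ need not satisfy $\neg\meyer$ without $\T^1_2(\alpha)$. I will argue by contraposition via compactness, exactly as in the proof of Theorem~\ref{thm:maincon}~(b). Suppose (a) fails. Then $\S^1_2\vdash\meyer^c_{M_2}$ for some $c$. Any $(N,\mathcal Y)\models\S^1_2(\alpha)$ has number part satisfying $\S^1_2$, hence $\meyer^c_{M_2}$. Theorem~\ref{thm:equiv2}~(c), which is available in $\S^1_2(\alpha)$, then gives $\alpha^d_{M_2}$ for some $d$. So $\S^1_2(\alpha)\vdash\alpha^d_{M_2}$, and (c) fails. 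This direction in fact goes through easily precisely because part (c) of Theorem~\ref{thm:equiv2} needs only $\S^1_2(\alpha)$ for $\PSPACE$. The genuinely delicate point is therefore the (a)$\Rightarrow$(c) direction, which is handled by Lemma~\ref{lem:CMT12}.
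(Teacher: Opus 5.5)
Your proof is correct and follows the paper's route. You derive (a)$\Leftrightarrow$(b), (c)$\Leftrightarrow$(d) and (c)$\Rightarrow$(a) from Theorem~\ref{thm:equiv2}~(b), (a) and (c). For (a)$\Rightarrow$(c) you run, semantically on $(N,\C_N)$, the argument of Lemma~\ref{lem:alphagammaproof}~(b): Lemma~\ref{lem:CM}, then Lemma~\ref{lem:CMT12}, then Theorem~\ref{thm:equiv2}~(d). The paper instead cites that lemma after a compactness step.

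You should delete the stray remarks that Theorem~\ref{thm:equiv2}~(c) gives $\alpha\to\meyer$ and that (c)$\Rightarrow$(a) is the real obstacle. Your own final arguments correctly refute both.
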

 
 \begin{proof} The equivalence (a)$\Leftrightarrow$(b) follows from
   Theorem~\ref{thm:equiv2}~(b), the equivalence (c)$\Leftrightarrow$(d)
   from Theorem~\ref{thm:equiv2}~(a), the implication (c)$\Rightarrow$(a)
   from Theorem~\ref{thm:equiv2}~(c). For (a)$\Rightarrow$(c) assume
   (c) fails; by compactness,~$\S^1_2(\alpha)\vdash \alpha^c_{M_2}$
   for some~$c\in\N$; by
   Lemma~\ref{lem:alphagammaproof}~(b),~$\S^1_2\vdash \meyer^d_{M_2}$
   for some~$d$, so (a) fails.
 \end{proof}

 Finally, we note some conditional consistency results based on
 witnessing methods, analo\-gous to the already mentioned ones from
 \cite{cokra}. By~$\PTIME^{\NP}_{\mathrm{tt}}$ we denote
 the class of problems decidable in polynomial time with non-adaptive
 queries to an~$\NP$ oracle.

\begin{proposition}\label{prop:conditional}\
\begin{enumerate}\itemsep=0pt
\item[(a)] $\S^1_2 \cup \q{$\PSPACE\not\subseteq\Ppoly$}$ is consistent if  $\PSPACE\not\subseteq \PTIME^{\NP}_{\mathrm{tt}}$.
\item[(b)] $\S^2_2 \cup \q{$\PSPACE\not\subseteq\Ppoly$}$ is consistent if  
 $\PSPACE\not\subseteq\PTIME^{\NP}$. 
 \item[(c)] $\S^2_2 \cup \q{$\EXP\not\subseteq\Ppoly$}$ is consistent if  
 $\EXP\not\subseteq\PTIME^{\NP}$. 
\end{enumerate}
\end{proposition}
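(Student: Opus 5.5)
We argue by contraposition and extract a collapse from a proof, using the witnessing theorems for $\S^1_2$ and $\S^2_2$. Suppose, for (a), that $\S^1_2\cup\q{$\PSPACE\not\subseteq\Ppoly$}$ is inconsistent. By compactness, $\S^1_2\vdash\meyer^c_{M_2}$ for some $c\in\N$. The first step is to note the syntactic form of $\meyer^c_{M_2}$. Inside $\q{$C_x(\cdot)$ is a halting computation of $M_2$ on $x$}$ the set variable is replaced by a circuit, and all checks ($\Start$, $\Next$, $\Fail$, the halting state) are quantifier-free $\PV$. So the matrix $\forall x{<}2^n\,(\ldots)$ is $\Pi^b_1$, and $\meyer^c_{M_2}$ is a $\forall\Sigma^b_2$-sentence of the form $\forall n{\in}\Log\ \exists C{<}2^{n^c}\ \theta(n,C)$ with $\theta\in\Pi^b_1$. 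Cases (b) and (c) have the same shape, with $\S^2_2$ in place of $\S^1_2$ and $M_1$ in place of $M_2$ in (c).

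Next we apply the appropriate witnessing theorem. For $\S^2_2$ we use Buss' theorem: $\forall\Sigma^b_2$-consequences are witnessed by $\mathrm{FP}^{\NP}$ functions. This gives a $\mathrm{FP}^{\NP}$ algorithm which, on input $1^n$, outputs a circuit $C$ satisfying $\theta(n,C)$; the correctness holds in the standard model by soundness. To decide the language of $M_2$ (resp.\ $M_1$) on $x$ with $|x|=n$:
\begin{itemize}
\item compute $C$;
\item evaluate $C_x$ at the positions coding the state of the final configuration;
\item accept iff that state is accepting.
\end{itemize}
The truth table of $C_x$ is the true computation, since it is checked by the true $\Pi^b_1$ formula $\theta$. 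Hence $\PSPACE\subseteq\PTIME^{\NP}$ (resp.\ $\EXP\subseteq\PTIME^{\NP}$, as $M_1$ is $\EXP$-complete), which proves (b) and (c).

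For (a) we use the Krajíček witnessing theorem for $\S^1_2$. Its $\forall\Sigma^b_2$-consequences are witnessed by polynomial-time machines making $O(\log n)$ adaptive queries to an $\NP$ witness oracle, i.e.\ $\mathrm{FP}^{\NP}[\mathrm{wit},O(\log n)]$. The main obstacle is to turn the resulting decision procedure into a $\PTIME^{\NP}_{\mathrm{tt}}$ one, because witness answers are not bits and cannot simply be enumerated. I would follow the Cook--Krajíček argument. There are only polynomially many yes/no answer patterns $p\in\{0,1\}^{O(\log n)}$. For each $p$, and in parallel, ask two $\NP$ queries:
\begin{itemize}
\item $Q_p$: do witnesses exist for all the ``yes'' positions of $p$ such that the run guided by $p$ is well-formed?
\item $Q'_p$: additionally, does the circuit $C$ output along that run satisfy ``$C_x$ ends in an accepting state''?
\end{itemize}
The true pattern is the lexicographically greatest $p$ with $Q_p$ true: along the true run, a ``no'' answer really is no, so any pattern that is consistent and deviates from the truth must set some bit to $0$ where the true answer is $1$, and is therefore lexicographically smaller. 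For that true $p$, every run consistent with valid witnesses outputs a circuit satisfying $\theta$, by the soundness of the witnessing. So all such circuits encode the true computation of $M_2$ on $x$, and $x$ is accepted iff $Q'_p$ holds. All queries are non-adaptive, so $\PSPACE\subseteq\PTIME^{\NP}_{\mathrm{tt}}$, contradicting the hypothesis.

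The delicate points I would check carefully are two. First, that the witnessing theorem indeed yields correctness in $\N$ for every choice of valid witnesses, not just for a canonical one. Second, that $O(\log n)$ queries suffice uniformly for all $n$, so that the pattern enumeration stays polynomial.
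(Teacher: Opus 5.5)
\textbf{Parts (b) and (c).} Your argument is the paper's. Compactness gives $\S^2_2\vdash\meyer^c_{M_2}$ (resp.\ $\meyer^c_{M_1}$), which is $\forall\Sigma^b_2$. Buss' witnessing theorem then yields an $\mathrm{FP}^{\NP}$ circuit generator, and acceptance is read off the final state.

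\textbf{Part (a).} The paper also starts from Kraj\'i\v{c}ek's $\mathrm{FP}^{\NP}[\mathrm{wit},O(\log n)]$ witnessing. It notes that the accept/reject bit does not depend on the witnesses, since all valid outputs encode the same deterministic computation. It then simply cites Kraj\'i\v{c}ek's book: first to reduce to $O(\log n)$ Boolean queries, then to reduce to non-adaptive queries. You instead do the conversion directly in one non-adaptive round, asking all $Q_p,Q'_p$ in parallel. This is the lexicographic-maximum idea behind those citations and is self-contained. However, your justification of the key step is wrong as written.

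\textbf{The gap.} With witness queries there is no single ``true run'' or ``true pattern''. The $i$-th query depends on the witnesses returned earlier, not only on the yes/no bits. So two runs whose patterns agree below $i$ may ask different queries at $i$. Hence your claim that a consistent pattern can only deviate from the truth by turning a $1$ into a $0$ fails. For example, suppose the first query has witnesses $w_a,w_b$, and the second query has no witness after $w_a$ but has one after $w_b$. Then $(1,0)$ is the pattern of a genuine run, yet $(1,1)$ is consistent and lexicographically larger.

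\textbf{The fix.} What is true, and what you actually need, is this. Let $p^*$ be the lexicographically greatest $p$ with $Q_p$ true; it exists because the all-zero pattern is trivially consistent. Then for \emph{every} witness sequence validating $p^*$, each ``no'' in the guided run is truthful. Suppose instead that at some position $i$ with $p^*_i=0$, the query determined by those witnesses had a witness $w$. Answering it with $w$ and continuing with genuine oracle answers gives a consistent pattern. That pattern agrees with $p^*$ below $i$ and has a $1$ at $i$, contradicting maximality. Consequently:
\begin{itemize}
\item every run validating $p^*$ is a genuine oracle run;
\item its output satisfies $\theta$ by the witnessing theorem;
\item all such outputs encode the unique computation of $M_2$ on $x$;
\item so $Q'_{p^*}$ decides $x$ correctly.
\end{itemize}
With this argument in place of your deviation argument, your proof of (a) goes through and gives $\PSPACE\subseteq\PTIME^{\NP}_{\mathrm{tt}}$ directly.
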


\begin{proof} We start with (b):
  If~$\S^2_2 \cup \q{$\PSPACE\not\subseteq\Ppoly$}$ is inconsistent, then,
  by compactness,~$\S^2_2\vdash\meyer^c_{M_2}$ for
  some~$c\in\N$. Thus,~$\S^2_2$ proves the
  following~$\Sigma_2^b$-formula with variable~$y$: 
  \begin{equation*}\label{eq:etaformula}
|y|>1\to \begin{array}{l}
\exists C{<}2^{|y|^c}\ \forall N,x,t{<}2^{|y|}\ 
 \q{$C_{\langle N,x,t\rangle}(\cdot)$ is a halting computation 
of $M_2$ on $\langle N,x,t\rangle$}.
\end{array}
\end{equation*} 
By Buss' witnessing theorem \cite{bussthesis} for~$\S^2_2$, there is a
function~$f$ computable in polynomial time with an~$\NP$-oracle such
that the set~$f(y)_{\langle N,x,t\rangle}(\cdot)$, i.e., the
set~$\eval(f(y),\langle N,x,t\rangle,\cdot)$, is a halting computation of~$M_2$
on~$\langle N,x,t \rangle$. This implies that the~$\PSPACE$-complete
problem decided by~$M_2$ is in~$\PTIME^{\NP}$.

(c): Proved analogously.

(a): If~$\S^1_2 \cup \q{$\PSPACE\not\subseteq\Ppoly$}$ is inconsistent,
then, by compactness,~$\S^1_2\vdash\meyer^c_{M_2}$ for some~$c\in\N$.
By \cite[Theorem 7.3.5]{krabuch} there is a polynomial time algorithm
that given~$n>1$ in unary and~$N,x,t<2^n$ asks~$O(\log n)$ {\em
  witness queries} (see~\cite[Definition 6.3.6]{krabuch}) to an~$\NP$
oracle and outputs a circuit~$C$ such that~$C_{N,x,t}(\cdot)$ is a
halting computation of~$M_2$ on~$x$. The output~$C$ depends on the
answers to the witness queries but, since~$M_2$ is deterministic, all
outputs~$C$ encode the same computation.  Modify this algorithm to
output 1 or~0 depending on whether this computation is accepting or
rejecting.  By \cite[Corollary 6.3.5]{krabuch} this implies that
the~$\PSPACE$-complete problem decided by~$M_2$ is decidable in
polynomial time with~$O(\log n)$ Boolean queries to an~$\NP$
oracle. Instead, one can also use~$n^{O(1)}$ many non-adaptive queries
(see \cite[Theorem 6.2.3]{krabuch}).
\end{proof}

\begin{remark} 
  The analogue of Proposition~\ref{prop:conditional} for higher
  levels~$\S^i_2$ is void: if~$\PSPACE\not\subseteq\NP^\NP$, then the
  theory \q{$\PSPACE\not\subseteq\Ppoly$} is true
  \cite[Theorem~4.2]{kl}, so consistent with any sound theory; same
  for~$\EXP$ by Meyer's argument.
\end{remark}

To infer the consistency of~$\S^1_2\cup \q{$\EXP
    \not\subseteq \Ppoly$}$ from a witnessing argument along the lines
  of Proposition~\ref{prop:conditional} we would need the
  separation~$\EXP \not\subseteq \PTIME^{\NP}_{\mathrm{tt}}$, which is
  open. It is remarkable that the G\"odel-type argument that underlies
  our proof of consistency of~$\S^1_2 \cup \q{$\EXP \not\subseteq
    \Ppoly$}$ is able bypass this obstacle. We find this encouraging.

\bigskip\noindent\textbf{Acknowledgements} First author was partially
supported by grant PID2022-138506NB- C22 (PROOFS BEYOND) and the
Severo Ochoa and María de Maeztu Program for Centers and Units of
Excellence in R\&D (CEX2020-001084-M) of the AEI, and the CERCA and
ICREA Academia Programmes of the Generalitat de Catalunya.

\end{document}